\newtheorem{lem}{Lemma}[section]
\newtheorem{cor}[lem]{Corollary}
\newtheorem{prop}[lem]{Proposition}
\newtheorem{thm}[lem]{Theorem}
\newtheorem{Defn}[lem]{Definition}
\newtheorem{Ex}[lem]{Example}
\newtheorem{Question}[lem]{Question}
\newtheorem{Questions}[lem]{Questions}
\newtheorem{Property}[lem]{Property}
\newtheorem{Properties}[lem]{Properties}
\newtheorem{Discussion}[lem]{Remark}
\newtheorem{Construction}[lem]{Construction}
\newtheorem{Notation}[lem]{Notation}
\newtheorem{Fact}[lem]{Fact}
\newtheorem{Notationdefinition}[lem]{Definition/Notation}
\newtheorem{Remarkdefinition}[lem]{Remark/Definition}
\newtheorem{Subprops}{}[lem]
\newtheorem{Para}[lem]{}
\newenvironment{defn}{\begin{Defn}\rm}{\end{Defn}}
\newenvironment{question}{\begin{Question}\rm}{\end{Question}}
\newenvironment{questions}{\begin{Questions}\rm}{\end{Questions}}
\newenvironment{properties}{\begin{Properties}\rm}{\end{Properties}}
\newenvironment{notationdefinition}{\begin{Notationdefinition}\rm}{\end{Notationdefinition}}
\newenvironment{subprops}{\begin{Subprops}\rm}{\end{Subprops}}
\newenvironment{para}{\begin{Para}\rm}{\end{Para}}
\newenvironment{disc}{\begin{Discussion}\rm}{\end{Discussion}}
\newtheorem{intthm}{Theorem}
\newcommand{\D}{\mathcal{D}}
\newcommand{\cat}[1]{\mathcal{#1}}
\newcommand{\catp}{\cat{P}}
\newcommand{\catf}{\cat{F}}
\newcommand{\cati}{\cat{I}}
\newcommand{\cata}{\cat{A}}
\newcommand{\catb}{\cat{B}}
\newcommand{\catac}{\cat{A}_C}
\newcommand{\catab}{\cat{A}_B}
\newcommand{\catbc}{\cat{B}_C}
\newcommand{\catcif}{\operatorname{CI}\text{-}\catf}
\newcommand{\catucii}{\operatorname{CI}^*\!\!\text{-}\cati}
\newcommand{\pd}{\operatorname{pd}}	
\newcommand{\gdim}{\mathrm{G}\text{-}\!\dim}	
\newcommand{\gkdim}[1]{\mathrm{G}_{#1}\text{-}\!\dim}	
\newcommand{\gcdim}{\gkdim{C}}	
\newcommand{\id}{\operatorname{id}}	
\newcommand{\fd}{\operatorname{fd}}
\newcommand{\gpd}{\text{G-}\!\pd}
\newcommand{\gfd}{\text{G-}\!\fd}
\newcommand{\gcpd}{\text{G}_C\text{-}\!\pd}
\newcommand{\gcfd}{\text{G}_C\text{-}\!\fd}
\newcommand{\gcid}{\text{G}_C\text{-}\!\id}
\newcommand{\gkpd}[1]{\text{G}_{#1}\text{-}\!\pd}
\newcommand{\gkfd}[1]{\text{G}_{#1}\text{-}\!\fd}
\newcommand{\gid}{\text{G-}\!\id}
\newcommand{\cidim}{\mathrm{CI}\text{-}\!\dim}	
\newcommand{\cipd}{\mathrm{CI}\text{-}\!\pd}	
\newcommand{\cifd}{\mathrm{CI}\text{-}\!\fd}	
\newcommand{\ciid}{\mathrm{CI}\text{-}\!\id}	
\newcommand{\uciid}{\mathrm{CI}^*\!\text{-}\!\id}
\newcommand{\depth}{\operatorname{depth}}	
\newcommand{\amp}{\operatorname{amp}}
\newcommand{\edim}{\operatorname{edim}}
\newcommand{\ext}{\operatorname{Ext}}	
\newcommand{\rhom}{\mathbf{R}\!\operatorname{Hom}}	
\newcommand{\lotimes}{\otimes^{\mathbf{L}}}
\newcommand{\HH}{\operatorname{H}}
\newcommand{\Hom}{\operatorname{Hom}}	
\newcommand{\coker}{\operatorname{Coker}}
\newcommand{\spec}{\operatorname{Spec}}
\newcommand{\tor}{\operatorname{Tor}}
\newcommand{\shift}{\mathsf{\Sigma}}
\newcommand{\Ker}{\operatorname{Ker}}
\newcommand{\ideal}[1]{\mathfrak{#1}}
\newcommand{\m}{\ideal{m}}
\newcommand{\n}{\ideal{n}}
\newcommand{\p}{\ideal{p}}
\newcommand{\q}{\ideal{q}}
\newcommand{\comp}[1]{\widehat{#1}}
\newcommand{\Min}{\operatorname{Min}}
\newcommand{\from}{\leftarrow}
\newcommand{\xra}{\xrightarrow}
\newcommand{\xla}{\xleftarrow}
\newcommand{\vf}{\varphi}
\newcommand{\x}{\mathbf{x}}
\renewcommand{\geq}{\geqslant}
\renewcommand{\leq}{\leqslant}
\begin{document}

\bibliographystyle{amsplain}

\author{Sean Sather-Wagstaff}

\address{Sean Sather-Wagstaff, Department of Mathematics,
300 Minard Hall,
North Dakota State University,
Fargo, North Dakota 58105-5075, USA}

\email{Sean.Sather-Wagstaff@ndsu.edu}

\urladdr{http://math.ndsu.nodak.edu/faculty/ssatherw/}

\title{Complete intersection dimensions and Foxby classes}

\date{\today}

\dedicatory{Dedicated to Luchezar L.\ Avramov on the occasion of his sixtieth birthday}

\keywords{Auslander classes, Bass classes, complete intersection dimensions,
Foxby classes, Foxby equivalence, quasi-deformations, semidualizing complexes}
\subjclass[2000]{13A35, 13B10, 13C05, 13D05, 13D07, 13D25, 14B25}

\begin{abstract}
Let $R$ be a local ring and $M$ a finitely generated $R$-module.
The complete intersection dimension of $M$---defined by Avramov, Gasharov and Peeva,
and denoted $\cidim_R(M)$---is a homological invariant
whose finiteness implies that $M$ is similar to 
a module over a complete intersection. 
It is related to the classical projective dimension
and to Auslander and Bridger's Gorenstein dimension 
by the inequalities
$\gdim_R(N)\leq\cidim_R(N)\leq\pd_R(N)$.

Using Blanco and Majadas' version of complete intersection dimension
for local ring homomorphisms, 
we prove the following generalization of a theorem
of Avramov and Foxby:  
Given local ring homomorphisms
$\varphi\colon R\to S$ and 
$\psi\colon S\to T$ such that $\varphi$
has finite Gorenstein dimension, if $\psi$ has finite complete intersection
dimension, then the composition $\psi\circ\varphi$ has finite Gorenstein dimension.
This follows from our result stating that, if $M$ has finite complete
intersection dimension, then
$M$ is $C$-reflexive and is in the Auslander class $\catac(R)$
for each semidualizing $R$-complex $C$.  
\end{abstract}

\maketitle

\section*{Introduction} \label{sec0}

Let $R$ be a local ring and $N$ a finitely generated $R$-module. 
The projective dimension of $N$, denoted $\pd_R(N)$, is by now a classical invariant,
and much research has shown that modules of 
finite projective dimension have properties similar to 
those of modules over a regular local ring.
Motivated by this,
Auslander and Bridger~\cite{auslander:smt}
introduced the Gorenstein dimension of $N$,
denoted $\gdim_R(N)$, which is 
an invariant whose finiteness detects properties similar to those for
modules over a Gorenstein ring. 
More recently Avramov, Gasharov and Peeva~\cite{avramov:cid}
defined the complete intersection dimension of $N$,
denoted $\cidim_R(N)$, which plays a similar role with respect to
the complete intersection property.
Corresponding to the well-known hierarchy of rings, there are
inequalities
$$\gdim_R(N)\leq\cidim_R(N)\leq\pd_R(N)$$
with equality to the left of any finite quantity.
See Sections~\ref{sec1} and~\ref{sec2} for foundations of
Gorenstein dimensions and complete intersection
dimensions.

Avramov and Foxby~\cite{avramov:rhafgd} used Auslander and Bridger's
Gorenstein dimension to define what it means for  a local ring homomorphism
$\vf\colon R\to S$ to have finite Gorenstein dimension.  
Note that one cannot simply define the Gorenstein dimension of $\vf$ to be $\gdim_R(S)$,
as $S$ is not assumed to be finitely generated as an $R$-module.
Avramov and Foxby overcome this technical difficulty by using the 
Cohen factorizations
of Avramov, Foxby and Herzog~\cite{avramov:solh}
to replace $\vf$ with a related homomorphism
$\vf'\colon R'\to\comp S$ which has the added benefit of being surjective
so that $\gdim_{R'}(\comp S)$ is defined.
Blanco and Majadas~\cite{blanco:miccp} use the same idea to
study local homomorphisms of  finite complete intersection.

Avramov and Foxby~\cite{avramov:rhafgd} asked the following:
given two local ring homomorphisms $\vf\colon R\to S$ and $\psi\colon S\to T$
of finite Gorenstein dimension, must the composition $\psi \circ\vf$
also have finite Gorenstein dimension?
They were able to answer this question, for example, when
$\psi$ has finite flat dimension.  
We move one step closer to answering this question 
in general with the following
result; see Theorem~\ref{compose01}.

\begin{intthm} \label{thmB}
Let $\vf\colon R\to S$ and $\psi\colon S\to T$ be local ring homomorphisms.
If $\vf$ has finite Gorenstein dimension and $\psi$ has finite complete intersection
dimension, then the composition $\psi \circ\vf$ has finite Gorenstein dimension.
\end{intthm}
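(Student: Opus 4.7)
The plan is to follow the template of Avramov and Foxby's proof in the special case where $\psi$ has finite flat dimension, replacing the classical input ``finite flat dimension forces membership in every Auslander class $\catac$'' with the paper's main theorem (stated in the abstract) that \emph{finite complete intersection dimension forces membership in $\catac$ for every semidualizing complex $C$.}

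First I reduce to surjections via compatible Cohen factorizations. After completing, I fix a Cohen factorization $R\to R'\xra{\vf'}\comp{S}$ of $\comp{\vf}$ with $\vf'$ surjective, and then a Cohen factorization $R'\to R''\xra{\tau'}\comp{T}$ of the composite $R'\to\comp{T}$, chosen so that $S':=R''\otimes_{R'}\comp{S}$ produces a compatible Cohen factorization $\comp{S}\to S'\to\comp{T}$ of $\comp{\psi}$. The hypotheses translate into $\gdim_{R'}(\comp{S})<\infty$ and $\cidim_{S'}(\comp{T})<\infty$, and the conclusion to establish is $\gdim_{R''}(\comp{T})<\infty$.

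Second, the finite Gorenstein dimension of the surjection $\vf'$ supplies a relative dualizing complex that is semidualizing over $\comp{S}$; base changing along the flat map $\comp{S}\to S'$ yields a semidualizing $S'$-complex $D$. The paper's main theorem, applied to $\comp{T}$ as an $S'$-module of finite CI-dimension, places $\comp{T}$ in $\catac(S')$ for every semidualizing $S'$-complex $C$, in particular for $C=D$. Standard Foxby-equivalence machinery, together with the compatibility of the Cohen factorizations and the surjectivity of $R''\to S'$, then descends this membership to an Auslander-class membership over $R''$ for the analogous semidualizing $R''$-complex, and this is precisely what characterizes finite Gorenstein dimension. This yields $\gdim_{R''}(\comp{T})<\infty$.

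The main obstacle will be the bookkeeping in the second step: verifying that the base-changed semidualizing complex $D$ correctly mediates between Auslander-class membership over $S'$ and finite Gorenstein dimension over $R''$, and that the compatibility of the Cohen factorizations is preserved throughout. Once this scaffolding is in place, the main theorem of the paper slots cleanly into the role previously played by finite flat dimension in the Avramov-Foxby argument, completing the proof of Theorem~B.
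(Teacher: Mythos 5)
Your overall plan---reduce to surjections via Cohen factorizations, invoke the paper's main theorem that finite CI-dimension implies membership in every Auslander class, and then descend to a finite-G-dimension conclusion via the Foxby-class characterization---is exactly the right shape, and indeed it is the shape of the paper's argument. But there is a genuine gap in the order in which you construct the commutative diagram of Cohen factorizations, and that gap sits precisely on the one point where G-dimension and complete intersection dimension behave differently.

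You fix a Cohen factorization $R\to R'\to\comp{S}$ of $\grave{\vf}$ first, then choose a Cohen factorization $R'\to R''\to\comp{T}$ of the composite $R'\to\comp{T}$, and you \emph{derive} $S'$ as $R''\otimes_{R'}\comp{S}$. The resulting $\comp{S}\to S'\to\comp{T}$ is indeed a Cohen factorization of $\comp\psi$, but $S'$ is now forced upon you; it is not freely chosen. You then assert that the hypothesis on $\psi$ translates into $\cidim_{S'}(\comp{T})<\infty$. This does not follow: the hypothesis only says that \emph{some} Cohen factorization of $\comp{\psi}$ witnesses finite CI-dimension, not that this particular one does, and whether the finiteness of $\cidim_{\psi}(-)$ is independent of the choice of Cohen factorization is recorded in the paper as an open question (Question~\ref{cidim10}). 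By contrast, the corresponding assertion $\gdim_{R'}(\comp{S})<\infty$ that you make on the other side \emph{is} justified, because Cohen-factorization independence for relative G-dimension is known (Property~\ref{sri02}). So the two hypotheses do not translate symmetrically, and your construction leans on the unavailable independence for CI-dimension.

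The paper sidesteps this by building the diagram in the opposite order: it first extracts from the hypothesis a Cohen factorization $S\to S'\to T$ of $\psi$ for which $\cidim_{S'}(M)<\infty$ actually holds, then picks any Cohen factorization $R\to R'\to S$ of $\vf$ (safe, by Cohen-factorization independence of G-dimension), and only then invokes the juxtaposition result \cite[(1.6)]{avramov:solh} to produce $R'\to R''\to S'$ with $S'\cong R''\otimes_{R'}S$. With that order the Auslander-class argument you describe goes through cleanly; with your order, the key finiteness over $S'$ is unverified. The fix is simply to reverse the order of construction so that $S'$ is the one handed to you by the CI-dimension hypothesis rather than one manufactured after the fact.
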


We also establish the following complete-intersection analogues
of results of Avramov, Iyengar and Miller~\cite{avramov:holh}
and Foxby and Frankild~\cite{foxby:cmfgidgr}.
The first is proved in~\ref{ciid01}. 
The second one is contained in Corollary~\ref{ciid03} and can also be thought of as an
injective version of a result of Blanco and Majadas~\cite{blanco:miccp}.

\begin{intthm} \label{thmD}
Let $\vf\colon R\to S$ be a local ring homomorphism.
Then $\vf$ has finite complete intersection injective
dimension
if and only if $R$ is Gorenstein
and $\vf$ has finite complete intersection
dimension.
\end{intthm}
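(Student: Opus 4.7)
The plan is to reduce both directions, via a Cohen factorization of the semicompletion, to the corresponding module-level equivalence: for a finitely generated module $M$ over a local ring $R$, $\ciid_R(M) < \infty$ if and only if $R$ is Gorenstein and $\cidim_R(M) < \infty$. Concretely, I fix a Cohen factorization $R \xrightarrow{\dot{\vf}} R' \xrightarrow{\vf'} \comp{S}$ of the semicompletion of $\vf$, with $\dot{\vf}$ flat having regular closed fiber and $\vf'$ surjective, so that $\ciid \vf$ and $\cidim \vf$ are computed (up to a common normalization constant involving $\edim$ of the closed fiber) from $\ciid_{R'}(\comp{S})$ and $\cidim_{R'}(\comp{S})$, respectively.

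For the forward direction, assume $\ciid \vf < \infty$. Since the Gorenstein injective dimension is always bounded above by the complete intersection injective dimension, $\gid \vf < \infty$, and the theorem of Avramov--Iyengar--Miller and Foxby--Frankild then forces $R$ to be Gorenstein. For the remaining claim $\cidim \vf < \infty$, observe that $R'$ inherits Gorensteinness from $R$ through the flat extension $\dot{\vf}$ (whose closed fiber is regular, hence Gorenstein). By definition of $\ciid_{R'}(\comp{S})$ there is a quasi-deformation $R' \to R'' \leftarrow Q$ with $\id_Q(R'' \otimes_{R'} \comp{S}) < \infty$. After refining so that $R' \to R''$ also has regular closed fiber, $R''$ is Gorenstein, and so is $Q$ since $Q \to R''$ is a deformation and deformations preserve Gorensteinness. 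Over a Gorenstein local ring a finitely generated module has finite injective dimension if and only if it has finite projective dimension, so $\pd_Q(R'' \otimes_{R'} \comp{S}) < \infty$, yielding $\cidim_{R'}(\comp{S}) < \infty$ and hence $\cidim \vf < \infty$.

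The backward direction runs in parallel: given $R$ Gorenstein and $\cidim \vf < \infty$, take a Cohen factorization and a quasi-deformation witnessing finiteness of $\cidim_{R'}(\comp{S})$. After the same refinement so that $Q$ is Gorenstein, the classical equivalence over Gorenstein local rings converts finite projective dimension to finite injective dimension, producing $\ciid_{R'}(\comp{S}) < \infty$, and hence $\ciid \vf < \infty$.

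The main obstacle is the refinement step: given a quasi-deformation such that some homological finiteness holds over $Q$, one must produce a possibly different quasi-deformation in which the flat part has regular closed fiber, so that Gorensteinness propagates all the way from $R$ to $Q$, without destroying the relevant finiteness. This is a standard maneuver in the theory of complete intersection dimensions (for instance, composing with an additional flat extension having regular closed fiber, reminiscent of Cohen factorizations), and once it is in place, the classical equivalence of finite projective and finite injective dimension for finitely generated modules over a Gorenstein local ring completes the proof.
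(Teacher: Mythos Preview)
Your backward direction is essentially fine (with the caveat that the available refinement---Theorem~\ref{thmF}---produces a \emph{Gorenstein} closed fibre, not a regular one; but Gorenstein is all you need to push the Gorenstein property from $R'$ to $R''$ and then to $Q$). The paper argues this direction differently, invoking Corollary~\ref{foxby04} (a consequence of the Foxby-equivalence Theorem~\ref{thmC}) to pass directly from $\cifd_{R'}(\comp S)<\infty$ to $\uciid_{R'}(\comp S)<\infty$ over the Gorenstein ring $R'$; your route via refinement is a legitimate alternative.

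The forward direction, however, has a genuine gap. You write: ``After refining so that $R'\to R''$ also has regular closed fiber\ldots''---but no such refinement is available on the injective side. Theorem~\ref{thmF} and Lemma~\ref{qd01} work for $\cifd$ because flat dimension behaves well under the base changes involved (localization and the passage $Q\to Q'$); the paper explicitly records in Remark~\ref{qd03} that it is \emph{not known} whether one can refine a quasi-deformation witnessing finite $\ciid$ even to make $Q$ complete, let alone to control the closed fibre. Without this refinement you cannot conclude that $R''$ (and hence $Q$) is Gorenstein from the Gorensteinness of $R'$, and the conversion $\id_Q<\infty\Rightarrow\pd_Q<\infty$ collapses.

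The paper's fix is elegant and avoids refinement entirely: since $\vf'\colon R'\to\comp S$ is surjective, so is $R''\to R''\otimes_{R'}\comp S$, and hence the composite $Q\to R''\otimes_{R'}\comp S$ is a \emph{surjective} local homomorphism with target of finite injective dimension over $Q$. Peskine--Szpiro then gives $Q$ Gorenstein outright---no need to know anything about $R$, $R'$, or the closed fibre beforehand. From there $R''$ is Gorenstein (deformation), $R$ is Gorenstein (faithfully flat descent along $R\to R''$), and Levin--Vasconcelos converts $\id_Q<\infty$ to $\pd_Q<\infty$. Your initial step deducing $R$ Gorenstein via $\gid(\vf)<\infty$ is then unnecessary.
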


\begin{intthm} \label{thmE}
A local ring  of prime characteristic is a complete
intersection if and only if some (equivalently, every) power of its Frobenius 
endomorphism has finite complete intersection injective dimension.
\end{intthm}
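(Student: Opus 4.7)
The plan is to deduce Theorem~\ref{thmE} by combining Theorem~\ref{thmD} with the complete intersection dimension analogue of the statement, which is due to Blanco and Majadas~\cite{blanco:miccp}: a local ring $R$ of prime characteristic is a complete intersection if and only if some (equivalently, every) power of its Frobenius endomorphism has finite complete intersection dimension. This is the natural CI-dimension counterpart of Kunz's flatness theorem and of the corresponding Gorenstein-dimension result for Frobenius.

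For the forward direction, suppose $R$ is a complete intersection. Then $R$ is Gorenstein, and the Blanco--Majadas Frobenius theorem gives that every power $\varphi^n$ of the Frobenius endomorphism $\varphi\colon R\to R$ has finite complete intersection dimension. Applying Theorem~\ref{thmD} to $\varphi^n$ then yields that $\varphi^n$ has finite complete intersection injective dimension, for every $n$.

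For the converse, assume that some power $\varphi^n$ has finite complete intersection injective dimension. Theorem~\ref{thmD} applied to $\varphi^n$ shows simultaneously that $R$ is Gorenstein and that $\varphi^n$ has finite complete intersection dimension. The Blanco--Majadas Frobenius characterization then forces $R$ to be a complete intersection; note that in this direction the ``$R$ is Gorenstein'' output of Theorem~\ref{thmD} is redundant, as the complete intersection property already supplies it.

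The main obstacle is merely bookkeeping: one needs to ensure that the form of the Blanco--Majadas Frobenius theorem being invoked delivers the ``some power iff every power'' equivalence for complete intersection dimension, so that it can be propagated through Theorem~\ref{thmD} to the injective side. Assuming this equivalence, the argument is a direct synthesis of the two ingredients, with Theorem~\ref{thmD} doing the essential work of translating between the projective-type invariant $\cidim$ and its injective counterpart $\ciid$.
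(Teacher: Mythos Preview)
Your proposal is correct and follows essentially the same strategy as the paper: reduce the $\ciid$ statement to the known $\cidim$ Frobenius characterization via Theorem~\ref{thmD}. Two minor differences are worth noting. First, for the forward implication the paper does not route through Theorem~\ref{thmD}; it invokes directly that when $R$ is a complete intersection every $\ciid_{\vf}(M)$ is finite (property~\eqref{cidim08}), which is slightly cleaner than your detour through Gorensteinness plus Blanco--Majadas plus Theorem~\ref{thmD}. Second, for the backward implication the paper cites the more general contraction result~\cite[(13.5)]{avramov:holh} of Avramov--Iyengar--Miller rather than Blanco--Majadas, and in fact states and proves the whole theorem for arbitrary contractions (Corollary~\ref{ciid03}), obtaining the Frobenius case as a specialization. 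Your version is adequate for Theorem~\ref{thmE} as stated.
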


Theorem~\ref{thmB} is proved using the 
Auslander classes introduced 
by Avramov and Foxby~\cite{avramov:rhafgd} and generalized
by Christensen~\cite{christensen:scatac}; see~\ref{sdc05}.  
Much recent research has been devoted to the study of these classes,
not only because of their connection to the composition question of Avramov and Foxby,
but also because the objects in these classes enjoy particularly nice
homological properties.  

Each Auslander class
of $R$-complexes contains every
bounded $R$-module of finite flat dimension.  The next result 
greatly enlarges the class of objects known to be in
each Auslander class; it is contained in Theorem~\ref{cifdac01}.

\begin{intthm} \label{thmA}
If $R$ is a local ring and $M$ an $R$-module of 
finite complete intersection flat dimension, then $M$ is in the Auslander class
$\catac(R)$ for each semidualizing $R$-complex $C$.
\end{intthm}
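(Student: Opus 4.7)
The plan is to reduce the problem to a quasi-deformation and then invoke base change for Auslander classes. Fix a semidualizing $R$-complex $C$ and an $R$-module $M$ with $\cifd_R(M)$ finite. By definition there exists a quasi-deformation $R\to R' \twoheadleftarrow Q$, with $R\to R'$ flat local and $Q\to R'$ surjective with kernel generated by a $Q$-regular sequence, such that $\fd_Q(M\otimes_R R')$ is finite. Since $R\to R'$ is faithfully flat, membership $M \in \catac(R)$ is equivalent to $M\otimes_R R' \in \cat{A}_{C \lotimes_R R'}(R')$ by the standard flat base change result for Auslander classes; this reduces the problem to working over $R'$ with semidualizing complex $C' := C\lotimes_R R'$.

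Because $Q\to R'$ is surjective with kernel generated by a $Q$-regular sequence, Christensen's ascent/descent result for semidualizing complexes along such surjections produces a semidualizing $Q$-complex $B$ with $B \lotimes_Q R' \simeq C'$. By hypothesis $N := M\otimes_R R'$ has finite flat dimension as a $Q$-module, hence lies in $\cat{A}_B(Q)$, since every bounded complex of finite flat dimension belongs to the Auslander class of any semidualizing complex.

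It remains to transfer $N\in\cat{A}_B(Q)$ to $N\in\cat{A}_{C'}(R')$. Since $N$ carries an $R'$-structure, the functors defining the two Auslander classes agree on it: associativity gives $B\lotimes_Q N \simeq C' \lotimes_{R'} N$, which identifies the boundedness conditions, and the hom--tensor adjunction $\rhom_Q(B,X) \simeq \rhom_{R'}(B\lotimes_Q R',X) = \rhom_{R'}(C',X)$ for $R'$-complexes $X$ identifies the two reflexivity maps. Consequently $N\in\cat{A}_B(Q)$ if and only if $N\in\cat{A}_{C'}(R')$, and faithfully flat descent along $R\to R'$ gives $M\in\catac(R)$.

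The main obstacle will be the lifting step: verifying that the semidualizing $R'$-complex $C'$ lifts along $Q\to R'$ to a semidualizing $Q$-complex $B$ with $B\lotimes_Q R'\simeq C'$. This rests on the structural properties of surjections with kernel generated by a regular sequence (in particular, $R'$ is a perfect, quasi-Gorenstein $Q$-module), after which the base-change identifications above are formal and the faithfully flat descent from $R'$ to $R$ is standard.
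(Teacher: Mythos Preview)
Your overall strategy matches the paper's: pass to a quasi-deformation, lift $C'=C\lotimes_R R'$ to a semidualizing $Q$-complex $B$, use $\fd_Q(R'\lotimes_R M)<\infty$ to place $R'\lotimes_R M$ in $\cat{A}_B(Q)$, then base-change back. The identifications $\cat{A}_B(Q)\leftrightarrow\cat{A}_{C'}(R')$ for $R'$-complexes and the faithfully flat descent to $R$ are exactly what the paper uses (Remark~\ref{lwc01}).

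The genuine gap is the lifting step, and your diagnosis of what it ``rests on'' is too optimistic. Having $\Ker(Q\to R')$ generated by a $Q$-regular sequence is \emph{not} enough to lift a semidualizing $R'$-complex to $Q$. The lifting theorem being invoked (ultimately the Auslander--Ding--Solberg lifting used in the reference \cite[(4.2)]{frankild:sdcms}) requires $Q$ to be complete; one needs the $\ideal{n}$-adic topology on $Q$ to build the lift by successive approximation, and the vanishing $\ext^2_{R'}(C',C')=0$ only removes the obstruction at each step, it does not produce a global lift over a non-complete $Q$. There is no ``Christensen ascent'' result that does this for you over an arbitrary $Q$.

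This is precisely why the paper does not work with an arbitrary quasi-deformation. It first applies Theorem~\ref{thmF} to replace the given quasi-deformation by one $R\to R'\leftarrow Q$ with $Q$ complete (and the closed fibre artinian and Gorenstein). Only then is the lifting of $C'$ to a semidualizing $Q$-complex $B$ available, after which your remaining steps go through verbatim. So your argument becomes correct once you insert the reduction to complete $Q$ via Theorem~\ref{thmF}; without it the lifting step is unjustified.
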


Here, the complete intersection flat dimension of $M$ is a 
version of complete intersection dimension for modules that are not 
necessarily finitely generated.
We actually prove 
a more general result for $R$-complexes
and also a dual result  in terms of upper complete intersection injective dimension 
$\uciid_R(M)$ and
the Bass classes $\catbc(R)$.  In particular, when $R$ admits a dualizing complex $D^R$,
these complete intersection dimensions determine natural subcategories
$\catcif(R)\subseteq \cata_{D^R}(R)$ and $\catucii(R)\subseteq\catb_{D^R}(R)$.
The next result shows that ``Foxby equivalence'' between the categories
$\cata_{D^R}(R)$ and  $\catb_{D^R}(R)$ 
restricts to an equivalence of categories
$\catcif(R)\sim \catucii(R)$; see~\ref{foxby01} for the proof.

\begin{intthm} \label{thmC}
Let $R$ be a local ring admitting a dualizing complex $D^R$,
and let $M$ be a homologically bounded $R$-complex.
\begin{enumerate}[\quad\rm(a)]
\item \label{thmCitem1}
$\cifd_R(M)<\infty$ if and only if $\uciid_R(D^R\lotimes_RM)<\infty$.
\item \label{thmCitem2}
$\uciid_R(M)<\infty$ if and only if $\cifd_R(\rhom_R(D^R,M))<\infty$.
\end{enumerate}
\end{intthm}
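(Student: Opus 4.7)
The plan is to combine Foxby equivalence between the Auslander class $\cata_{D^R}(R)$ and the Bass class $\catb_{D^R}(R)$ with a direct transfer of quasi-deformations through the dualizing functors $D^R\lotimes_R-$ and $\rhom_R(D^R,-)$. By Theorem~\ref{thmA}, every homologically bounded $M$ with $\cifd_R(M)<\infty$ lies in $\cata_{D^R}(R)$; I would first establish the dual statement that every homologically bounded $N$ with $\uciid_R(N)<\infty$ lies in $\catb_{D^R}(R)$. Given this, for any such $M$ the unit $M\to\rhom_R(D^R,D^R\lotimes_R M)$ is a quasi-isomorphism, and for any such $N$ the counit $D^R\lotimes_R\rhom_R(D^R,N)\to N$ is a quasi-isomorphism. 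It therefore suffices to prove the two forward implications
\begin{equation*}
\cifd_R(M)<\infty\implies\uciid_R(D^R\lotimes_R M)<\infty
\end{equation*}
and
\begin{equation*}
\uciid_R(M)<\infty\implies\cifd_R(\rhom_R(D^R,M))<\infty;
\end{equation*}
the reverse of each biconditional then follows by applying the opposite forward implication to $D^R\lotimes_R M$ or to $\rhom_R(D^R,M)$ and using the Foxby identifications above.

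For the forward direction of (a) I would choose a quasi-deformation $R\xra{\rho}R'\xla{\tau}Q$ witnessing $\cifd_R(M)<\infty$, so that $\fd_Q(R'\lotimes_R M)<\infty$. Flatness of $\rho$ makes $D^{R'}:=R'\lotimes_R D^R$ a dualizing complex for $R'$, and since $\tau$ is surjective with kernel generated by a $Q$-regular sequence, Grothendieck duality supplies an integer $n$ and a dualizing complex $D^Q$ for $Q$ with $D^{R'}\simeq\shift^n\rhom_Q(R',D^Q)$. Flat base change yields
\begin{equation*}
R'\lotimes_R(D^R\lotimes_R M)\simeq D^{R'}\lotimes_{R'}(R'\lotimes_R M),
\end{equation*}
and tensor-evaluation together with the identification above converts the right-hand side to $\shift^n\rhom_Q(R',D^Q\lotimes_Q(R'\lotimes_R M))$. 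Since $R'\lotimes_R M$ has finite flat dimension over $Q$, the complex $D^Q\lotimes_Q(R'\lotimes_R M)$ has finite injective dimension over $Q$; finite projective dimension of $R'$ over $Q$ then preserves this finiteness under $\rhom_Q(R',-)$. Hence the same quasi-deformation witnesses $\uciid_R(D^R\lotimes_R M)<\infty$. The forward direction of (b) is proved by the symmetric argument, starting from a quasi-deformation witnessing $\uciid_R(M)<\infty$ and interchanging the roles of tensor and $\rhom$.

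The main obstacle I expect is establishing the dual of Theorem~\ref{thmA} for $\uciid$ and $\catbc$ with the correct boundedness hypotheses; this is the injective analogue of the finite-complete-intersection-flat-dimension statement in Theorem~\ref{thmA}, and it governs whether the Foxby unit and counit can be safely applied. A secondary hurdle is verifying that the tensor-evaluation morphism used above is a quasi-isomorphism in the present derived-category setting, which should follow from finite projective dimension of $R'$ over $Q$ together with the boundedness of $D^Q$ and $R'\lotimes_R M$, along with careful bookkeeping of the shift $n$ in $\rhom_Q(R',D^Q)\simeq\shift^{-n}D^{R'}$.
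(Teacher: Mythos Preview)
Your overall architecture matches the paper's: prove the two forward implications directly on a witnessing quasi-deformation, and deduce the converses from Foxby equivalence via membership in $\cata_{D^R}(R)$ and $\catb_{D^R}(R)$. The dual of Theorem~\ref{thmA} that you anticipate needing is indeed established in the paper as Theorem~\ref{cifdac01'}\eqref{cifdac01'item2}.

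However, there is a genuine gap in the forward argument for (a). You write that ``flatness of $\rho$ makes $D^{R'}:=R'\lotimes_R D^R$ a dualizing complex for $R'$'', but this is false for a general flat local map: by Remark~\ref{sdc02} one needs $\rho$ to be Gorenstein (e.g.\ flat with Gorenstein closed fibre), and an arbitrary quasi-deformation witnessing $\cifd_R(M)<\infty$ carries no such hypothesis on the closed fibre. Likewise, your appeal to ``Grothendieck duality supplies \ldots\ a dualizing complex $D^Q$ for $Q$'' is unjustified: nothing about a general $Q$ in a quasi-deformation forces it to admit a dualizing complex, and a dualizing complex on the quotient $R'$ does not lift to $Q$ for free. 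Finally, even if the injective-dimension bound went through, the definition of $\uciid$ requires the witnessing quasi-deformation to have $R'$ with Gorenstein formal fibres and Gorenstein closed fibre $R'/\m R'$; an arbitrary quasi-deformation does not meet this, so ``the same quasi-deformation witnesses $\uciid_R(D^R\lotimes_R M)<\infty$'' is not a valid conclusion.

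The paper closes exactly this gap by invoking Theorem~\ref{thmF} (resp.\ Proposition~\ref{qd04} for part (b)) at the outset to replace the given quasi-deformation by one $R\to R'\leftarrow Q$ with $Q$ complete and $R'/\m R'$ Gorenstein. Then $R'\lotimes_R D^R$ \emph{is} dualizing for $R'$, $Q$ \emph{does} admit a dualizing complex $D^Q$, and after a shift one has $R'\lotimes_Q D^Q\simeq R'\lotimes_R D^R$. With this identification the computation collapses to
\[
D^Q\lotimes_Q(R'\lotimes_R M)\simeq(R'\lotimes_Q D^Q)\lotimes_R M\simeq(R'\lotimes_R D^R)\lotimes_R M\simeq R'\lotimes_R(D^R\lotimes_R M),
\]
avoiding the $\rhom_Q(R',D^Q)$ description and the attendant tensor-evaluation bookkeeping you flagged as a secondary hurdle. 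Thus Theorem~\ref{thmF} is not an optional refinement but the essential input that makes every step of your sketch legitimate; you should make this the first move in each forward implication.
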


A  special case of this theorem
augments a result of Levin and Vasconcelos
and is 
in Corollary~\ref{foxby04}:  When $R$ is Gorenstein, one has
$\cifd_R(M)<\infty$ if and only if $\uciid_R(M)<\infty$.
Other results of this type are proved in Section~\ref{sec6}.

By definition, if $M$ is an $R$-complex of finite complete intersection flat dimension,
then there exists a ``quasi-deformation'' $R\xra{\vf}R'\xla{\tau}Q$ such that
$\fd_Q(R'\lotimes_R M)$ is finite;  see~\ref{cidim01} and~\ref{cidim03}.
It is commonly known that one can exert a small amount of control on
the structure of this quasi-deformation.  For instance, one may assume
without loss of generality that the closed fibre
of $\vf$ is artinian (hence, Cohen-Macaulay) and that $Q$ is complete.
One piece of technology that allows for more flexibility in our analysis
is the following result, proved in~\ref{qd02}.

\begin{intthm} \label{thmF}
Let $(R,\m)$ be a local ring and let $M$ be a homologically bounded $R$-complex.
Then $\cifd_R(M)<\infty$ if and only if there exists a quasi-deformation
$R\to R''\from Q'$ such that $Q'$ is complete, the closed fibre $R''/\m R''$ is artinian
and Gorenstein, and $\fd_{Q'}(R''\lotimes_RM)$ is finite.
\end{intthm}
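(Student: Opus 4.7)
The ``if'' direction is immediate: any quasi-deformation satisfying the listed properties witnesses that $\cifd_R(M)<\infty$. For the converse, I would start from an arbitrary quasi-deformation $R\xra{\vf}R'\xla{\tau}Q$ realizing the finiteness of $\cifd_R(M)$, so $\tau$ is surjective with kernel generated by a $Q$-regular sequence $\underline x$ and $\fd_Q(R'\lotimes_RM)<\infty$, and then modify it in two stages, each preserving finite flat dimension via flat base change on the $Q$-side.

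First, I would complete $Q$: set $Q^*=\comp Q$ and $R^*=R'\otimes_QQ^*$. Because $R'$ is finite over $Q$ via $\tau$, $R^*$ is the $\m_{R'}$-adic completion of $R'$; by faithful flatness of $Q\to Q^*$ the image of $\underline x$ remains $Q^*$-regular, $Q^*\onto R^*$ is surjective with this kernel, and $R\to R'\to R^*$ is flat local. Flat base change gives
\[
\fd_{Q^*}(R^*\lotimes_RM)=\fd_{Q^*}\bigl(Q^*\otimes_Q(R'\lotimes_RM)\bigr)\leq\fd_Q(R'\lotimes_RM)<\infty,
\]
so one may assume that $Q$ (and hence $R'$) is complete.

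Next, I would adjust the quasi-deformation to arrange an artinian and Gorenstein closed fibre. Let $\bar R=R'/\m R'$, a complete local ring with residue field $k'=R'/\m'$. Using Cohen's structure theorem to present $\bar R$ as a quotient of $k'[[Z_1,\ldots,Z_n]]$, I would take $Q':=Q[[T_1,\ldots,T_n]]$ (a flat local extension of $Q$), set $R'':=R'\otimes_QQ'$, and augment $\underline x$ by further elements of $Q'$ chosen so that their images in the closed fibre $\bar R[[T_1,\ldots,T_n]]$ of $R\to R''$ cut it down to an artinian complete intersection over $k'$ (hence artinian Gorenstein). The standard lifting principle for regular sequences across flat local extensions then guarantees that the augmented sequence is $Q'$-regular, that the resulting $R''$ is flat over $R$ with artinian Gorenstein closed fibre, and that $R\to R''\from Q'$ is a genuine quasi-deformation. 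Flat base change along $Q\to Q'$ again preserves the finite flat dimension of $R''\lotimes_RM$ over $Q'$.

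The main technical hurdle is this second stage: engineering the supplementary regular elements so that the quotient closed fibre is presented as a complete intersection---this is where the Gorenstein conclusion comes from---while simultaneously ensuring the augmented sequence is $Q'$-regular. The original closed fibre $\bar R$ need not be Cohen--Macaulay or Gorenstein, so the construction cannot simply kill a system of parameters of $\bar R$; instead, the adjoined indeterminates $T_i$ provide the slack needed to exhibit the new closed fibre as a complete intersection over $k'$ by matching the Cohen presentation of $\bar R$. The completion step and all flat-dimension bookkeeping, by contrast, are routine consequences of (faithfully) flat base change along the $Q$-side.
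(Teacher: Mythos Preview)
Your second stage has a genuine obstruction, not just a technical hurdle. You propose to obtain an artinian Gorenstein closed fibre as a quotient $\bar R[[T_1,\dots,T_n]]/(\bar y_1,\dots,\bar y_m)$ by a regular sequence, where $\bar R=R'/\m R'$. But the Gorenstein property is preserved \emph{and reflected} by quotienting by a regular element: a local ring $B$ is Gorenstein if and only if $B/yB$ is, for $y$ a nonzerodivisor. Hence $\bar R[[T]]/(\bar y)$ is Gorenstein precisely when $\bar R[[T]]$ is, i.e.\ precisely when $\bar R$ itself is Gorenstein. No choice of adjoined variables or of the regular sequence $\bar y$ can manufacture Gorensteinness that was not already present in $\bar R$. (Even the preliminary issue of making the quotient artinian already forces $\bar R$ to be Cohen--Macaulay, since you need a regular sequence of length $\dim\bar R[[T]]=\dim\bar R+n$, and $\operatorname{depth}\bar R[[T]]=\operatorname{depth}\bar R+n$; you omit the localization step that would at least arrange $\bar R$ artinian.) So ``matching the Cohen presentation'' cannot do what you want: writing $\bar R=k'[[Z]]/I$ gives slack in the regular ring $k'[[Z]]$, not in $\bar R[[T]]$, and there is no way to transport that slack through the quasi-deformation.

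The paper's route is essentially different. After localizing to make $\bar R$ artinian (hence Cohen--Macaulay), it takes a Cohen factorization $R\to S\to\comp{R'}$ and forms the \emph{relative dualizing module} $C=\ext^g_S(\comp{R'},S)$; this is a semidualizing $\comp{R'}$-module, flat over $R$, whose reduction $C/\m C$ is a canonical module for the artinian ring $\bar R$. Lifting $C$ across the surjection $\comp Q\twoheadrightarrow\comp{R'}$ to a module $B$ (possible because $\ext^2_{\comp{R'}}(C,C)=0$), one replaces the quasi-deformation by the trivial extensions $\comp{R'}\ltimes C\leftarrow\comp Q\ltimes B$. The new closed fibre is $\bar R\ltimes(C/\m C)$, which is Gorenstein exactly because $C/\m C$ is the canonical module of $\bar R$. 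This idempotent-style construction is what supplies the Gorenstein property; it cannot be replaced by adjoining polynomial variables and cutting by regular sequences.
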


Many of the results in this paper can be stated strictly in terms of modules
without losing their flavor.  However, some of our proofs \emph{require}
the use of complexes.  For this reason, we work almost entirely
in the derived-category setting.  Section~\ref{sec1} contains
a summary of the basic notions we use.

\section{Complexes and Ring Homomorphisms} \label{sec1}

Let $(R,\m,k)$ and $(S,\n,l)$ be commutative local noetherian rings.

\begin{notationdefinition} \label{basics02}
We work in the derived category $\D(R)$ of 
complexes of $R$-modules, indexed homologically.
References on the subject include~\cite{gelfand:moha, hartshorne:rad}.

A complex $M$ is \emph{homologically bounded} if $\HH_i(M)=0$ for all $|i|\gg 0$; 
and it is \emph{homologically finite} if $\oplus_i\HH_i(M)$  is finitely generated.
Let $\D_{\mathrm{b}}(R)$ denote
the full subcategory of $\D(R)$ consisting of the homologically bounded $R$-complexes.
Isomorphisms in $\D(R)$ are identified by the symbol $\simeq$,
and isomorphisms up to shift are designated by $\sim$. 

Fix $R$-complexes $M$ and $N$. Let
$\inf(M)$ and $\sup(M)$ denote the infimum and supremum, respectively, of the set
$\{n\in\mathbf{Z}\mid\HH_n(M)\neq 0\}$, and set $\amp(M)=\sup(M)-\inf(M)$.
Let $M\lotimes_R N$ and $\rhom_R(M,N)$ denote the left-derived
tensor product and right-derived homomorphism complexes, respectively.  
For each integer $i$,
the $i$th \emph{suspension} (or \emph{shift}) of
$M$, denoted $\shift^i M$, is the complex with
$(\shift^i M)_n=M_{n-i}$ and $\partial_n^{\shift^i M}=(-1)^i\partial_{n-i}^M$.
When $M$ is homologically bounded,
let $\pd_R(M)$, $\fd_R(M)$ and $\id_R(M)$ denote the projective, flat and injective
dimensions of $M$, respectively, as in~\cite{avramov:hdouc}.
Let $\catp(R)$, $\catf(R)$ and $\cati(R)$ denote the full subcategories of $\D_{\mathrm{b}}(R)$
consisting of the complexes with, respectively, finite projective, flat and injective dimension.
\end{notationdefinition}

We shall have several occasions to use the following isomorphisms from~\cite[(4.4)]{avramov:hdouc}.

\begin{notationdefinition} \label{basics03}
Let $R\to S$ be a ring homomorphism, and fix an $R$-complex $L$
and $S$-complexes $M$ and $N$.  Assume that each $R$-module
$\HH_i(L)$ is finitely generated and $\inf(L)>-\infty$.

The natural
\emph{tensor-evaluation morphism}
$$\omega_{LMN}\colon\rhom_R(L,M)\lotimes_SN\to\rhom_R(L,M\lotimes_SN)$$
is an isomorphism when $\sup(M)<\infty$ and either 
$L\in\catp(R)$ or $N\in\catf(S)$.

The natural
\emph{Hom-evaluation morphism}
$$\theta_{LMN}\colon L\lotimes_R\rhom_S(M,N)\to\rhom_{S}(\rhom_R(L,M),N)$$
is an isomorphism when $M\in\D_{\mathrm{b}}(S)$ and either 
$L\in\catp(R)$ or $N\in\cati(S)$.
\end{notationdefinition}

\begin{disc} \label{fpd01}
Let
$M$ be a homologically finite $R$-complex with $\pd_R(M)<\infty$
and let $N$ be a homologically bounded $R$-complex.
Because $M$ is homologically finite and $\pd_R(M)<\infty$, we know
from~\cite[(2.13)]{christensen:scatac} that
the $R$-complex $\rhom_R(M,R)$ is homologically finite and has finite projective dimension.
Hence, tensor-evaluation~\eqref{basics03} yields the first isomorphism in the next sequence,
and the second isomorphism
is tensor-cancellation:
$$\rhom_R(M,R)\lotimes_RN \simeq\rhom_R(M,R\lotimes_RN) \simeq\rhom_R(M,N).
$$
\end{disc}

\begin{notationdefinition} \label{basics01}
Let $\vf\colon R\to S$ be a local ring homomorphism.
We denote by $\comp R$ the completion of $R$ at its maximal ideal
and let
$\varepsilon^{}_R\colon R\to\comp{R}$ denote
the natural map.
The \emph{completion} of $\vf$ is the unique local ring homomorphism
$\comp{\vf}\colon\comp{R}\to\comp{S}$ 
such that $\comp\vf\circ\varepsilon_R=\varepsilon_S\circ\vf$.
The \emph{semi-completion} of $\vf$ is the composition
$\grave{\vf}=\varepsilon^{}_S\circ\vf\colon R\to \comp{S}$, 
and the flat dimension of $\vf$ is $\fd(\vf)=\fd_R(S)$.
By~\cite[(1.1)]{avramov:solh} the map $\grave{\vf}$ admits a 
\emph{Cohen factorization}, that is, there is a diagram of local ring homomorphisms,
$R\xrightarrow{\Dot{\vf}}R'\xrightarrow{\vf'}\comp S$, where $\grave\vf=\vf'\circ\Dot{\vf}$, with
$\Dot{\vf}$ flat, the closed fibre $R'/\m R'$ regular, $R'$ complete, and
$\vf'$ surjective.
\end{notationdefinition}

\begin{notationdefinition} \label{sdc01}
A homologically finite $R$-complex $C$ is \emph{semidualizing} if
the homothety morphism $\chi^R_C\colon R\to\rhom_R(C,C)$ is an
isomorphism in $\D(R)$.   
A complex $D$ is \emph{dualizing}
if it is semidualizing and $\id_R(D)$ is finite.
\end{notationdefinition}

\begin{disc} \label{sdc02}
Let $\vf\colon R\to S$ be a local ring homomorphism
of finite flat dimension and let $M$ be a  homologically finite $R$-complex.
From~\cite[(5.7)]{christensen:scatac} and~\cite[(4.5)]{frankild:rrhffd}
we know that $S\lotimes_R M$ is semidualizing for $S$
if and only if $M$ is semidualizing for $R$,
and $S\lotimes_R M$ is dualizing for $S$
if and only if $M$ is dualizing for $R$ and $\vf$ is Gorenstein
by~\cite[(5.1)]{avramov:lgh}.
For example, the map $\vf$ is Gorenstein 
if it is flat  with Gorenstein closed fibre~\cite[(4.2)]{avramov:lgh}
or surjective with $\Ker(\vf)$ generated by an $R$-regular sequence~\cite[(4.3)]{avramov:lgh}.
Consult~\cite{avramov:lgh} for more information on Gorenstein homomorphisms.
\end{disc}

\begin{disc} \label{dc01}
If $R$ is a homomorphic image of a Gorenstein ring, e.g., if 
$R$ is complete, then $R$ admits a dualizing complex by~\cite[(V.10.4)]{hartshorne:rad}.
\end{disc}

\begin{notationdefinition} \label{sdc03}
Let $C$ be a semidualizing $R$-complex.  A homologically finite $R$-complex
$M$ is \emph{$C$-reflexive} if the complex $\rhom_R(M,C)$ is homologically
bounded and the biduality morphism 
$\delta^C_M\colon M\to\rhom_R(\rhom_R(M,C),C)$
is an isomorphism in $\D(R)$.  Set
$$\gcdim_R(M):=\begin{cases}
\inf(C)-\inf(\rhom_R(M,C)) & \text{if $M$ is $C$-reflexive} \\
\infty & \text{otherwise.}
\end{cases}$$
When $C=R$ we write $\gdim_R(M)$ in lieu of $\gkdim{R}_R(M)$;
this is the \emph{G-dimension} of
Auslander and Bridger~\cite{auslander:smt}
and Yassemi~\cite{yassemi:gd}.
\end{notationdefinition}

\begin{disc} \label{sdc04}
Assume that $R$ admits a dualizing
complex $D$.  Each homologically finite $R$-complex $M$ is
$D$-reflexive by~\cite[(V.2.1)]{hartshorne:rad}, and~\cite[(2.12)]{christensen:scatac} 
tells us that $M$ is
semidualizing for $R$ if and only if $\rhom_R(M,D)$ is so.
\end{disc}

\begin{defn} \label{sdm01}
Let $C$ be a semidualizing $R$-module.  

An $R$-module $G$ is \emph{$\text{G}_C$-projective}
if there exists an exact sequence of $R$-modules
$$X=\cdots\xra{\partial^X_2}P_1
\xra{\partial^X_1} P_0 
\xra{\partial^X_0} C\otimes_R P_{-1}
\xra{\partial^X_{-1}} C\otimes_R P_{-2}
\xra{\partial^X_{-2}}\cdots$$
such that 
$G\cong\coker(\partial^X_1)$, 
each $P_i$ is projective, 
and  $\Hom_R(X,C\otimes_R Q)$ is exact for each 
projective $R$-module $Q$.

An $R$-module $G$ is \emph{$\text{G}_C$-flat}
if there exists an exact sequence  of $R$-modules
$$Y=\cdots\xra{\partial^X_2}F_1
\xra{\partial^Y_1} F_0 
\xra{\partial^Y_0} C\otimes_R F_{-1}
\xra{\partial^Y_{-1}} C\otimes_R F_{-2}
\xra{\partial^Y_{-2}}\cdots$$
such that $G\cong\coker(\partial^Y_1)$, each $F_i$ is flat,  and 
$\Hom_R(C,I)\otimes_R Y$ is exact for each 
injective $R$-module $I$.

An $R$-module $G$ is \emph{$\text{G}_C$-injective}
if there exists an exact sequence  of $R$-modules
$$Z=\cdots\xra{\partial^Z_2}\Hom_R(C,I_1)
\xra{\partial^Z_1} \Hom_R(C,I_0)
\xra{\partial^Z_0} I_{-1}
\xra{\partial^Z_{-1}} I_{-2}
\xra{\partial^Z_{-2}}\cdots$$
such that $G\cong\coker(\partial^Z_1)$, each $I_i$ is  injective,  and 
$\Hom_R(\Hom_R(C,I),Z)$ is exact for each 
injective $R$-module $I$.

Let $M$ be a homologically bounded
$R$-complex.  A \emph{$\text{G}_C$-projective resolution} of $M$ is an isomorphism
$H\simeq M$ in $\D(R)$ where $H$ is a complex of $\text{G}_C$-projective $R$-modules
such that $H_i=0$ for all $i\ll 0$.  The \emph{$\text{G}_C$-projective dimension}
of $M$ is 
$$\gcpd_R(M):=
\inf\{\sup\{n\mid H_n\neq 0\}\mid \text{$H\simeq M$ is a $\text{G}_C$-projective resolution}\}.
$$
The \emph{$\text{G}_C$-flat dimension} of $M$ is defined similarly and denoted
$\gcfd_R(M)$, while the \emph{$\text{G}_C$-injective dimension} $\gcid_R(M)$
is dual.
When $C=R$ we write $\gpd_R(M)$, $\gfd_R(M)$ and $\gid_R(M)$;
these are the \emph{G-projective, G-flat, and G-injective dimensions} 
of Enochs, Jenda and Torrecillas~\cite{enochs:gipm,enochs:gf}.
\end{defn}

\begin{disc} \label{sdm02}
Let $C$ be a semidualizing $R$-module, and let $R\ltimes C$ denote the
trivial extension of $R$ by $C$.  Let $M$ be a
homologically bounded $R$-complex, and view $M$ as an $R\ltimes C$-complex
via the natural surjection $R\ltimes C\to R$.
From~\cite[(2.16)]{holm:smarghd}
there are equalities
\begin{gather*}
\gcpd_R(M)=\gpd_{R\ltimes C}(M) \qquad\qquad
\gcfd_R(M)=\gfd_{R\ltimes C}(M) \\
\gcid_R(M)=\gid_{R\ltimes C}(M).
\end{gather*}
It is known in a number of cases that the quantities
$\gcpd_R(M)$ and $\gcfd_R(M)$ are simultaneously finite.
When $C=R$ and $R$ admits a dualizing complex, this is in~\cite[(4.3)]{christensen:ogpifd}.
When $M$ is a module, it is in~\cite[(3.5)]{esmkhani:ghdac} and~\cite[(3.3)]{sather:crct}.
We deal with the general case in Proposition~\ref{sdm03}.
\end{disc}

The next categories 
come from~\cite{avramov:rhafgd,christensen:scatac}
and are commonly known as Foxby classes. 

\begin{notationdefinition} \label{sdc05}
Let $C$ be a semidualizing $R$-complex.  

The
\emph{Auslander class} with respect to $C$ is the full subcategory 
$\catac(R)\subseteq\D_{\mathrm{b}}(R)$ consisting of the complexes $M$ such that
$C\lotimes_R M\in\D_{\mathrm{b}}(R)$
and the natural morphism $\gamma^C_M\colon M\to\rhom_R(C,C\lotimes_R M)$
is an isomorphism in $\D(R)$.

The
\emph{Bass class} with respect to $C$ is the full subcategory 
$\catbc(R)\subseteq\D_{\mathrm{b}}(R)$ consisting of the complexes $N$ such that
$\rhom_R(C, N)\in\D_{\mathrm{b}}(R)$
and the natural morphism $\xi^C_N\colon C\lotimes_R \rhom_R(C,M)\to M$
is an isomorphism in $\D(R)$.
\end{notationdefinition}

\begin{disc} \label{lwc01}
Let $C$ be a semidualizing $R$-complex, and let $X$ be a homologically bounded
$R$-complex.  
If $\fd_R(X)<\infty$, then $X\in\catac(R)$; if $\id_R(X)<\infty$, then $X\in\catbc(R)$; 
see~\cite[(4.4)]{christensen:scatac}.

Let $\vf\colon R\to S$ be a local homomorphism such that $S\in\catac(R)$, e.g.,
such that $\fd(\vf)<\infty$.  From~\cite[(5.3)]{christensen:scatac} we learn that
$S\lotimes_RC$ is a semidualizing  $S$-complex.  Furthermore, for any $S$-complex $Y$,
we have $Y\in\catac(R)$ if and only if $Y\in\cata_{S\lotimes_RC}(S)$,
and $Y\in\catbc(R)$ if and only if $Y\in\catb_{S\lotimes_RC}(S)$.
When $\vf$ has finite flat dimension,
one has 
$X\in \catac(R)$ if and only if $S\lotimes_R X\in\cata_{S\lotimes_RC}(S)$,
and $X\in \catbc(R)$ if and only if $S\lotimes_R X\in\catb_{S\lotimes_RC}(S)$
by~\cite[(5.8)]{christensen:scatac}.
\end{disc}

\begin{disc} \label{sdm02'}
Let $C$ be a semidualizing $R$-module and assume that $R$ admits a dualizing complex $D$.
For each homologically bounded $R$-complex $M$, we have
\begin{enumerate}[\quad\rm(a)]
\item \label{sdm02'item1}
$\gcfd_R(M)<\infty$ if and only if $M\in\cata_{\rhom_R(C,D)}(R)$, and
\item \label{sdm02'item2}
$\gcid_R(M)<\infty$ if and only if $M\in\catb_{\rhom_R(C,D)}(R)$.
\end{enumerate}
This is from~\cite[(4.6)]{holm:smarghd}.
We improve upon this in Proposition~\ref{sdm03} below.
\end{disc}

\begin{disc} \label{sdm02''}
Let $C$ be a semidualizing $R$-complex and assume that $R$ admits a dualizing complex $D$.
For each homologically finite $R$-complex $M$, we have
$\gcdim_R(M)<\infty$ if and only if $M\in\cata_{\rhom_R(C,D)}(R)$
by~\cite[(4.7)]{christensen:scatac}.
\end{disc}

\section{Gorenstein and Complete Intersection Dimensions} \label{sec2}

In this section, we introduce natural variations of existing homological dimensions,
beginning with the $\text{G}_C$-version of the main player of~\cite{iyengar:golh}.

\begin{defn}  \label{gcdim01}
Let $\vf\colon R\to S$ be a local ring homomorphism and $M$ a homologically finite $S$-complex. 
Fix a semidualizing $R$-complex $C$ and a Cohen
factorization
$R\xrightarrow{\Dot{\vf}} R'\xrightarrow{\vf'} \comp{S}$ of $\grave{\vf}$.  
The \emph{$\text{G}_C$-dimension of $M$ over $\vf$} is the
quantity
$$\gcdim_{\vf}(M):= 
\gkdim{R'\lotimes_R C}_{R'}(\comp{S}\lotimes_S M)-\edim(\Dot{\vf}).$$
The \emph{$\text{G}_C$--dimension of $\vf$} is 
$\gcdim(\vf):=\gcdim_{\vf}(S)$. In the case $C=R$, we
follow~\cite{iyengar:golh} and write
$\gdim_{\vf}(M):=\gkdim{R}_{\vf}(M)$ and $\gdim(\vf):=\gkdim{R}(\vf)$.
\end{defn}

\begin{properties} \label{sri0}
Fix a local ring homomorphism $\vf\colon R\to S$, a Cohen factorization $R\to R'\to \comp{S}$
of $\grave{\vf}$, a homologically finite $S$-complex $M$, and a semidualizing
$R$-complex $C$.

\begin{subprops}  \label{sri02}
If $R\to R''\to \comp{S}$ is another Cohen
factorization of $\grave{\vf}$, then the quantities
$\gkdim{R'\lotimes_R C}_{R'}(\comp{S}\lotimes_S M)$ and
$\gkdim{R''\lotimes_R C}_{R''}(\comp{S}\lotimes_S M)$ are simultaneously finite.
This is proved as in~\cite[(3.2)]{iyengar:golh} using~\cite[(6.5)]{christensen:scatac}
and~\cite[(4.4)]{frankild:sdcms}.
It shows that the finiteness of
$\gcdim_{\vf}(M)$ does not depend on the choice of Cohen factorization.  
\end{subprops}

\begin{subprops} \label{sri03}
Arguing as in~\cite[(3.4.1)]{iyengar:golh}, one concludes that the quantities
$\gcdim_{\vf}(X)$, $\gcdim_{\grave{\vf}}(\comp{S}\lotimes_S X)$, and
$\gkdim{\comp{R}\lotimes_RC}_{\comp{\vf}}(\comp{S}\lotimes_S X)$
are simultaneously finite.
\end{subprops}

\begin{subprops} \label{sri07}
Let $D^{\comp{R}}$ and $D^{R'}$ be dualizing complexes for $\comp{R}$ and $R'$,
respectively.
The following conditions are equivalent.
\begin{enumerate}[{\quad\rm(i)}]
\item $\gcdim_{\vf}(M)<\infty$.
\item $\gkdim{R'\lotimes_R C}_{R'}(\comp{S}\lotimes_S M)<\infty$.
\item $\comp{S}\lotimes_S M$ is in $\cata_{\rhom_{R'}(R'\lotimes_R C,D^{R'})}(R')$.
\item $\comp{S}\lotimes_S M$ is in 
$\cata_{\rhom_{\comp{R}}(\comp{R}\lotimes_R C,D^{\comp{R}})}(\comp{R})$.
\end{enumerate}
When $R$ possesses a dualizing complex $D^R$, these conditions are equivalent to:
\begin{enumerate}[{\quad\rm(v)}]
\item $M$ is in $\cata_{\rhom_R(C,D^R)}(R)$.
\end{enumerate}
This is proved as in~\cite[(3.6)]{iyengar:golh} using Remarks~\ref{lwc01} and~\ref{sdm02''}.
\end{subprops}

\begin{subprops} \label{sri09}
If $R$ admits a dualizing complex $D$, then~\cite[(5.1)]{christensen:scatac}
and~\eqref{sri07} show that
$\gcdim(\vf)$ is finite if and only if $S\lotimes_R\rhom_R(C,D)$
is a semidualizing $S$-complex.
\end{subprops}

\begin{subprops} \label{sri10}
If $D$ is dualizing for $R$, then $\gkdim{D}_{\vf}(M)$ is finite.
Indeed, because $\dot\vf$ is flat with Gorenstein closed fibre,
the complex $R'\lotimes_R D$ is dualizing for $R'$ by Remark~\ref{sdc02},
so the desired conclusion follows from the definition using
Remark~\ref{sdc04}.
\end{subprops}
\end{properties}

We continue with complete intersection dimensions.
When $M$ is a module, Definition~\ref{cidim03} is from~\cite{sahandi:hfd},
which is in turn modeled on~\cite{avramov:cid}. 

\begin{defn} \label{cidim01}
Let $R$ be a local ring.  A \emph{quasi-deformation} of $R$ is a diagram
of local ring homomorphisms $R\xra{\vf} R'\xla{\tau} Q$ such that $\vf$ is
flat, and $\tau$ is surjective with kernel generated by a $Q$-regular sequence;
if the kernel of $\tau$ is generated by a $Q$-regular sequence of length $c$,
we will sometimes say that the quasi-deformation has \emph{codimension $c$}.
\end{defn}

\begin{defn} \label{cidim03}
Let $(R,\m)$ be a local ring.  For each homologically bounded $R$-complex $M$,
define
the 
\emph{complete intersection projective dimension},
\emph{complete intersection flat dimension} and
\emph{complete intersection injective dimension} of $M$ as, respectively,
\begin{align*}
\cipd_R(M)
&:=\inf\left\{\pd_Q(R'\lotimes_R M)-\pd_Q(R')\left| \text{
\begin{tabular}{@{}c@{}}
$R\to R'\from Q$ is a \\ quasi-deformation 
\end{tabular}
}\!\!\!\right. \right\} \\
\cifd_R(M)
&:=\inf\left\{\fd_Q(R'\lotimes_R M)-\pd_Q(R')\left| \text{
\begin{tabular}{@{}c@{}}
$R\to R'\from Q$ is a \\ quasi-deformation 
\end{tabular}
}\!\!\!\right. \right\} \\
\ciid_R(M)
&:=\inf\left\{\id_Q(R'\lotimes_R M)-\pd_Q(R')
\left| \text{
\begin{tabular}{@{}c@{}}
$R\to R'\from Q$ is a \\ quasi-deformation 
\end{tabular}
}\!\!\!\right. \right\}.
\end{align*}
When $M$ is homologically finite we follow~\cite{avramov:cid,sather:cidc} and 
define the \emph{complete intersection dimension} of $M$ as
$\cidim_R(M):=\cipd_R(M)$.
\end{defn}

\begin{disc} \label{cidim04}
Let $R$ be a local ring and $M$ a homologically bounded $R$-complex.

Given a quasi-deformation $R\to R'\from Q$, a result of Gruson and 
Raynaud~\cite[Seconde Partie, Thm.~(3.2.6)]{raynaud:cpptpm}, and 
Jensen~\cite[Prop.~6]{jensen:vl} tells us that the quantities
$\pd_Q(R'\lotimes_RM)$ and $\fd_Q(R'\lotimes_RM)$ are simultaneously
finite.  From this, it follows that
$\cifd_R(M)<\infty$ if and only if $\cipd_R(M)<\infty$.

Arguing as in~\cite[(1.13.2)]{avramov:cid} and using~\cite[(4.2)]{avramov:hdouc},
one deduces that the
quantities $\cifd_R(M)<\infty$ if  and only if $\cifd_{\comp{R}}(\comp{R}\lotimes_R M)<\infty$.
On the other hand, we do not know if $\ciid_R(M)<\infty$ implies 
$\ciid_{\comp{R}}(\comp{R}\lotimes_R M)<\infty$.
See, however, Corollary~\ref{ciid02}\eqref{ciid02item2}.
\end{disc}

We make the next definition 
for our version of Foxby equivalence in Theorem~\ref{thmC}.

\begin{defn} \label{ci*id01}
Let $(R,\m)$ be a local ring and $M$ a homologically bounded $R$-complex.
Define the \emph{upper complete intersection injective dimension} of $M$ as
$$
\uciid_R(M)
:=\inf\left\{
\text{\begin{tabular}{@{}r@{}}
$\id_Q(R'\lotimes_R M)$ \\
$-\pd_Q(R')$ 
\end{tabular}}\left| \text{
\begin{tabular}{@{}c@{}}
$R\to R'\from Q$ is a quasi-deformation \\
such that $R'$ has Gorenstein formal \\
fibres and $R'/\m R'$ is Gorenstein
\end{tabular}
}\!\!\!\right. \right\}.
$$
\end{defn}

\begin{disc} \label{uciid01}
The formal fibres of $R'$ are Gorenstein when $R'$ is complete or, more generally,
when $R'$ is excellent or admits a dualizing complex.
\end{disc}

\begin{disc} \label{ci*id02}
Let $R$ be a local ring and $M$ a homologically bounded $R$-complex.
If $\uciid_R(M)<\infty$, then  the definitions imply 
$\ciid_R(M)<\infty$.
\end{disc}

\begin{question} \label{ci*id03}
Let $R$ be a local ring and $M$ a homologically bounded $R$-complex.
If $\ciid_R(M)<\infty$, 
must we have $\uciid_R(M)<\infty$?
\end{question}

\begin{disc} \label{cidim12}
Assume that $R$ is a complete intersection
and $M$ is a homologically
bounded $R$-complex.
Arguing as in~\cite[(1.3)]{avramov:cid},
we conclude $\cipd_R(M)<\infty$,
and similarly for $\cifd_R(M)$, $\ciid_R(M)$ and $\uciid_R(M)$.
\end{disc}

The following formulas are complete intersection versions of the
classical Bass formula for injective dimension.  
Note that the 
analogue of the Auslander-Buchsbaum formula for complete intersection
dimension was proved in~\cite[(3.3)]{sather:cidc}.

\begin{prop} \label{bass01}
Let $R$ be a local ring and $M$ a homologically finite $R$-complex.
\begin{enumerate}[\quad\rm(a)]
\item \label{bass01b}
If $\ciid_R(M)<\infty$, then $\ciid_R(M)=\depth(R)-\inf(M)$.
\item \label{bass01c}
If $\uciid_R(M)<\infty$, then $\uciid_R(M)=\depth(R)-\inf(M)$.
\end{enumerate}
\end{prop}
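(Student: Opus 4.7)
\emph{Plan and lower bound.} I would prove both parts in parallel by establishing matching inequalities, with the same argument producing the bound for $\uciid_R(M)$ when restricted to the narrower class of quasi-deformations in Definition~\ref{ci*id01}. Fix a quasi-deformation $R \xra{\vf} R' \xla{\tau} Q$ with $\id_Q(R' \lotimes_R M)$ finite and set $c = \pd_Q(R')$. Since $\vf$ is flat, $R' \lotimes_R M = R' \otimes_R M$ is a homologically finite $Q$-complex with $\inf(R' \lotimes_R M) = \inf(M)$ (by faithful flatness of $\vf$). Combining the classical Bass formula $\id_Q(R' \lotimes_R M) = \depth(Q) - \inf(M)$, Auslander--Buchsbaum $c = \depth(Q) - \depth(R')$, and flat base change $\depth(R') = \depth(R) + \depth(R'/\m R')$ yields the identity
\[
\id_Q(R' \lotimes_R M) - c = \depth(R) + \depth(R'/\m R') - \inf(M),
\]
from which $\ciid_R(M), \uciid_R(M) \geq \depth(R) - \inf(M)$ follow by taking infima.

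\emph{Upper bound via modification.} Set $s = \depth(R'/\m R')$, pick $f_1,\dots,f_s \in R'$ whose images form a maximal regular sequence in $R'/\m R'$, and lift them to $g_1,\dots,g_s \in Q$ via $\tau$. A Koszul computation combined with flatness of $\vf$ and Nakayama's lemma shows that $\underline f$ is regular on each $R' \otimes_R \HH_i(M)$ (in particular on $R'$), so $R \to R'' := R'/(\underline f)$ stays flat and $R \to R'' \from Q'' := Q/(\underline g)$ is a quasi-deformation with $\pd_{Q''}(R'') = c$ and $\depth(R''/\m R'') = 0$. Iterated change of rings for injective dimension (built on the adjunction $\rhom_Q(Q/(g), N) \simeq \shift^{-1}(N/gN)$, valid when $g$ is $Q$- and $N$-regular) yields $\id_{Q''}(R'' \lotimes_R M) = \id_Q(R' \lotimes_R M) - s < \infty$; substituting into the previous identity, the new quasi-deformation realizes exactly the value $\depth(R) - \inf(M)$, completing (a). For (b), the modified quasi-deformation still satisfies the hypotheses of Definition~\ref{ci*id01}: $R''/\m R'' = (R'/\m R')/(\underline f)$ is Gorenstein as a regular quotient of a Gorenstein local ring, and the canonical isomorphism $\kappa(\p'') \otimes_{R''} \comp{R''} \cong \kappa(\p') \otimes_{R'} \comp{R'}$ at any $\p'' \in \spec(R'')$ with contraction $\p' \in \spec(R')$ identifies each formal fibre of $R''$ with the corresponding formal fibre of $R'$, preserving the Gorenstein hypothesis on formal fibres.

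\emph{Main obstacle.} The technically delicate point is the change-of-rings identity for injective dimension applied to the \emph{complex} $R' \lotimes_R M$ rather than to a single module: one must ensure that $\underline f$ acts regularly on each homology module $R' \otimes_R \HH_i(M)$, which the Koszul/flatness argument supplies, and that the shift $\shift^{-1}$ in the adjunction drops $\id$ by exactly one per step so that iteration gives a net decrease of $s$. Once this is verified, everything else reduces to bookkeeping with the classical Bass and Auslander--Buchsbaum formulas.
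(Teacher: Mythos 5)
Your proof is correct, and it reaches the formula by a genuinely different reduction than the paper. The paper handles the positive-depth closed fibre by \emph{localizing}: it first shows $\ciid_R(M)$ can be computed using only quasi-deformations with artinian closed fibre (choose $P\in\Min_{R'}(R'/\m R')$, localize the diagram, and use that injective dimension does not increase under localization), and then a single Bass/Auslander--Buchsbaum computation shows every such quasi-deformation with finite $\id$ yields exactly $\depth(R)-\inf(M)$; part (b) is the same after noting the localized diagram stays in the restricted class. You instead split the statement into two inequalities: the lower bound falls out of the same Bass/AB computation applied to an \emph{arbitrary} quasi-deformation via $\depth(R')=\depth(R)+\depth(R'/\m R')$, with no reduction at all, and the upper bound is obtained by \emph{cutting down} the closed fibre, killing a maximal regular sequence lifted to $Q$. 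That construction is sound but needs more machinery than the paper's localization: the local flatness criterion to see $\underline f$ is regular on each $R'\otimes_R\HH_i(M)$ and that $R\to R''$ stays flat (note the regularity on $R'$ itself comes from applying this to $N=R$, not as a ``particular'' case of the homology modules), permutability of regular sequences to see that $\Ker(Q''\to R'')$ is generated by a $Q''$-regular sequence of length $c$, and a change-of-rings argument for injective dimension of complexes; for the latter, only the \emph{finiteness} of $\id_{Q''}(R''\lotimes_RM)$ is needed (the exact drop by $s$, which depends on shift conventions, then follows a posteriori from the identity anyway). What each buys: the paper's localization is shorter and yields as a byproduct the useful fact that $\ciid$ is computed by artinian-fibre quasi-deformations, while your quotient construction is in the spirit of the paper's Section 3 manipulations of quasi-deformations (Lemma~\ref{qd01}, Theorem~\ref{thmF}) and makes the lower bound completely transparent; your verification that the cut-down diagram stays in the restricted class for $\uciid$ (Gorenstein quotient fibre, formal fibres of $R''$ identified with those of $R'$) is also correct.
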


\begin{proof}
\eqref{bass01b}
We begin by observing the equality
$$
\ciid_R(M)
=\inf\left\{
\text{\begin{tabular}{@{}r@{}}
$\id_Q(R'\lotimes_R M)$ \\
$-\pd_Q(R')$ 
\end{tabular}}\left| \text{
\begin{tabular}{@{}c@{}}
$R\to R'\from Q$ is a quasi-deformation \\
such that $R'/\m R'$ is artinian
\end{tabular}
}\!\!\!\right. \right\}
$$
where $\m$ is the maximal ideal of $R$.
Indeed, the inequality ``$\leq$'' follows from the fact that the collection of
all quasi-deformations contains the collection of all quasi-deformations
$R\to R'\from Q$ such that $R'/\m R'$ is artinian. For the opposite inequality,
fix a codimension $c$ quasi-deformation
$R\to R'\from Q$.
Choose a prime ideal $P\in\Min_{R'}(R'/\m R')$ and set $\p=\tau^{-1}(P)$.
From~\cite[(5.1.I)]{avramov:hdouc} we conclude  
$$\id_{Q_{\p}}(R'_P\lotimes_R M)=\id_{Q_{\p}}((R'\lotimes_R M)_{\p})
\leq \id_Q(R'\lotimes_R M).$$
One checks readily that the localized
diagram $R\xra{\vf_P} R'_P\xla{\tau_P} Q_{\p}$ is a quasi-deformation
and that the closed fibre $R'_P/\m R'_P\cong(R'/\m R')_P$ is artinian.
With the equalities $\pd_Q(R')=c=\pd_{Q_{\p}}(R'_P)$, this establishes 
the other inequality.

Assume $\ciid_R(M)<\infty$ and fix a quasi-deformation
$R\xra{\vf} R'\from Q$ such that 
$R'/\m R'$ is artinian and
$\id_Q(R'\lotimes_R M)<\infty$.
The $Q$-complex $R'\lotimes_R M$ is homologically finite,
so the Bass formula and Auslander-Buchsbaum formulas
provide the first equality in the following sequence:
\begin{align*}
\id_Q(R'\lotimes_R M)-\pd_Q(R') 
&=[\depth(Q)-\inf(R'\lotimes_R M)]-[\depth(Q)-\depth(R')] \\
&=-\inf(M)+\depth(R') \\
&=\depth(R)-\inf(M).
\end{align*}
The second and third equalities follow from the fact that $\vf$ is flat and local 
with artinian closed fibre.
Since this quantity is independent of the 
choice of quasi-deformation, only depending on the finiteness of $\id_Q(R'\lotimes_R M)$,
the advertised formula follows.

Part~\eqref{bass01c}
is established similarly.
\end{proof}

We close this section with relative versions of 
complete intersection dimensions.

\begin{defn} \label{cidim07}
Let $\vf\colon R\to S$ be a local ring homomorphism and $M$ a homologically finite $S$-complex.
The \emph{complete intersection dimension of $M$ over $\vf$}  
and \emph{complete intersection injective dimension of $M$ over $\vf$}  
are, respectively, 
\begin{align*}
\cidim_{\vf}(M)&:= 
\inf\left\{\cidim_{R'}(\comp{S}\lotimes_S M)-\edim(\Dot{\vf})
\left| \text{\begin{tabular}{c}
$R\xra{\dot\vf}R'\xra{\vf'}\comp{S}$ is a Cohen \\
factorization of $\grave\vf$
\end{tabular}}\right.\!\!\!\right\}\\
\ciid_{\vf}(M)&:= 
\inf\left\{\ciid_{R'}(\comp{S}\lotimes_S M)-\edim(\Dot{\vf})
\left| \text{\begin{tabular}{c}
$R\xra{\dot\vf}R'\xra{\vf'}\comp{S}$ is a Cohen \\
factorization of $\grave\vf$
\end{tabular}}\right.\!\!\!\right\}.
\end{align*}
The \emph{complete intersection dimension of $\vf$} and
\emph{complete intersection injective dimension of $\vf$} are 
$\cidim(\vf):=\cidim_{\vf}(S)$ and $\ciid(\vf):=\ciid_{\vf}(S)$.
\end{defn}

\begin{disc} \label{cidim11}
We do not introduce $\cipd_{\vf}(M)$ and $\cifd_{\vf}(M)$ because
they would be the same as $\cidim_{\vf}(M)$ by Remark~\ref{cidim04}.
On the other hand, we do not introduce $\uciid_{\vf}(M)$ because we do not need it for our
results.
\end{disc}

\begin{properties} \label{cidim0}
Fix a local ring homomorphism $\vf\colon R\to S$, a Cohen factorization $R\to R'\to \comp{S}$
of $\grave{\vf}$, and a homologically finite $S$-complex $M$.

\begin{subprops}  \label{cidim08}
If $R$ is a
complete intersection, then $\cidim_{\vf}(M)$ and $\ciid_{\vf}(M)$
are finite.
This  
follows from Remark~\ref{cidim12}
because the
fact that $\dot\vf$ is flat with regular closed fibre implies that $R'$ is a
complete intersection.
\end{subprops}

\begin{subprops} \label{cidim09}
As in~\eqref{sri03} one checks that the quantities
$\cidim_{\vf}(M)$, $\cidim_{\grave\vf}(\comp{S}\lotimes_SM)$ and
$\cidim_{\comp\vf}(\comp{S}\lotimes_SM)$ are simultaneously finite,
as are the quantities
$\ciid_{\vf}(M)$, 
$\ciid_{\grave\vf}(\comp{S}\lotimes_SM)$ and
$\ciid_{\comp\vf}(\comp{S}\lotimes_SM)$.
\end{subprops}
\end{properties}

\begin{questions} \label{cidim10}
Let $\vf\colon R\to S$ be a local ring homomorphism and $M$
a homologically finite $S$-complex. 
Is the finiteness of $\cidim_{\vf}(M)$ and/or $\ciid_{\vf}(M)$ independent of the choice of 
Cohen factorization?
Are  $\cidim_{\vf}(M)$ and $\cifd_R(M)$ simultaneously finite?
Are  $\ciid_{\vf}(M)$ and $\ciid_R(M)$ simultaneously finite?
\end{questions}

\section{Structure of Quasi-Deformations} \label{sec5}

Given a quasi-deformation of $R$, we show in this section
how to construct nicer quasi-deformations that are related to the original one.
One outcome  is the proof (in~\ref{qd02}) of Theorem~\ref{thmF} from the introduction.
This is similar in spirit to~\cite[(1.14)]{avramov:cid}, but different in scope,
and will be used frequently in the sequel. 

\begin{lem} \label{qd01}
Let $R\xra{\vf}R'\xla{\tau}Q$ be a codimension $c$
quasi-deformation
of the local ring $(R,\m)$ such that $R'/\m R'$ is Cohen-Macaulay.  
There exists a commutative
diagram of local ring homomorphisms
$$\xymatrix{
R\ar[r]^-{\vf}\ar[rd]_-{\vf'} & R' \ar[d]^f & Q \ar[l]_-{\tau} \ar[d]^g \\
& R'' & Q' \ar[l]_-{\tau'}
}$$
such that $\vf'$ is flat with Gorenstein closed fibre,
$\tau'$ is surjective with kernel generated by  $Q'$-regular sequence of length $c$,
the natural map $R'\otimes_QQ'\to R''$ is bijective,
and one has $\tor^Q_{\geq 1}(R',Q')=0$ and $\dim(R''/\m R'')=\dim(R'/\m R')$.
\end{lem}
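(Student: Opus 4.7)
The plan is to apply the Cohen factorization theorem of Avramov--Foxby--Herzog~\cite[(1.1)]{avramov:solh} to the composite $R\to\hat{R'}$ in order to produce $Q'$ as a flat $Q$-algebra whose closed fibre over $R$ is a regular local ring of dimension $c+d$, where $d=\dim(R'/\m R')$. Once this is done, setting $R'':=R'\otimes_Q Q'\cong Q'/(g(\mathbf{x})Q')$ will realize the closed fibre of $\vf'\colon R\to R''$ as the quotient of that regular local ring by the length-$c$ regular sequence $g(\mathbf{x})$---that is, as a complete intersection (in particular, Gorenstein) local ring of dimension~$d$.

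First, I would replace $Q$ by its completion $\hat Q$: this is a faithfully flat change of base that preserves the quasi-deformation structure and leaves $R'/\m R'$ unchanged, so we may assume $Q$ is complete. Next, apply the Cohen factorization theorem to $R\to\hat R'$ to obtain a factorization $R\xrightarrow{\dot g}\tilde R\xrightarrow{\pi}\hat R'$ with $\dot g$ flat, $\tilde R=R[[T_1,\dots,T_e]]$ (where $e=\edim(R'/\m R')$) complete with regular closed fibre of dimension~$e$, and $\pi$ surjective. The Cohen-Macaulayness of $A=R'/\m R'$ (of dimension~$d$) implies that the kernel of the induced surjection $\tilde R/\m\tilde R\twoheadrightarrow A$ has height $e-d$ in the regular local ring $\tilde R/\m\tilde R$, so it contains a regular sequence $\bar{\mathbf w}$ of length $e-d$. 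Lift $\bar{\mathbf w}$ to a sequence $\mathbf w\subset\ker(\pi)\subseteq\tilde R$; by the local criterion of flatness, $\mathbf w$ is $\tilde R$-regular, and $Q':=\tilde R/(\mathbf w)$ is flat over $R$ with closed fibre the regular local ring $\tilde R/\m\tilde R/(\bar{\mathbf w})$ of dimension $c+d$. Then construct $g\colon Q\to Q'$ by lifting the composite $Q\to\hat R'$ through the surjection $\pi$ (which is possible because $\tilde R$ is a power-series ring over $R$, so every ring homomorphism into $\hat R'$ lifts through $\pi$) and composing with the quotient $\tilde R\twoheadrightarrow Q'$; flatness of $g$ over $Q$ follows from flatness of $\dot g$ combined with the local criterion of flatness applied to the quotient by $\mathbf w$.

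With $Q'$ and $g$ in hand, set $R'':=Q'/(g(\mathbf{x})Q')$ and let $\tau'\colon Q'\twoheadrightarrow R''$ be the quotient map. The remaining verifications are then routine: $R'\otimes_Q Q'\cong R''$ and $\tor^Q_{\ge 1}(R',Q')=0$ by flatness of $g$; the $Q'$-regularity of the length-$c$ sequence $g(\mathbf{x})$ by flatness of $g$ applied to the $Q$-regular sequence $\mathbf{x}$; flatness of $\vf'\colon R\to R''$ by composition ($R\to Q'$ is flat, and $g(\mathbf{x})$ is regular in the regular closed fibre of $R\to Q'$); the Gorenstein closed fibre of $\vf'$ and the dimension equality $\dim(R''/\m R'')=d$ by the complete-intersection description above. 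The main obstacle is the coordinated construction of the lift $Q\to\tilde R$ and the subsequent quotient: one must ensure both that the lift respects the ring structure (using the power-series presentation of $\tilde R$ and a standard Nakayama-style iteration) and that the induced $g\colon Q\to Q'$ is flat, which in turn requires verifying that the closed fibre $Q'/\m Q'$ inherits a regular local structure of the correct dimension~$c+d$---the place where the Cohen-Macaulayness hypothesis on $R'/\m R'$ is used essentially.
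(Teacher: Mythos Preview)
Your approach has a genuine gap: the flatness of $g\colon Q\to Q'$ is asserted but not proved, and in fact it fails in general. Consider the trivial case $R=R'=k$ a field and $Q=k[[t]]$ with $\tau(t)=0$, so $c=1$ and $d=0$. Your Cohen factorization of $R\to\comp{R'}$ is $k\to k\to k$, so $\tilde R=k$, $e=0$, the sequence $\mathbf{w}$ is empty, and $Q'=k$. The only map $g\colon k[[t]]\to k$ is the quotient, which is not flat; moreover $Q'=k$ admits no regular sequence of length $c=1$, so $\tau'$ cannot exist. The problem is structural: your $Q'$ is built from a Cohen factorization of $R\to\comp{R'}$, which knows nothing about $Q$, and the lift $Q\to\tilde R$ (even when it exists --- note $Q$ is not an $R$-algebra, so formal smoothness of $\tilde R$ over $R$ does not directly apply) carries no flatness. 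Relatedly, your dimension count is off: the closed fibre of $R\to Q'$ has dimension $e-(e-d)=d$, not $c+d$; thus after killing $g(\mathbf{x})$ you cannot simultaneously have $g(\mathbf{x})$ regular on the closed fibre and $\dim(R''/\m R'')=d$.

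The paper's proof takes an entirely different route. Rather than trying to carve $Q'$ out of a regular $R$-algebra, it builds $R''$ and $Q'$ as \emph{trivial extensions}: it shows that the relative dualizing module $C=\ext^g_S(\comp{R'},S)$ (from a Cohen factorization $R\to S\to\comp{R'}$) is a semidualizing $\comp{R'}$-module that is flat over $R$, lifts $C$ through the deformation $\comp Q\to\comp{R'}$ to a $\comp Q$-module $B$ using~\cite{auslander:lawlom}, and then sets $R''=\comp{R'}\ltimes C$ and $Q'=\comp Q\ltimes B$. The Gorenstein property of the closed fibre comes from the fact that $C/\m C$ is a canonical module for the Cohen--Macaulay ring $\comp{R'}/\m\comp{R'}$, so the idealization $(\comp{R'}/\m\comp{R'})\ltimes(C/\m C)$ is Gorenstein. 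This is where the Cohen--Macaulay hypothesis is actually used.
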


\begin{proof}
Fix  a Cohen factorization $R\to S\to \comp{R'}$ of the 
semi-completion $\grave\vf\colon R\to\comp{R'}$.
Because $\vf$ is flat with Cohen-Macaulay closed fibre,
we know from~\cite[(3.8.1),(3.8.3)]{avramov:solh}
that $\comp{R'}$ is perfect as an $S$-module, say, of grade $g$.
This yields $\ext^n_S(\comp{R'},S)=0$ for each $n\neq g$,
so there is an isomorphism $\rhom_S(\comp{R'},S)\simeq\shift^{-g}\ext^g_S(\comp{R'},S)$.
(In the language of~\cite{avramov:rhafgd}
this complex  is \emph{dualizing for $\vf$}.
See~\cite{avramov:rhafgd,frankild:qcmpolh} for more
properties and
applications.)  In particular,
the $\comp{R'}$-complex
$\rhom_S(\comp{R'},S)$ 
is semidualizing  by~\cite[(6.6)]{christensen:scatac}, 
and so the module
$C=\ext^g_S(\comp{R'},S)$ is a semidualizing $\comp{R'}$-module.  
Hence, we have from the definition  $\ext^{\geq 1}_{\comp{R'}}(C,C)=0$.

We claim that $C$ is flat as an $R$-module.  
The module $C$ is finitely generated over $\comp{R'}$, and the 
semi-completion $\grave\vf\colon R\to\comp{R'}$ is a local ring homomorphism.
It follows easily that $\inf(k\lotimes_R C)=0$.  
Also, we have an equality $\fd_R(C)=\sup(k\lotimes_R C)$ from~\cite[(5.5.F)]{avramov:hdouc},
and so it suffices to show
$\amp(k\lotimes_R C)=0$.  From~\cite[(5.10)]{avramov:rhafgd} we know that
$k\lotimes_R C$ is dualizing for  $\comp{R'}/\m\comp{R'}$.  
Since the ring $\comp{R'}/\m\comp{R'}$ is Cohen-Macaulay, this implies
$\amp(k\lotimes_R C)=0$ by~\cite[(3.7)]{christensen:scatac} as desired.

The ring $\comp{Q}$ is complete, and the local ring homomorphism
$\comp{\tau}\colon\comp{Q}\to\comp{R'}$ is surjective with kernel
generated by a $\comp{Q}$-regular sequence $\x$ of length $c$.
Using~\cite[(1.7)]{auslander:lawlom},
the vanishing $\ext^{2}_{\comp{R'}}(C,C)=0$
implies that there is a finitely generated $\comp{Q}$-module
$B$ such that $\x$ is $B$-regular and
$C\cong\comp{R'}\otimes_{\comp Q}B\cong B/\x B$.
Let $\tau_1\colon B\to B/\x B\cong C$ denote the natural surjection.

Now we construct the desired diagram guided by the following.
$$\xymatrix{
R\ar[r]^-{\vf}\ar[rd]_-{\grave\vf} & R' \ar[d]^-{\varepsilon_{R'}} & Q \ar[l]_-{\tau} \ar[d]^-{\varepsilon_{Q}} \\
& \comp{R'}\ar[d]^-{f_1} & \comp{Q} \ar[l]_-{\comp\tau}\ar[d]^-{g_1} \\
& \comp{R'}\ltimes C & \comp{Q}\ltimes B \ar[l]_-{\ \ \tau'}
}$$
The triangle in the diagram commutes by
definition, and the upper square commutes by~\ref{basics01}.
One checks readily that the map 
$\tau':=\comp{\tau}\ltimes\tau_1\colon \comp{Q}\ltimes B\to \comp{R'}\ltimes C$ 
is a local ring homomorphism making the bottom square commute.

The fact that the sequence $\x$ is $\comp{Q}$-regular
and $B$-regular implies 
$$\tor^Q_{\geq 1}(R',\comp{Q}\ltimes B)\cong\tor^Q_{\geq 1}(Q/\x Q,\comp{Q}\ltimes B)=0.$$
Using the surjectivity of $\tau$ one checks that the natural map 
$R'\otimes_Q\comp{Q}\to \comp{R'}$ is bijective.
The construction of $\tau'$
shows that $\Ker(\tau')$ is generated by the $(\comp{Q}\ltimes B)$-regular
sequence $g(\x)$
and 
the natural map 
$\comp{R'}\otimes_{\comp{Q}}(\comp{Q}\ltimes B)\to \comp{R'}\ltimes C$ is bijective.
This implies that 
the natural map $R'\otimes_Q(\comp{Q}\ltimes B)\to \comp{R'}\ltimes C$ is bijective.

As $\comp{R'}$ and $C$ are both $R$-flat, we see that the composition
$\vf'=f_1\circ\grave\vf=f_1\circ\varepsilon_{R'}\circ\vf$ is flat.  Also, the closed fibre
of $\vf'$ is $\comp{R'}/\m\comp{R'}\ltimes C/\m C$.
Because
$k\lotimes_R C\simeq C/\m C$ is dualizing for $k\otimes_R\comp{R'}$, 
we conclude that the closed fibre of $\vf'$ is  Gorenstein.
Thus, setting $R''=\comp{R'}\ltimes C$ and $Q'=\comp{Q}\ltimes B$
yields the desired diagram.
\end{proof}

\begin{para} \label{qd02}
\emph{Proof of Theorem~\ref{thmF}.}
One implication follows immediately from the definition of $\cifd_R(M)$.  
For the other implication, assume $\cifd_R(M)<\infty$
and fix a
quasi-deformation
$R\xra{\vf}R'\xla{\tau}Q$ 
such that $\fd_{Q}(R'\lotimes_RM)<\infty$.
Choose a prime ideal $P\in\Min_{R'}(R'/\m R')$ and set $\p=\tau^{-1}(P)$.
From~\cite[(5.1.F)]{avramov:hdouc} we conclude  
$$\fd_{Q_{\p}}(R'_P\lotimes_R M)=\fd_{Q_{\p}}((R'\lotimes_R M)_{\p})<\infty.$$
One checks readily that the localized
diagram $R\xra{\vf_P} R'_P\xla{\tau_P} Q_{\p}$ is a quasi-deformation
and that the closed fibre $R'_P/\m R'_P\cong(R'/\m R')_P$ is artinian.
Thus, we may replace the original quasi-deformation
with the localized one
in order to assume that the closed fibre $R'/\m R'$ is artinian
and, hence, Cohen-Macaulay.

Lemma~\ref{qd01} now yields
a commutative
diagram of local ring homomorphisms
$$\xymatrix{
R\ar[r]^-{\vf}\ar[rd]_-{\vf'} & R' \ar[d]^f & Q \ar[l]_-{\tau} \ar[d]^g \\
& R'' & Q' \ar[l]_-{\tau'}
}$$
such that $\vf'$ is flat with Gorenstein closed fibre,
$\tau'$ is surjective with kernel generated by  $Q$-regular sequence,
the natural map $Q'\otimes_QR'\to R''$ is bijective,
and one has $\tor^Q_{\geq 1}(Q',R')=0$ and $\dim(R''/\m R'')=\dim(R'/\m R')=0$.
It follows that the diagram $R\xra{\vf'}R''\xla{\tau'}Q'$ is a quasi-deformation,
and so it suffices to show that $\fd_{Q'}(R''\lotimes_RM)$ is finite.

Note that the conditions 
$Q'\otimes_QR'\cong R''$ and $\tor^Q_{\geq 1}(R',Q')=0$
yield an isomorphism
$Q'\lotimes_QR'\simeq R''$.
This yields the second equality in the following sequence:
\begin{align*}
\fd_{Q'}(R''\lotimes_RM)
&=\fd_{Q'}(R''\lotimes_{R'}(R'\lotimes_RM)) \\
&=\fd_{Q'}((Q'\lotimes_QR')\lotimes_{R'}(R'\lotimes_RM)) \\
&=\fd_{Q'}(Q'\lotimes_{Q}(R'\lotimes_RM)) \\
&\leq \fd_Q(R'\lotimes_RM) \\
&<\infty.
\end{align*}
The first equality follows from the commutativity of the displayed diagram.
The third equality is tensor-cancellation.
The first inequality is from~\cite[(4.2.F)]{avramov:hdouc},
and the second inequality is by assumption. \qed
\end{para}

\begin{disc} \label{qd03}
We do not know whether there is a result like Theorem~\ref{thmF}
for $\ciid$.  In fact, it is not even clear if, given an $R$-complex
of finite complete intersection injective dimension, there
exists a quasi-deformation $R\to R'\from Q$ such that $Q$ is complete
and $\id_Q(R'\lotimes_R M)$ is finite.
The place where the proof breaks down is in the final displayed
sequence: it is not true in general that the finiteness of
$\id_Q(R'\lotimes_RM)$ implies 
$\id_{Q'}(Q'\lotimes_{Q}(R'\lotimes_RM))<\infty$.  
However, see Proposition~\ref{qd05}.
\end{disc}

Before proving a version of Theorem~\ref{thmF} for upper complete intersection dimension,
we require the following extension of a result of Foxby~\cite[Thm.~1]{foxby:imufbc} for complexes. 
We shall apply it to the completion homomorphism 
$\varepsilon^{}_Q\colon Q\to \comp{Q}$.
Recall that an $R$-complex $M$ is \emph{minimal} if, for
every homotopy equivalence $\alpha\colon M\to M$, each map $\alpha_i\colon M_i\to M_i$ is 
bijective.

\begin{lem} \label{id01}
Let $Q\to Q'$ be a flat local ring homomorphism 
and $N$ a homologically bounded $Q$-complex
such that, for every prime $\p\in\spec(Q)$ such that $N_{\p}\not\simeq 0$, the
fibre $Q'\otimes_Q (Q_{\p}/\p Q_{\p})$ is Gorenstein.
If $\id_Q(N)<\infty$, then $\id_{\comp{Q}}(\comp Q\lotimes_Q N)<\infty$.
\end{lem}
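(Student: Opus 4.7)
The plan is to reduce this complex-level assertion to Foxby's module-level result~\cite[Thm.~1]{foxby:imufbc} via a minimal semi-injective resolution. Since $\id_Q(N)<\infty$ and $N$ is homologically bounded, fix a bounded minimal semi-injective resolution $N\xra{\simeq} I$ in $\D(Q)$ with each term $I_j$ an injective $Q$-module. The structure theorem for injectives over the noetherian ring $Q$ decomposes $I_j=\bigoplus_{\p\in\spec(Q)} E_Q(Q/\p)^{(\mu^j(\p,N))}$, with the exponents being the Bass numbers of $N$; these vanish whenever $N_\p\simeq 0$, so only primes with $N_\p\not\simeq 0$ contribute nonzero summands to any term of $I$.

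For each such $\p$, the fibre $Q'\otimes_Q\kappa(\p)$ is Gorenstein by assumption, so Foxby's theorem applied to the indecomposable injective $E_Q(Q/\p)$ under the flat local map $Q\to Q'$ gives $\id_{Q'}(Q'\otimes_Q E_Q(Q/\p))<\infty$; more precisely, this value equals the fibre dimension $\dim(Q'\otimes_Q\kappa(\p))$, which is bounded above by $\dim(Q')$ uniformly in $\p$ thanks to the dimension formula for flat local homomorphisms. Since direct sums of injective $Q$-modules remain injective over $Q$ and $Q'$ is $Q$-flat, this uniform bound passes to the termwise tensor product, so each $Q'\otimes_Q I_j$ has injective dimension over $Q'$ at most $\dim(Q')$.

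Finally, flatness of $Q'$ over $Q$ yields $Q'\lotimes_Q N\simeq Q'\otimes_Q I$ in $\D(Q')$, and the latter is a bounded complex of $Q'$-modules whose terms all have injective dimension at most $\dim(Q')$. A standard hard-truncation-and-gluing argument (or direct appeal to~\cite[(5.1.I)]{avramov:hdouc}) then delivers $\id_{Q'}(Q'\otimes_Q I)<\infty$, hence $\id_{Q'}(Q'\lotimes_Q N)<\infty$; specializing to $Q'=\comp{Q}$ gives the stated conclusion. The main obstacle I anticipate is assembling the uniform bound on $\id_{Q'}(Q'\otimes_Q E_Q(Q/\p))$ as $\p$ ranges over the (potentially infinite) set of primes appearing in $I$: Foxby's theorem furnishes only prime-by-prime finiteness, and one must leverage both the Gorenstein-fibre hypothesis and the flat local dimension formula to extract a single bound independent of $\p$, without which the infinite direct sums in the $I_j$ could destroy finiteness of injective dimension.
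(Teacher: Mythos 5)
Your proof is correct and follows essentially the same route as the paper: identify which indecomposable injectives can occur in a minimal bounded injective resolution of $N$, apply Foxby's flat base change theorem to each term, and conclude since a bounded complex of modules of finite injective dimension over $Q'$ has finite injective dimension. The only cosmetic differences are that the paper establishes the localization of the resolution vanishes at primes with $N_\p\simeq 0$ by a hands-on split-surjection argument with minimality (rather than citing the Bass-number description of the terms), and it applies Foxby's theorem to the whole injective module $I_n$ at once instead of summand-by-summand, so that the uniform bound you worry about is absorbed into the cited result and needs no separate dimension-formula argument.
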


\begin{proof}
Within this proof, we work in the category of $Q$-complexes, as opposed to the derived
category.  In particular,  a morphism of $Q$-complexes $f\colon X\to Y$ 
is a ``quasiisomorphism'' if each map $\HH_n(f)\colon \HH_n(X)\to \HH_n(Y)$
is bijective, and $f$ is an ``isomorphism'' if each map $f_n\colon X_n\to Y_n$ is
bijective.  Quasiisomorphisms are identified by the symbol $\simeq$, and 
isomorphisms are identified by the symbol $\cong$.

Set $j=\id_Q(N)$ and
let $N\simeq I$ be an injective resolution over $Q$ such that $I_n=0$
for all $n<-j$.
From~\cite[(12.2.2)]{avramov:dgha} there exist $Q$-complexes $I'$ and $I''$ such that
$I'$ is minimal and such that $I''\simeq 0$ and $I\cong I'\oplus I''$.
It follows that $I'$ is a bounded complex of injective $Q$-modules
such that $N\simeq I'$, so
we may replace $I$ with $I'$ in order to assume that $I$ is minimal.  

\textbf{Claim.} Given a prime $\q\in\spec(Q)$ such that $N_{\q}\simeq 0$, we have
$I_{\q}= 0$.  
To prove this,  we start with the quasiisomorphisms $I_{\q}\simeq N_{\q}\simeq 0$ and
consider  the natural morphism $g\colon I\to I_{\q}$.
For each integer $n$, 
the module $I_n$ is isomorphic to a direct sum of injective hulls
$E_Q(Q/\p)$.  Given the isomorphism of $Q$-modules
$$E_Q(Q/\p)_{\q} 
\begin{cases}
=0 & \text{if $\p\not\subseteq\q$} \\
\cong E_Q(Q/\p) & \text{if $\p\subseteq\q$}
\end{cases}
$$
it follows readily
that  each $g_n$ is a split surjection. 
From this we conclude 
that the complex $J=\Ker(g)$ is a bounded complex of injective $Q$-modules.
Furthermore, we use the quasiisomorphism $I_{\q}\simeq 0$ in
the long exact sequence associated to  the exact sequence
$0\to J\xra{h} I\xra{g} I_{\q}\to 0$ in order to conclude that
$h$ is a quasiisomorphism. 
Because $J$ and $I$ are bounded complexes of injective $Q$-modules,
it follows that $h$ is a homotopy equivalence.
Now, using~\cite[(1.7.1)]{avramov:aratc}
we conclude that $h$ is surjective and so $I_{\q}=0$.

From~\cite[Thm.~1]{foxby:imufbc} the fact that the formal
fibre $Q'\otimes_Q (Q_{\p}/\p Q_{\p})$ is Gorenstein
whenever $N_{\p}\not\simeq 0$
implies that each module $Q'\otimes_Q I_n$
has finite injective dimension over $Q'$.  Hence,
the complex 
$Q'\otimes_Q I$ is a bounded complex of $Q'$-modules
of finite injective dimension.  
From the quasiisomorphism
$Q'\lotimes_Q N\simeq Q'\otimes_Q I$
we conclude
$\id_{Q'}(Q'\lotimes_Q N)=\id_{Q'}(Q'\otimes_Q I)<\infty$.
\end{proof}

In the conclusion of the next result,
notice that $R''$ is complete,
and hence it has Gorenstein formal fibres.

\begin{prop} \label{qd04}
Let $(R,\m)$ be a local ring and $M$ a homologically bounded $R$-complex.
Then $\uciid_R(M)<\infty$ if and only if there exists a quasi-deformation
$R\to R''\from Q'$ such that $Q'$ is complete, the closed fibre $R''/\m R''$ is artinian,
and Gorenstein, and $\id_{Q'}(R''\lotimes_RM)<\infty$.
\end{prop}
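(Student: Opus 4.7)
The ``if'' direction is immediate from the definition of $\uciid_R(M)$. For the converse, assume $\uciid_R(M)<\infty$ and fix a quasi-deformation $R\xra{\vf}R'\xla{\tau}Q$ with $\ker\tau$ generated by a $Q$-regular sequence $\x$, such that $R'$ has Gorenstein formal fibres, $R'/\m R'$ is Gorenstein, and $\id_Q(R'\lotimes_R M)<\infty$. The plan is first to localize so that the closed fibre becomes artinian, and then to complete both $R'$ and $Q$ in order to meet the remaining structural conditions; the finiteness of injective dimension after completion will be handled by Lemma~\ref{id01}.

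Following the opening of the proof of Theorem~\ref{thmF}, I would choose $P\in\Min_{R'}(R'/\m R')$ and $\p=\tau^{-1}(P)$, and replace the quasi-deformation by its localization $R\xra{\vf_P}R'_P\xla{\tau_{\p}}Q_{\p}$. The new closed fibre $(R'/\m R')_P$ is artinian by the choice of $P$ and is Gorenstein because localizations of Gorenstein rings are Gorenstein. Finite injective dimension passes to localizations via~\cite[(5.1.I)]{avramov:hdouc}, and the Gorenstein formal-fibre condition on $R'$ descends to $R'_P$ by standard properties of formal fibres. After this reduction I may assume $R'/\m R'$ is artinian and Gorenstein.

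Next, form the diagram $R\xra{\grave\vf}\comp{R'}\xla{\comp\tau}\comp{Q}$. The map $\grave\vf=\varepsilon_{R'}\circ\vf$ is flat, while $\comp\tau$ is surjective with kernel $\x\comp{Q}$, and $\x$ remains regular over $\comp{Q}$ by flatness of $Q\to\comp{Q}$; so this is a quasi-deformation with $\comp{Q}$ complete. Because $R'/\m R'$ is artinian (hence already complete), the base change $\comp{R'}/\m\comp{R'}\cong (R'/\m R')\otimes_{R'}\comp{R'}$ is isomorphic to $R'/\m R'$, so the closed fibre is artinian and Gorenstein. It remains to verify $\id_{\comp{Q}}(\comp{R'}\lotimes_R M)<\infty$, which I intend to deduce from Lemma~\ref{id01} applied to the flat homomorphism $Q\to\comp{Q}$ and the complex $N:=R'\lotimes_R M$.

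The hypothesis $\id_Q(N)<\infty$ holds by assumption, so the remaining task is to check the Gorenstein formal-fibre condition at each prime $\p\in\spec(Q)$ with $N_{\p}\not\simeq 0$. If $\x\not\subseteq\p$, then $R'_{\p}=(Q/\x Q)_{\p}=0$ forces $N_{\p}\simeq 0$; so only primes $\p\supseteq(\x)$ are relevant. For such $\p$, setting $P:=\p/(\x)\in\spec(R')$ one has $\kappa(\p)=\kappa(P)$, and the identification $\comp{R'}=\comp{Q}\otimes_Q R'$ yields
\[
\comp{Q}\otimes_Q\kappa(\p)\cong\comp{R'}\otimes_{R'}\kappa(P),
\]
which is a formal fibre of $R'$ and hence Gorenstein by hypothesis. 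Lemma~\ref{id01} then delivers $\id_{\comp{Q}}(\comp{Q}\lotimes_Q N)<\infty$, and tensor-cancellation (using $\comp{R'}\simeq\comp{Q}\lotimes_Q R'$, valid because $\comp Q$ is $Q$-flat) gives $\comp{R'}\lotimes_R M\simeq\comp{Q}\lotimes_Q N$. Setting $R''=\comp{R'}$ and $Q'=\comp{Q}$ exhibits the desired quasi-deformation. The subtlest point is the preservation of the Gorenstein formal-fibre condition under the initial localization; once that standard fact is granted, the rest is a direct invocation of Lemma~\ref{id01} together with base-change isomorphisms.
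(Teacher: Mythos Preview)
Your proof is correct and follows essentially the same route as the paper's: localize at a minimal prime of the closed fibre to make it artinian (while retaining the Gorenstein and Gorenstein-formal-fibre hypotheses), then complete and invoke Lemma~\ref{id01} after identifying the relevant formal fibres of $Q$ with formal fibres of $R'$ via the surjection $\tau$. The only cosmetic difference is that the paper phrases the fibre identification as $\comp{Q}\otimes_Q (Q_{\p}/\p Q_{\p})\cong \comp{R'}\otimes_{R'}(R'_{\p'}/\p' R'_{\p'})$ with $\p'=\p R'$, which is exactly your $\kappa(\p)\cong\kappa(P)$ argument.
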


\begin{proof}
For the nontrivial implication, assume $\uciid_R(M)<\infty$, 
and fix  a
quasi-deformation
$R\xra{\vf}R'\xla{\tau}Q$ 
such that $R'$ has Gorenstein formal fibres, $R'/\m R'$ is Gorenstein, and
$\id_{Q}(R'\lotimes_RM)<\infty$.
Choose a prime ideal $P\in\Min_{R'}(R'/\m R')$ and set $\p=\tau^{-1}(P)$.
We conclude from~\cite[(5.1.I)]{avramov:hdouc} that the quantity
$$\id_{Q_{\p}}(R'_P\lotimes_R M)=\id_{Q_{\p}}((R'\lotimes_R M)_{\p})$$
is finite.  
Also,  the ring $R'_P$ has Gorenstein formal fibres because $R'$ has the same.
Because the ring $R'_P/\m R'_P\cong (R'/\m R')_P$ is artinian and Gorenstein
the proof of Theorem~\ref{thmF}
shows that we may replace the original quasi-deformation
with a localized one
in order to assume that the closed fibre $R'/\m R'$ is artinian.

Completing our quasi-deformation yields
the next commutative
diagram of local ring homomorphisms
$$\xymatrix{
R\ar[r]^-{\vf}\ar[rd]_-{\grave\vf} & R' \ar[d]^{\varepsilon_{R'}} & Q \ar[l]_-{\tau} \ar[d]^{\varepsilon_{Q}} \\
& \comp{R'} & \comp{Q}. \ar[l]_-{\comp{\tau}}
}$$
As in the proof of Theorem~\ref{thmF} 
it suffices to show that the quantity
$$\id_{\comp Q}(\comp{R'}\lotimes_RM)=\id_{\comp Q}(\comp Q\lotimes_{Q}(R'\lotimes_RM))$$ 
is finite.  
For each $\p\in \spec(Q)$ such that 
$(R'\lotimes_RM)_{\p}\not\simeq 0$, we have
$\p\supseteq\Ker(\tau)$ because
$R'\lotimes_RM$ is an $R'$-complex and $\tau$ is surjective.
For each such $\p$, set $\p'=\p R'\in\spec(R')$.
The fact that $\tau$ is surjective yields an isomorphism of closed fibres
$\comp{Q}\otimes_Q (Q_{\p}/\p Q_{\p})
\cong \comp{R'}\otimes_{R'}(R'_{\p'}/\p' R'_{\p'})$.
In particular, each of these rings is Gorenstein by assumption.
Thus, the finiteness of
$\id_{\comp Q}(\comp Q\lotimes_{Q}(R'\lotimes_RM))$
follows from Lemma~\ref{id01}.
\end{proof}

We will see the utility of the next result
in Theorem~\ref{cifdac01'}\eqref{cifdac01'item2'}.

\begin{prop} \label{qd05}
Let $(R,\m)$ be a local ring and $M$  a homologically bounded $R$-complex
with finite length homology.
Then $\ciid_R(M)$ is finite if and only if there exists a quasi-deformation
$R\to R''\from Q'$ such that $Q'$ is complete, the closed fibre $R''/\m R''$ is artinian, 
and $\id_{Q'}(R''\lotimes_RM)$ is finite.
\end{prop}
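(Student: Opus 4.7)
The ``if'' direction is immediate from the definition of $\ciid_R(M)$, since any quasi-deformation of the stated form directly witnesses finiteness of the infimum. For the converse, the strategy is to mimic the proof of Theorem~\ref{thmF}, first shrinking the closed fibre by localization and then completing the base ring, while exploiting the finite length hypothesis on $\HH_*(M)$ to bypass the obstruction flagged in Remark~\ref{qd03}.

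First I would fix a quasi-deformation $R\xra{\vf}R'\xla{\tau}Q$ with $\id_Q(R'\lotimes_R M)$ finite. As in the proof of Theorem~\ref{thmF}, choose a prime $P\in\Min_{R'}(R'/\m R')$ and set $\p=\tau^{-1}(P)$; the injective-dimension version of the relevant formula, namely \cite[(5.1.I)]{avramov:hdouc}, ensures $\id_{Q_{\p}}(R'_P\lotimes_R M)$ is finite, and the localized diagram $R\to R'_P\from Q_{\p}$ is again a quasi-deformation. We may therefore replace the original data to assume $R'/\m R'$ is artinian.

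Next I would complete, setting $R''=\comp{R'}$ and $Q'=\comp{Q}$. Flatness of $\comp{Q}$ over $Q$ implies that the $Q$-regular sequence generating $\Ker(\tau)$ remains $\comp{Q}$-regular and yields $\comp{Q}\lotimes_Q R'\simeq\comp{R'}$. Thus $R\to R''\from Q'$ is a quasi-deformation with $Q'$ complete, and the closed fibre $R''/\m R''\cong R'/\m R'$ remains artinian (an artinian local ring is its own completion).

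The heart of the proof---and what would be the main obstacle in the general $\ciid$ setting per Remark~\ref{qd03}---is showing $\id_{Q'}(R''\lotimes_R M)<\infty$. Setting $N=R'\lotimes_R M$, the isomorphism above gives $Q'\lotimes_Q N\simeq R''\lotimes_R M$, so it suffices to establish $\id_{\comp{Q}}(\comp{Q}\lotimes_Q N)<\infty$. Here the finite-length hypothesis on $\HH_*(M)$ is decisive: each $\HH_i(N)$ is finitely generated over the artinian ring $R'/\m R'$, hence has finite length over $R'$, and therefore (through the surjection $\tau$) over $Q$. Consequently the only prime $\p\in\spec(Q)$ with $N_{\p}\not\simeq 0$ is $\m_Q$, and the only formal fibre of $Q\to\comp{Q}$ that must be checked in the hypothesis of Lemma~\ref{id01} is $\comp{Q}\otimes_Q(Q_{\m_Q}/\m_Q Q_{\m_Q})=k$, which is trivially Gorenstein. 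Lemma~\ref{id01} then delivers $\id_{\comp{Q}}(\comp{Q}\lotimes_Q N)<\infty$, completing the argument.
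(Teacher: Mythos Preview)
Your proof is correct and follows essentially the same route as the paper's: localize to make the closed fibre artinian, complete, and then invoke Lemma~\ref{id01} after observing that the finite-length hypothesis forces the homology of $N=R'\lotimes_R M$ to be supported only at the maximal ideal of $Q$, so that only the (trivially Gorenstein) formal fibre over the closed point needs checking. One small imprecision: it is not generally true that $\HH_i(N)$ is a module over $R'/\m R'$, since $\HH_i(M)$ need not be annihilated by $\m$; the clean statement (as in the paper) is that $\HH_i(N)\cong R'\otimes_R\HH_i(M)$ has finite length over $R'$---and hence over $Q$---because $\HH_i(M)$ has finite length over $R$ and $R'/\m R'$ is artinian.
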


\begin{proof}
For the nontrivial implication, assume that $\ciid_R(M)$ is finite
and fix a quasi-deformation
$R\xra{\vf}R'\xla{\tau}Q$ 
such that 
$\id_{Q}(R'\lotimes_RM)<\infty$.
Localizing as in the proof of Proposition~\ref{qd04},
we may assume that the closed fibre $R'/\m R'$ is artinian.
Because each $R$-module $\HH_i(M)$ has finite length and
the closed fibre $R'/\m R'$ is artinian, it follows that
each module $\HH_i(R'\lotimes_RM)\cong R'\otimes_R\HH_i(M)$ has finite length
over $R'$ and therefore over $Q$.  In particular, 
there is only one prime $\p\in\spec(Q)$ such that $(R'\lotimes_RM)_{\p}\not\simeq 0$,
namely, the maximal ideal $\p=\n\subset Q$.
Because the formal fibre
$\comp{Q}\otimes_Q Q/\n\cong Q/\n$ is Gorenstein, the argument of Proposition~\ref{qd04}
shows that
$\id_{\comp{Q}}(\comp{R'}\lotimes_R M)$ is finite.
Hence the completed quasi-deformation
$R\xra{\grave\vf}\comp{R'}\xla{\comp\tau}\comp{Q}$ has the desired properties.
\end{proof}

The next result follows from Propositions~\ref{qd04} and~\ref{qd05}
via the proof of~\cite[(1.13)]{avramov:cid}.

\begin{cor} \label{ciid02}
Let $R$ be a local ring and $M$ a homologically bounded $R$-complex.
\begin{enumerate}[\quad\rm(a)]
\item \label{ciid02item1}
One has $\uciid_R(M)<\infty$ if and only if
$\uciid_{\comp{R}}(\comp{R}\lotimes_R M)<\infty$.
\item \label{ciid02item2}
If $M$ has finite length homology,
then one has $\ciid_R(M)<\infty$ if and only if
$\ciid_{\comp{R}}(\comp{R}\lotimes_R M)<\infty$. \qed
\end{enumerate}
\end{cor}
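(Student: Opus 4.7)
The plan is to combine Propositions~\ref{qd04} and~\ref{qd05} with the basic observation that a quasi-deformation whose middle ring is complete may be pushed forward or pulled back across the completion map $\varepsilon_R\colon R\to\comp R$.

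For part~\eqref{ciid02item1}, assume first that $\uciid_R(M)<\infty$. I would invoke Proposition~\ref{qd04} to obtain a quasi-deformation $R\xra{\vf}R'\xla{\tau}Q$ with $Q$ complete, $R'/\m R'$ artinian and Gorenstein, and $\id_Q(R'\lotimes_RM)$ finite. The key point is that $R'$ is itself complete: since $Q$ is complete and $\Ker(\tau)\subseteq\m_Q$ is generated by a regular sequence, the ring $R'=Q/\Ker(\tau)$ is $\m_{R'}$-adically complete; and the $\m R'$-adic topology on $R'$ agrees with the $\m_{R'}$-adic topology because $\m R'$ is $\m_{R'}$-primary (closed fibre artinian). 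Hence $\vf$ extends uniquely to a local ring homomorphism $\bar\vf\colon\comp R\to R'$.

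I anticipate the main technical step to be verifying that $\bar\vf$ is flat. I plan to argue this with a direct-summand trick: the multiplication map $\comp R\otimes_R R'\to R'$ (which exists thanks to $\bar\vf$) is split by the map $r'\mapsto 1\otimes r'$, so $R'$ is a direct summand of $\comp R\otimes_R R'$ as a $\comp R$-module; since $\comp R\otimes_R R'$ is $\comp R$-flat (by flatness of $\vf$), so is $R'$. Then $\comp R\xra{\bar\vf}R'\xla{\tau}Q$ is a quasi-deformation of $\comp R$ with unchanged closed fibre and unchanged $Q$, and $R'$ automatically has Gorenstein formal fibres (see Remark~\ref{uciid01}). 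Tensor-cancellation yields $R'\lotimes_{\comp R}(\comp R\lotimes_RM)\simeq R'\lotimes_RM$, which has finite injective dimension over $Q$, so Proposition~\ref{qd04} applied to $\comp R$ gives $\uciid_{\comp R}(\comp R\lotimes_RM)<\infty$. The converse direction is easier: a quasi-deformation $\comp R\to R''\from Q'$ supplied by Proposition~\ref{qd04} on the $\comp R$-side may be precomposed with $\varepsilon_R$ to obtain a quasi-deformation $R\to R''\from Q'$ with unchanged closed fibre (since $\m\comp R\cdot R''=\m R''$), and $R''\lotimes_RM\simeq R''\lotimes_{\comp R}(\comp R\lotimes_RM)$ has finite injective dimension over $Q'$; another application of Proposition~\ref{qd04} yields $\uciid_R(M)<\infty$.

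For part~\eqref{ciid02item2}, the argument is formally identical, with Proposition~\ref{qd05} in place of Proposition~\ref{qd04}. The one additional point to verify is that finite length homology transfers across the completion: since each $\HH_i(M)$ is annihilated by a power of $\m$, the canonical map $\HH_i(M)\to \comp R\otimes_R\HH_i(M)\cong\HH_i(\comp R\lotimes_RM)$ is an isomorphism of finite-length $R$-modules, so the hypothesis of Proposition~\ref{qd05} is satisfied on both sides of the correspondence.
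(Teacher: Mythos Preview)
Your overall strategy is exactly what the paper intends: it cites ``the proof of~\cite[(1.13)]{avramov:cid}'', which is precisely the argument you outline---produce a quasi-deformation with $Q$ (hence $R'$) complete via Proposition~\ref{qd04} or~\ref{qd05}, factor $R\to R'$ through $\comp R$, and observe that the same diagram now serves as a quasi-deformation of $\comp R$ (and conversely, precompose with $\varepsilon_R$). The bookkeeping about closed fibres, Gorenstein formal fibres, tensor-cancellation, and finite-length homology is all correct.

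There is, however, a genuine gap in your ``direct-summand trick'' for the flatness of $\bar\vf\colon\comp R\to R'$. The proposed section $s\colon r'\mapsto 1\otimes r'$ is \emph{not} $\comp R$-linear: for $b\in\comp R$ one has $b\cdot s(r')=b\otimes r'$ while $s(b\cdot r')=1\otimes\bar\vf(b)r'$, and these differ in $\comp R\otimes_R R'$ whenever $b\notin R$ (take for instance $R'=\comp R$ and note that $\comp R\otimes_R\comp R$ is strictly larger than $\comp R$). So $R'$ is not exhibited as a $\comp R$-summand of the flat module $\comp R\otimes_R R'$, and the argument collapses.

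The conclusion is nonetheless correct, and the fix is standard. Since $R\to\comp R$ is flat, a free resolution $F_\bullet$ of $k$ over $R$ base-changes to a free resolution $\comp R\otimes_R F_\bullet$ of $k$ over $\comp R$; tensoring over $\comp R$ with $R'$ (via $\bar\vf$) yields, by associativity, the complex $F_\bullet\otimes_R R'$ (via $\vf$). Hence
\[
\tor_1^{\comp R}(k,R')\cong\tor_1^{R}(k,R')=0,
\]
and the local flatness criterion gives that the local homomorphism $\bar\vf$ is flat. With this correction, your proof goes through and matches the paper's.
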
 

\section{Stability Results} \label{sec6}

This section documents some situations where 
combinations of complexes either detect or inherit 
homological properties of their component pieces.
We start with partial converses to parts of~\cite[(2.1)]{christensen:apac}
which are useful, e.g., for Theorem~\ref{cifdac01'}\eqref{cifdac01'item2'}.

\begin{prop} \label{sdc06}
Let $R$ be a local ring and $C$ a semidualizing $R$-complex.
Let $M$ and $N\not\simeq 0$ be homologically finite $R$-complexes with
$\pd_R(N)<\infty$.
\begin{enumerate}[\quad\rm(a)]
\item \label{sdc06item2}
If $M\lotimes_RN\in\catac(R)$ or $\rhom_R(N,M)\in\catac(R)$,
then $M\in\catac(R)$.
\item \label{sdc06item4}
If $M\lotimes_RN\in\catbc(R)$ or $\rhom_R(N,M)\in\catbc(R)$,
then $M\in\catbc(R)$.
\end{enumerate}
\end{prop}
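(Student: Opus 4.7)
I will treat part~(a) in detail; part~(b) is strictly dual, with the adjoint pair $(C\lotimes_R-,\rhom_R(C,-))$ replaced by $(\rhom_R(C,-),C\lotimes_R-)$ and $\gamma^C$ by $\xi^C$. First I unify the two hypotheses in~(a). By Remark~\ref{fpd01} we have $\rhom_R(N,M)\simeq\rhom_R(N,R)\lotimes_R M$, and $N':=\rhom_R(N,R)$ is again homologically finite with finite projective dimension (cf.~\cite[(2.13)]{christensen:scatac}); the biduality isomorphism $N^{**}\simeq N\not\simeq 0$ forces $N'\not\simeq 0$. Replacing $N$ by $N'$ exchanges the two hypotheses of~(a), so I may assume $M\lotimes_R N\in\catac(R)$ and will show $M\in\catac(R)$.

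The central tool will be a Nakayama principle for complexes: if $Y\in\D(R)$ is bounded below with finitely generated homology in each degree, then $Y\lotimes_R k\in\D_{\mathrm{b}}(R)$ implies $Y\in\D_{\mathrm{b}}(R)$, and $Y\lotimes_R k\simeq 0$ implies $Y\simeq 0$. This is immediate from a minimal free resolution $P\simeq Y$ by noting that $P\otimes_R k$ has zero differentials, so $\sup Y\le\sup P=\sup(Y\lotimes_R k)$ and similarly for $Y\simeq 0$. To verify the first Auslander-class condition $C\lotimes_R M\in\D_{\mathrm{b}}(R)$, tensor $C\lotimes_R M\lotimes_R N\in\D_{\mathrm{b}}(R)$ with $k$. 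Because $N\not\simeq 0$ has finite projective dimension, $N\lotimes_R k$ is a nonzero bounded complex of $k$-vector spaces, and
$(C\lotimes_R M)\lotimes_R N\lotimes_R k\simeq((C\lotimes_R M)\lotimes_R k)\lotimes_k(N\lotimes_R k)$
decomposes as a direct sum of shifts of $(C\lotimes_R M)\lotimes_R k$. Its boundedness forces $(C\lotimes_R M)\lotimes_R k\in\D_{\mathrm{b}}(R)$, and since $C\lotimes_R M$ is bounded below with finitely generated homology the Nakayama principle yields $C\lotimes_R M\in\D_{\mathrm{b}}(R)$.

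With $\sup(C\lotimes_R M)<\infty$ in hand, tensor-evaluation from Remark~\ref{basics03} (with $L=C$ and $N\in\catf(R)$) provides a natural isomorphism
\begin{equation*}
\rhom_R(C,C\lotimes_R M)\lotimes_R N\simeq\rhom_R(C,C\lotimes_R M\lotimes_R N),
\end{equation*}
under which $\gamma^C_M\lotimes_R 1_N$ corresponds to $\gamma^C_{M\lotimes_R N}$ by naturality of the unit of adjunction. Since $\gamma^C_{M\lotimes_R N}$ is an isomorphism, $\cone(\gamma^C_M)\lotimes_R N\simeq 0$. Moreover $\rhom_R(C,C\lotimes_R M)\lotimes_R N\simeq M\lotimes_R N\in\D_{\mathrm{b}}(R)$, so the same $k$-tensor trick gives $\rhom_R(C,C\lotimes_R M)\in\D_{\mathrm{b}}(R)$; hence $\cone(\gamma^C_M)$ is bounded with finitely generated homology. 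Tensoring $\cone(\gamma^C_M)\lotimes_R N\simeq 0$ with $k$ and stripping off $N\lotimes_R k$ yields $\cone(\gamma^C_M)\lotimes_R k\simeq 0$, and one more pass of the Nakayama principle forces $\cone(\gamma^C_M)\simeq 0$, so $\gamma^C_M$ is an isomorphism and $M\in\catac(R)$. The main obstacle I foresee is certifying the naturality identification $\gamma^C_{M\lotimes_R N}\leftrightarrow\gamma^C_M\lotimes_R 1_N$ under tensor-evaluation and tracking the ``bounded below with finitely generated homology'' hypotheses at each invocation of the Nakayama principle.
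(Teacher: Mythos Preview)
Your proof is correct and follows essentially the same architecture as the paper's: the reduction of the $\rhom_R(N,M)$ hypothesis to the tensor case via $N'=\rhom_R(N,R)$, and the commutative square identifying $\gamma^C_M\lotimes_R N$ with $\gamma^C_{M\lotimes_R N}$ through tensor-evaluation, are exactly what the paper does. The only difference is that where the paper cites \cite[(3.1)]{foxby:daafuc} for the boundedness of $C\lotimes_R M$ and \cite[(2.10)]{iyengar:golh} to conclude that $\gamma^C_M$ is an isomorphism from $\gamma^C_M\lotimes_R N$ being one, you supply both steps directly via your Nakayama principle (tensoring with $k$ and splitting off $N\lotimes_R k$); this makes your argument more self-contained but is not a different method, since those cited results are proved by precisely this technique.
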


\begin{proof}
We prove part~\eqref{sdc06item2}.
The proof of~\eqref{sdc06item4} is similar.

Assume first that $M\lotimes_RN$ is in $\catac(R)$.
This implies that 
$(C\lotimes_R M)\lotimes_R N\simeq C\lotimes_R(M\lotimes_R N)$ is homologically bounded, 
so~\cite[(3.1)]{foxby:daafuc}
implies that $C\lotimes_R M$ is homologically bounded.
(The corresponding implication in the proof 
of~\eqref{sdc06item4} uses tensor-evaluation~\eqref{basics03}.)

We consider the following commutative diagram 
wherein the unmarked isomorphism is tensor-associativity
and $\omega_{CCM}^{}$
is tensor-evaluation~\eqref{basics03}.
$$\xymatrix{
M\lotimes_RN \ar[rr]^-{\gamma_M^C\lotimes_R N} \ar[d]_-{\gamma_{M\lotimes_R N}^C} 
&& (C\lotimes_R\rhom_R(C,M))\lotimes_R N \ar[d]_{\simeq} \\
C\lotimes_R\rhom_R(C,M\lotimes_RN) && C\lotimes_R(\rhom_R(C,M)\lotimes_RN) 
\ar[ll]_{C\lotimes_R\omega^{}_{CCM}}^-{\simeq}
}$$
Because $\gamma_{M\lotimes_R N}^C$ is an isomorphism,
the diagram shows that the same is true of
$\gamma_M^C\lotimes_RN$,
and so~\cite[(2.10)]{iyengar:golh} implies that 
$\gamma_M^C$ is an isomorphism.

Assume next $\rhom_R(N,M)\in\catac(R)$.
Remark~\ref{fpd01} says that $\rhom_R(N,R)$ is a homologically
finite $R$-complex of finite projective dimension
such that 
$$\rhom_R(N,M)\simeq\rhom_R(N,R)\lotimes_R M.$$  
Hence, using $\rhom_R(N,R)$ in place of $N$ in the first
case, we find $M\in\catac(R)$.
\end{proof}

The following result paves the way for Proposition~\ref{sdm03}
which is used in the proof of Theorem~\ref{cifdac01}\eqref{cifdac01item4}.

\begin{lem} \label{sdm04}
Let $R$ be a local ring and $C$ a semidualizing $R$-module.  For each homologically
bounded $R$-complex $M$, the following conditions are equivalent.
\begin{enumerate}[\quad\rm(i)]
\item \label{sdm04item1}
$\gpd_R(M)$ is finite.
\item \label{sdm04item2}
$\gfd_R(M)$ is finite.
\item \label{sdm04item3}
$\comp{R}\lotimes_RM$ is in 
$\cata_{D^{\comp{R}}}(\comp{R})$.
\end{enumerate}
\end{lem}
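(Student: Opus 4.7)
The plan is to transfer all three conditions to equivalent statements over the completion $\comp R$ and then apply the characterizations already recorded in the paper. By Remark~\ref{dc01}, $\comp R$ admits a dualizing complex $D^{\comp R}$, and $\comp R$ itself is a semidualizing $\comp R$-module with $\rhom_{\comp R}(\comp R,D^{\comp R})\simeq D^{\comp R}$. Applying Remark~\ref{sdm02'}(a) over $\comp R$ with this choice of semidualizing module yields
\[
\gfd_{\comp R}(\comp R\lotimes_R M)<\infty \iff \comp R\lotimes_R M\in\cata_{D^{\comp R}}(\comp R),
\]
so condition (iii) is equivalent to $\gfd_{\comp R}(\comp R\lotimes_R M)<\infty$. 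Since $\comp R$ admits a dualizing complex, the result~\cite[(4.3)]{christensen:ogpifd} cited in Remark~\ref{sdm02} also gives
\[
\gpd_{\comp R}(\comp R\lotimes_R M)<\infty \iff \gfd_{\comp R}(\comp R\lotimes_R M)<\infty,
\]
so all three conditions over $\comp R$ coincide.

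What remains is to establish the ascent-descent equivalences
\[
\gpd_R(M)<\infty \iff \gpd_{\comp R}(\comp R\lotimes_R M)<\infty, \qquad \gfd_R(M)<\infty \iff \gfd_{\comp R}(\comp R\lotimes_R M)<\infty
\]
under the faithfully flat map $R\to\comp R$. For ascent of G-flatness, I would tensor a bounded G-flat resolution $F\to M$ over $R$ with the flat extension $\comp R$: the resulting bounded complex of flat $\comp R$-modules is totally acyclic, because every injective $\comp R$-module is also injective over $R$ (flatness of $\comp R$ together with the $\otimes$-$\Hom$ adjunction), so the condition that $I\otimes_R F$ be exact for all injective $R$-modules $I$ specializes to the required total acyclicity over $\comp R$. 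Ascent of G-projective dimension then follows from ascent of $\gfd$ combined with the equivalence over $\comp R$ already established.

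The main obstacle will be the descent direction: deducing finiteness of $\gfd_R(M)$ from finiteness of $\gfd_{\comp R}(\comp R\lotimes_R M)$. My approach here is to characterize finiteness of $\gfd_R(M)$ via vanishing of $\tor^R_{\gg 0}(I,M)$ for all injective $R$-modules $I$, and then transfer this vanishing back from $\comp R$ using faithful flatness and the fact that every injective $R$-module embeds naturally in an injective $\comp R$-module after applying $\comp R\otimes_R(-)$. A technical point is that the lemma is stated for $R$-complexes rather than modules, but an induction on $\amp(M)$ using hard truncations reduces the question to the module case, where the descent statement is classical. Once both ascent and descent are in hand, the chain of equivalences $(i)\iff\gpd_{\comp R}<\infty\iff\gfd_{\comp R}<\infty\iff(ii)$ together with the already established equivalence with (iii) completes the proof.
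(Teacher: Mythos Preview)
Your strategy differs from the paper's and contains real gaps. The paper does not argue ascent and descent of $\gpd$ and $\gfd$ along $R\to\comp R$; instead it reduces the complex case directly to the module case (which is \cite[(3.5)]{esmkhani:ghdac}) by truncating a resolution. Concretely: take a flat resolution $F\simeq M$, fix $i>\sup(M)$, and set $K_i=\Ker(\partial^F_i)$. Then $\gfd_R(M)<\infty$ iff $\gfd_R(K_i)<\infty$ (each $F_j$ is flat, hence G-flat), and the exact sequence $0\to F_{\leq i}\to F'\to\shift^{i+1}K_i\to 0$ with $\comp R\otimes_R F_{\leq i}\in\cata_{D^{\comp R}}(\comp R)$ shows that $\comp R\lotimes_R M\in\cata_{D^{\comp R}}(\comp R)$ iff $\comp R\otimes_R K_i\in\cata_{D^{\comp R}}(\comp R)$. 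Now apply the module case to $K_i$. The equivalence for $\gpd$ is handled identically with a projective resolution.

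Your route has concrete problems. The ascent sketch for $\gfd$ is garbled: a bounded G-flat resolution of $M$ is a bounded-below complex of G-flat (not flat) modules and is certainly not ``totally acyclic''; what actually must be shown is that each individual G-flat $R$-module stays G-flat over $\comp R$, and that requires working with the complete resolution of each term. The $\tor$-transfer for descent fails: $\gfd_{\comp R}(\comp R\lotimes_R M)<\infty$ gives $\tor^{\comp R}_{\gg 0}(J,\comp R\lotimes_RM)=0$ only for \emph{injective} $\comp R$-modules $J$, but $\comp R\otimes_RI$ need not be injective (nor of finite injective dimension) over $\comp R$ when $I$ is injective over $R$---this is precisely the obstruction when the formal fibres of $R$ are not Gorenstein---and embedding $\comp R\otimes_RI$ into an injective does not force $\tor$ vanishing on the submodule. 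Even granting the vanishing, $\tor$ vanishing against injectives does not by itself imply $\gfd_R(M)<\infty$. Finally, you never address descent of $\gpd$, and you cannot derive it from descent of $\gfd$ without already assuming (ii)$\Rightarrow$(i).

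Your closing remark about reducing to the module case by hard truncation is in fact the entire argument: applied once to a flat resolution and once to a projective resolution, it gives (ii)$\iff$(iii) and (i)$\iff$(iii) directly, and the ascent/descent detour becomes unnecessary.
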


\begin{proof}
\eqref{sdm04item1}$\iff$\eqref{sdm04item3}.
When $M$ is a module, this is proved in~\cite[(3.5)]{esmkhani:ghdac}.
For the general case, let $F\simeq M$ be an $R$-flat resolution
of $M$.  It follows that 
$\comp{R}\lotimes_{R} F\simeq\comp{R}\lotimes_{R} M$
is an $\comp{R}$-flat resolution of $\comp{R}\lotimes_{R} M$.
Each $F_i$ is a flat $R$-module and is therefore G-flat over $R$.  
Thus, the quantity 
$\gfd_{R}(M)$ is finite if and only if the kernel
$K_i=\Ker(\partial^F_i)$ has finite G-flat dimension over $R$ for 
some (equivalently, every) $i>\sup(M)$.

Fix an integer $i>\sup(M)$ and consider the following truncations of $F$
\begin{align*}
F' &=\qquad 0\to K_i\to F_i\to F_{i-1}\to\cdots \\
F_{\leq i}&=\qquad 0\xra{\hspace{2.5mm}} 0\xra{\hspace{2.5mm}} F_i\to F_{i-1}\to\cdots.
\end{align*}
Our choice of $i$ implies $F'\simeq M$, and so
$\comp{R}\otimes_RF'\simeq \comp{R}\otimes_RM$.
It follows that $\comp{R}\lotimes_RM$ is in 
$\cata_{D^{\comp{R}}}(\comp{R})$ if and only if $\comp{R}\lotimes_RF'$ is in 
$\cata_{D^{\comp{R}}}(\comp{R})$.
The complexes from the display fit into an exact sequence as follows
$$0\to F_{\leq i}\to F'\to\shift^{i+1}K_i\to 0$$
and applying $\comp{R}\otimes_R-$ to this sequence yields a second exact sequence
$$0\to \comp{R}\otimes_RF_{\leq i}\to \comp{R}\otimes_RF'\to\shift^{i+1}\comp{R}\otimes_RK_i\to 0.$$
The complex $\comp{R}\otimes_RF_{\leq i}$ is a bounded complex of $\comp{R}$-flat modules,
and so it is in $\cata_{D^{\comp{R}}}(\comp{R})$ by Remark~\ref{lwc01}.  A standard 
argument using the exact sequence implies that
$\comp{R}\otimes_RF'$ is in $\cata_{D^{\comp{R}}}(\comp{R})$
if and only if $\shift^{i+1}\comp{R}\otimes_RK_i$ is in $\cata_{D^{\comp{R}}}(\comp{R})$.
That is, the complex $\comp{R}\lotimes_RM$ is in 
$\cata_{D^{\comp{R}}}(\comp{R})$
if and only if $\comp{R}\otimes_RK_i$ is in $\cata_{D^{\comp{R}}}(\comp{R})$.

From the first paragraph of this proof, we know that $\gfd_{R}(M)$ is finite if and only if 
$\gfd_R(K_i)$ is finite.  
From~\cite[(3.5)]{esmkhani:ghdac}, we know that
$\gfd_{R}(K_i)$ is finite if and only if 
$\comp{R}\lotimes_RK_i$ is in 
$\cata_{D^{\comp{R}}}(\comp{R})$.
And the second paragraph shows that $\comp{R}\lotimes_RK_i$ is in 
$\cata_{D^{\comp{R}}}(\comp{R})$
if and only if $\comp{R}\otimes_RM$ is in $\cata_{D^{\comp{R}}}(\comp{R})$.
Thus, the equivalence is established.

The proof of \eqref{sdm04item2}$\iff$\eqref{sdm04item3}
is similar using a projective resolution $P\simeq M$.
\end{proof}

\begin{prop} \label{sdm03}
Let $R$ be a local ring and $C$ a semidualizing $R$-module.  For each homologically
bounded $R$-complex $M$, the following conditions are equivalent.
\begin{enumerate}[\quad\rm(i)]
\item \label{sdm03item1}
$\gcpd_R(M)$ is finite.
\item \label{sdm03item2}
$\gcfd_R(M)$ is finite.
\item \label{sdm03item3}
$\comp{R}\lotimes_RM$ is in 
$\cata_{\rhom_{\comp{R}}(\comp{C},D^{\comp{R}})}(\comp{R})$.
\item \label{sdm03item4}
$\gkpd{\comp{C}}_{\comp{R}}(\comp{R}\lotimes_RM)$ is finite.
\item \label{sdm03item5}
$\gkfd{\comp{C}}_{\comp{R}}(\comp{R}\lotimes_RM)$ is finite.
\end{enumerate}
\end{prop}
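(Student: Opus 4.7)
The plan is to reduce the five-way equivalence to Lemma~\ref{sdm04} (which handles the case $C=R$) and Remark~\ref{sdm02'}\eqref{sdm02'item1} (which characterizes $\gkfd{\comp{C}}_{\comp{R}}$-finiteness over the complete ring $\comp{R}$) by passing through the trivial extension $T=R\ltimes C$ and completing.

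First, the equivalence (iii)$\iff$(v) is immediate from Remark~\ref{sdm02'}\eqref{sdm02'item1} applied to the ring $\comp{R}$, which admits the dualizing complex $D^{\comp{R}}$ by Remark~\ref{dc01}, and the semidualizing $\comp{R}$-module $\comp{C}$. Next, I use Remark~\ref{sdm02} to rewrite (i) and (ii) as the finiteness of $\gpd_T(M)$ and $\gfd_T(M)$ (with $M$ regarded as a $T$-complex via the surjection $T\to R$), and to rewrite (iv) and (v) as the finiteness of $\gpd_{\widehat{T}}(\comp{R}\lotimes_R M)$ and $\gfd_{\widehat{T}}(\comp{R}\lotimes_R M)$, where $\widehat{T}$ is the completion of $T$. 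Applying Lemma~\ref{sdm04} to the ring $T$ then yields (i)$\iff$(ii)$\iff$$\widehat{T}\lotimes_T M\in\cata_{D^{\widehat{T}}}(\widehat{T})$, and applying Lemma~\ref{sdm04} to the already-complete ring $\widehat{T}$ yields (iv)$\iff$(v)$\iff$$\comp{R}\lotimes_R M\in\cata_{D^{\widehat{T}}}(\widehat{T})$.

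To bridge these two chains, I need the isomorphism $\widehat{T}\lotimes_T M\simeq\comp{R}\lotimes_R M$ of $\widehat{T}$-complexes. The ring-level identification $\widehat{R\ltimes C}\cong\comp{R}\ltimes\comp{C}$ follows from the computation $(\m\oplus C)^n=\m^n\oplus\m^{n-1}C$ and passage to inverse limits. For the complex-level identification, the key observation is that the ideal $0\oplus C\subset T$ annihilates $M$, so the natural $\widehat{T}$-linear maps $\comp{R}\otimes_R M_i\to\widehat{T}\otimes_T M_i$ sending $r\otimes m\mapsto(r,0)\otimes m$ and $\widehat{T}\otimes_T M_i\to\comp{R}\otimes_R M_i$ sending $(r,c)\otimes m\mapsto r\otimes m$ are mutually inverse in each degree. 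Flatness of $T\to\widehat{T}$ and of $R\to\comp{R}$ then upgrades this degree-wise isomorphism of ordinary tensor products to the desired derived isomorphism.

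The main obstacle is the derived-category identification $\widehat{T}\lotimes_T M\simeq\comp{R}\lotimes_R M$, which amounts to careful bookkeeping once one recognizes that the ideal $0\oplus C$ annihilates $M$ and hence that the $(\m\oplus C)$-adic and $\m$-adic topologies coincide on $M$. Beyond this identification, the five-way equivalence falls out by a direct assembly of Lemma~\ref{sdm04} (in two incarnations), Remark~\ref{sdm02}, and Remark~\ref{sdm02'}\eqref{sdm02'item1}.
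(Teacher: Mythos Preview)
Your proposal is correct and follows essentially the same strategy as the paper: both reduce to Lemma~\ref{sdm04} via the trivial extension $T=R\ltimes C$, use the identification $\comp{T}\cong\comp{R}\ltimes\comp{C}$, and hinge on the derived isomorphism $\comp{T}\lotimes_T M\simeq\comp{R}\lotimes_R M$. The only cosmetic difference is in how this last isomorphism is justified: the paper records $\comp{R}\simeq\comp{T}\lotimes_T R$ and then invokes associativity of the derived tensor product, whereas you verify it degree-wise using flatness of the completion maps; both arguments are valid, though the paper's is slightly more streamlined (in particular it sidesteps the need to check that $(0,c)\otimes m=0$ in $\comp{T}\otimes_T M_i$ for $c\in\comp{C}$, which in your approach tacitly relies on $\comp{C}=\comp{R}\cdot C$).
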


\begin{proof}
The equivalences
\eqref{sdm03item3}$\iff$\eqref{sdm03item4}$\iff$\eqref{sdm03item5}
are in Remarks~\ref{sdm02} and~\ref{sdm02'}.

\eqref{sdm03item2}$\iff$\eqref{sdm03item5}
We start by showing that $\gfd_{R\ltimes C}(M)$ is finite if and only if
$\comp{R}\lotimes_R M$ is in
$\cata_{D^{\comp{R\ltimes C}}}(\comp{R\ltimes C})$.
One verifies the following isomorphisms readily
$$\comp{R}\cong\comp{R\ltimes C}\otimes_{R\ltimes C}R
\simeq\comp{R\ltimes C}\lotimes_{R\ltimes C}R$$
and from this, we have
$$\comp{R}\lotimes_R M
\simeq(\comp{R\ltimes C}\lotimes_{R\ltimes C}R)\lotimes_R M
\simeq\comp{R\ltimes C}\lotimes_{R\ltimes C} M.
$$
Hence, the desired equivalence follows from Lemma~\ref{sdm04}.

From Remark~\ref{sdm02} we have $\gcfd_R(M)=\gfd_{R\ltimes C}(M)$.
The previous paragraph tells us 
that 
$\gfd_{R\ltimes C}(M)$ is finite if and only if
$\comp{R}\lotimes_R M$ is in
$\cata_{D^{\comp{R\ltimes C}}}(\comp{R\ltimes C})$.
One readily verifies the isomorphism
$\comp{R\ltimes C}\cong\comp{R}\ltimes \comp{C}$
and so $\gcfd_R(M)<\infty$ if and only if 
$\comp{R}\lotimes_R M\in\cata_{D^{\comp{R}\ltimes \comp{C}}}(\comp{R}\ltimes \comp{C})$.
Using Remark~\ref{sdm02'}, we conclude that
$\comp{R}\lotimes_R M$ is in
$\cata_{D^{\comp{R}\ltimes \comp{C}}}(\comp{R}\ltimes \comp{C})$
if and only if 
$\gfd_{\comp{R}\ltimes \comp{C}}(\comp{R}\lotimes_R M)$ is finite,
that is, if and only if
$\gkfd{\comp{C}}_{\comp{R}}(\comp{R}\lotimes_R M)$ is finite.

The equivalence \eqref{sdm03item1}$\iff$\eqref{sdm03item4}
is verified similarly.
\end{proof}

The remaining results of this section deal with stability for complete 
intersection dimensions.  The first one 
is used in the proof of Theorem~\ref{cifdac01'}\eqref{cifdac01'item2'}.

\begin{prop} \label{stable01}
Let $R$ be a local ring and let
$M$ and $N$ be homologically bounded $R$-complexes with $\fd_R(N)<\infty$.
\begin{enumerate}[\quad\rm(a)]
\item \label{stable01item1}
If $\cifd_R(M)<\infty$, then $\cifd_R(M\lotimes_RN)<\infty$.
\item \label{stable01item2}
If $\ciid_R(M)<\infty$, then $\ciid_R(M\lotimes_RN)<\infty$.
\item \label{stable01item3}
If $\uciid_R(M)<\infty$, then $\uciid_R(M\lotimes_RN)<\infty$.
\end{enumerate}
\end{prop}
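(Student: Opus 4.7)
The plan is to prove all three parts with a single template, by fixing any quasi-deformation $R\xra{\vf}R'\xla{\tau}Q$ that witnesses the relevant complete intersection dimension of $M$ and showing it also witnesses the corresponding dimension for $M\lotimes_R N$. In particular, for parts (a) and (b) I will work with an arbitrary quasi-deformation, and for part (c) I note that the additional conditions on $R'$ (Gorenstein formal fibres, Gorenstein closed fibre) depend only on the quasi-deformation and not on the complex, so the same one works. The main reduction is tensor-associativity:
\[
R'\lotimes_R(M\lotimes_RN)\simeq (R'\lotimes_RM)\lotimes_RN\simeq X\lotimes_{R'}G,
\]
where $X:=R'\lotimes_RM$ and $G:=R'\otimes_RF$ for a bounded $R$-flat resolution $F\res N$; note $G$ is a bounded complex of $R'$-flat modules, so $\fd_{R'}(G)=0$ and in particular $G\in\catf(R')$.

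For part (a), starting from $\fd_Q(X)<\infty$, tensor-associativity over $Q$ gives
\[
k\lotimes_Q(X\lotimes_{R'}G)\simeq (k\lotimes_QX)\lotimes_{R'}G,
\]
and since $G$ is a bounded complex of $R'$-flat modules, the derived tensor on the right is just the underived tensor, whose homology lies in a bounded range depending only on $\sup(k\lotimes_QX)=\fd_Q(X)$ and $\amp(G)$. Hence $\fd_Q(R'\lotimes_R(M\lotimes_RN))\leq\fd_Q(X)+\amp(G)<\infty$, so this same quasi-deformation witnesses $\cifd_R(M\lotimes_RN)<\infty$.

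For parts (b) and (c), starting from $\id_Q(X)<\infty$, I invoke tensor-evaluation~\ref{basics03} with the ring map $\tau\colon Q\to R'$, taking $L=k$, $M=X$, and $N=G$. Since $\tau$ is surjective and local, the residue field $k$ coincides for $Q$ and $R'$, hence $k$ is homologically finite as a $Q$-complex; and since $G\in\catf(R')$, the evaluation hypotheses are met, giving
\[
\rhom_Q(k,X)\lotimes_{R'}G\res\rhom_Q(k,X\lotimes_{R'}G).
\]
The left side has cohomology concentrated in cohomological degrees at most $\id_Q(X)+\amp(G)$, so $\id_Q(R'\lotimes_R(M\lotimes_RN))<\infty$, which gives $\ciid_R(M\lotimes_RN)<\infty$ for (b) and, using the unchanged Gorenstein conditions on $R'$, also $\uciid_R(M\lotimes_RN)<\infty$ for (c).

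The main obstacle is simply the bookkeeping of three ring structures on $X$ (over $R$, $R'$, and $Q$ compatibly via the quasi-deformation) and verifying that the tensor-evaluation isomorphism from~\ref{basics03} applies with the ring map $Q\to R'$ rather than either of the more obvious maps; the key point making it work is the identification $k_Q=k_{R'}$ from surjectivity of $\tau$ and $R'$-flatness of $G$.
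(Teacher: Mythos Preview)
Your argument is correct and follows the paper's approach: fix a witnessing quasi-deformation for $M$, identify $R'\lotimes_R(M\lotimes_RN)\simeq(R'\lotimes_RM)\lotimes_{R'}(R'\lotimes_RN)$, and deduce finiteness of the relevant $Q$-dimension from the known finiteness for $R'\lotimes_RM$ together with $\fd_{R'}(R'\lotimes_RN)<\infty$. The paper simply cites the inequalities (4.1.F) and (4.5.F) from Avramov--Foxby for that last step, whereas you re-derive them via the residue-field test and tensor-evaluation; the displayed bound $\fd_Q(X)+\amp(G)$ is a bit loose, but finiteness---which is all that is claimed---is unaffected.
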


\begin{proof}
\eqref{stable01item1}
Let $R\to R'\from Q$ be a quasi-deformation such that
$\fd_Q(R'\lotimes_R M)<\infty$.  
The finiteness of $\fd_R(N)$ implies
$\fd_{R'}(R'\lotimes_RN)<\infty$ by~\cite[(4.2)]{avramov:hdouc}
and so~\cite[(4.1.F)]{avramov:hdouc}
provides the finiteness in the next display 
$$\fd_Q(R'\lotimes_R (M\lotimes_RN))
=\fd_Q((R'\lotimes_R M)\lotimes_{R'}(R'\lotimes_RN))<\infty.$$
Hence, we have $\cifd_R(M\lotimes_RN)<\infty$
by definition.

Parts~\eqref{stable01item2} and~\eqref{stable01item3} are proved like~\eqref{stable01item1}
using~\cite[(4.5.F)]{avramov:hdouc}.
\end{proof}

\begin{prop} \label{stable02}
Let $R$ be a local ring with Gorenstein formal fibres. Let
$M$ be a homologically finite $R$-complex
and let $N$ be homologically bounded $R$-complex with $\id_R(N)<\infty$.
\begin{enumerate}[\quad\rm(a)]
\item \label{stable02item1}
If $\cifd_R(M)<\infty$, then $\uciid_R(\rhom_R(M,N))<\infty$.
\item \label{stable02item2}
If $\uciid_R(M)<\infty$, then $\cifd_R(\rhom_R(M,N))<\infty$.
\end{enumerate}
\end{prop}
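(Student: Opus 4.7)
The plan is to treat parts (a) and (b) in parallel. The strategy is to pass to the completion of $R$, invoke a dualizing complex, and then reduce the proposition to a standard Foxby duality computation over the auxiliary ring $Q'$ in a suitable quasi-deformation.

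\emph{Step 1 (reduction and key isomorphism).} Using the Gorenstein-formal-fibres hypothesis, I would first reduce to the case that $R$ is complete: finiteness of $\cifd_R$ and $\uciid_R$ transfers between $R$ and $\comp{R}$ by Corollary~\ref{ciid02} and Remark~\ref{cidim04}; tensor-evaluation and Hom-tensor adjunction produce $\comp{R}\lotimes_R\rhom_R(M,N)\simeq\rhom_{\comp{R}}(\comp{R}\lotimes_RM,\comp{R}\lotimes_RN)$ since $M$ is homologically finite and $\comp{R}$ is flat over $R$; and Lemma~\ref{id01} gives $\id_{\comp{R}}(\comp{R}\lotimes_RN)<\infty$. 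Once $R$ is complete, Remark~\ref{dc01} provides a dualizing complex $D$. By Remark~\ref{lwc01} the assumption $\id_R(N)<\infty$ puts $N$ in $\cat{B}_D(R)$, so $N\simeq D\lotimes_RL$ with $L:=\rhom_R(D,N)$ of finite flat dimension. Tensor-evaluation, valid since $M$ is homologically finite and $L$ has finite flat dimension, yields the crucial isomorphism
$$\rhom_R(M,N)\simeq\rhom_R(M,D\lotimes_RL)\simeq\rhom_R(M,D)\lotimes_RL.$$
Proposition~\ref{stable01} then reduces (a) to $\uciid_R(\rhom_R(M,D))<\infty$ and (b) to $\cifd_R(\rhom_R(M,D))<\infty$.

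\emph{Step 2 (quasi-deformation and base change).} For (a) apply Theorem~\ref{thmF} to produce a quasi-deformation $R\to R''\xla{\tau'}Q'$ with $Q'$ complete, $R''/\m R''$ artinian and Gorenstein, and $\pd_{Q'}(R''\lotimes_RM)<\infty$ (finiteness of projective dimension from Remark~\ref{cidim04} since $M$ is homologically finite). For (b) apply Proposition~\ref{qd04} to get a similar quasi-deformation with $\id_{Q'}(R''\lotimes_RM)<\infty$. In each case $R''$ is complete. Because $R\to R''$ is flat with Gorenstein closed fibre, Remark~\ref{sdc02} makes it Gorenstein, so $D'':=R''\lotimes_RD$ is a dualizing complex for $R''$. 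Tensor-evaluation and adjunction yield
$$R''\lotimes_R\rhom_R(M,D)\simeq\rhom_R(M,D'')\simeq\rhom_{R''}(R''\lotimes_RM,D'').$$

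\emph{Step 3 (duality over $Q'$).} Let $D^{Q'}$ be a dualizing complex for $Q'$, which exists because $Q'$ is complete. Since $\tau'$ is surjective with kernel generated by a $Q'$-regular sequence of length $c=\pd_{Q'}(R'')$, the complex $\rhom_{Q'}(R'',D^{Q'})$ is also a dualizing complex for $R''$, and it agrees with $D''$ up to a shift and twist by an invertible $R''$-module (which preserves finiteness of injective and flat dimensions). Adjunction along $\tau'$ gives
$$\rhom_{R''}(X,\rhom_{Q'}(R'',D^{Q'}))\simeq\rhom_{Q'}(X,D^{Q'})$$
for $X:=R''\lotimes_RM$. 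Standard Foxby/Grothendieck duality over $Q'$—that $\rhom_{Q'}(-,D^{Q'})$ interchanges homologically finite complexes of finite projective dimension with those of finite injective dimension—then yields $\id_{Q'}(\rhom_{Q'}(X,D^{Q'}))<\infty$ in case (a) and $\pd_{Q'}(\rhom_{Q'}(X,D^{Q'}))<\infty$ in case (b). Reading this back through Proposition~\ref{qd04} or Theorem~\ref{thmF} gives the required finiteness from Step 1, completing both parts.

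\emph{Main obstacle.} The most delicate point is the identification of the two dualizing complexes $R''\lotimes_RD$ and $\rhom_{Q'}(R'',D^{Q'})$ over $R''$ up to shift and invertible twist, together with the book-keeping required to conclude that $\id_{Q'}$ and $\pd_{Q'}$ are preserved by this identification. The reduction to the complete case is also subtle: Foxby's base change result (Lemma~\ref{id01}) and the Gorenstein-formal-fibres hypothesis are exactly what is needed to ascend finite injective dimension from $R$ to $\comp{R}$.
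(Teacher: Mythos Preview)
Your argument is correct, but it follows a markedly different route from the paper's.

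The paper never completes $R$ and never factors $N$ through a dualizing complex. Instead, it works directly with a quasi-deformation $R\to R'\leftarrow Q$ furnished by Theorem~\ref{thmF} (for part~(a)) or Proposition~\ref{qd04} (for part~(b)) and uses the Gorenstein-formal-fibres hypothesis at that level: a result of Avramov--Foxby shows that every fibre of the flat map $R\to R'$ is Gorenstein and that $R'$ itself inherits Gorenstein formal fibres; Foxby's base-change theorem (Lemma~\ref{id01}) then gives $\id_{R'}(R'\lotimes_R N)<\infty$ directly. With this in hand, the isomorphism
\[
R'\lotimes_R\rhom_R(M,N)\;\simeq\;\rhom_{R'}(R'\lotimes_R M,\,R'\lotimes_R N)
\]
(tensor-evaluation plus adjunction) and a single appeal to standard bounds of the form $\id_Q(\rhom_{R'}(A,B))\leq\fd_Q(A)+\id_{R'}(B)$ finish the job in one stroke.

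Your approach trades this direct computation for structural machinery: reduce to complete $R$, write $N\simeq D\lotimes_R L$ with $\fd_R(L)<\infty$ via Foxby equivalence, strip off $L$ using Proposition~\ref{stable01}, and then handle the remaining $\rhom_R(M,D)$ by comparing two dualizing complexes on $R''$ and invoking duality over $Q'$. This works, but it is longer and requires uniqueness of dualizing complexes plus the duality statement over $Q'$, whereas the paper needs only a flat-base-change result for $\id$ and elementary dimension inequalities. The paper's method also makes the role of the Gorenstein-formal-fibres hypothesis more transparent: it is used not merely to pass to the completion, but to push $\id_R(N)<\infty$ all the way up to $R'$.
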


\begin{proof}
\eqref{stable02item1}
Use Theorem~\ref{thmF} to find  a quasi-deformation 
$R\to R'\from Q$ such that
$R'/\m R'$ is Gorenstein and
$\fd_Q(R'\lotimes_R M)<\infty$. 
Because $R$ has Gorenstein formal fibres, we learn from~\cite[(4.1)]{avramov:glp}
that $R'$ has Gorenstein formal fibres
and, for each prime $\p\in\spec(R)$, the fibre $R'\otimes_R (R_{\p}/\p R_{\p})$
is Gorenstein.  Using~\cite[Thm.~1]{foxby:imufbc} as in the proof of
Proposition~\ref{qd04} we conclude that
$\id_{R'}(R'\lotimes_R N)$ is finite.

Because $M$ is homologically finite and $R'$ is flat over $R$, tensor-evaluation~\eqref{basics03} 
yields the 
first isomorphism in the following sequence:
\begin{align*}
R'\lotimes_R\rhom_R(M,N)
&\simeq \rhom_R(M,R'\lotimes_RN)\\
&\simeq \rhom_R(M,\rhom_{R'}(R',R'\lotimes_RN))\\
&\simeq \rhom_{R'}(R'\lotimes_RM,R'\lotimes_RN).
\end{align*}
The second isomorphism comes from the fact that $R'\lotimes_R N$ is an $R'$-complex,
and the third isomorphism is Hom-tensor adjointness.
This sequence yields the equality in the next sequence
and the finiteness is from~\cite[(4.1.I)]{avramov:hdouc}:
$$\id_Q(R'\lotimes_R\rhom_R(M,N))
=\id_Q(\rhom_{R'}(R'\lotimes_RM,R'\lotimes_RN))<\infty.$$
Hence, we have $\ciid_R(\rhom_R(M,N))<\infty$
by definition.

Part~\eqref{stable02item2} is proved like~\eqref{stable02item1}
using~\cite[(4.5.I)]{avramov:hdouc}.
\end{proof}

The previous result yields the following behavior of
complete intersection dimensions with respect to ``dagger-duality''.

\begin{cor} \label{stable03}
Let $R$ be a local ring admitting a dualizing complex $D$,
and let $M$ be a homologically finite $R$-complex.  
\begin{enumerate}[\quad\rm(a)]
\item \label{stable03item1}
We have $\cifd_R(M)<\infty$ if and only if $\uciid_R(\rhom_R(M,D))<\infty$.
\item \label{stable03item2}
We have $\uciid_R(M)<\infty$ if and only if $\cifd_R(\rhom_R(M,D))<\infty$.
\end{enumerate}
\end{cor}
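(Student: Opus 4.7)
The plan is to derive Corollary~\ref{stable03} directly from Proposition~\ref{stable02} applied with $N=D$. Two preliminary observations are needed. First, since $R$ admits a dualizing complex, $R$ has Gorenstein formal fibres by Remark~\ref{uciid01}, so the standing hypothesis of Proposition~\ref{stable02} is satisfied. Second, $D$ is dualizing, so $\id_R(D)<\infty$, so the hypothesis on $N$ in Proposition~\ref{stable02} is met.

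The forward implications in both \eqref{stable03item1} and \eqref{stable03item2} are immediate: taking $N=D$ in Proposition~\ref{stable02}\eqref{stable02item1} gives the ``only if'' direction of \eqref{stable03item1}, and taking $N=D$ in Proposition~\ref{stable02}\eqref{stable02item2} gives the ``only if'' direction of \eqref{stable03item2}.

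For the reverse implications, the key ingredient is biduality. Because $M$ is homologically finite and $D$ is dualizing, Remark~\ref{sdc04} yields that $M$ is $D$-reflexive; in particular $\rhom_R(M,D)$ is homologically finite and the biduality morphism
\[
\delta^D_M\colon M\xrightarrow{\simeq}\rhom_R(\rhom_R(M,D),D)
\]
is an isomorphism in $\D(R)$. For the ``if'' direction of \eqref{stable03item1}, assume $\uciid_R(\rhom_R(M,D))<\infty$; applying Proposition~\ref{stable02}\eqref{stable02item2} to the homologically finite complex $\rhom_R(M,D)$ with $N=D$ gives
\[
\cifd_R(\rhom_R(\rhom_R(M,D),D))<\infty,
\]
and biduality rewrites the left side as $\cifd_R(M)$. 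The ``if'' direction of \eqref{stable03item2} is symmetric: assuming $\cifd_R(\rhom_R(M,D))<\infty$ and applying Proposition~\ref{stable02}\eqref{stable02item1} to $\rhom_R(M,D)$ with $N=D$ produces $\uciid_R(M)<\infty$ after invoking biduality.

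There is no real obstacle here beyond keeping the hypotheses of Proposition~\ref{stable02} straight; the essential content has already been established in Proposition~\ref{stable02} and Remark~\ref{sdc04}, and the corollary is simply the combination of these with the observation that the hypothesis ``$R$ admits a dualizing complex'' supplies both the Gorenstein-formal-fibre condition on $R$ and the reflexivity of $M$ that enables the reverse implications.
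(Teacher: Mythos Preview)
Your proof is correct and follows essentially the same route as the paper: both use Proposition~\ref{stable02} with $N=D$ for the forward implications, and for the converses both apply Proposition~\ref{stable02} to the homologically finite complex $\rhom_R(M,D)$ and then invoke the biduality isomorphism $M\simeq\rhom_R(\rhom_R(M,D),D)$ from Remark~\ref{sdc04}. The only cosmetic difference is that the paper cites \cite[(V.3.1)]{hartshorne:rad} directly for the Gorenstein formal fibres, whereas you cite Remark~\ref{uciid01}, which records the same fact.
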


\begin{proof}
Because $R$ admits a dualizing complex, it has Gorenstein formal fibres
by~\cite[(V.3.1)]{hartshorne:rad}.
So, if $\cifd_R(M)<\infty$, then $\uciid_R(\rhom_R(M,D))<\infty$ by
Proposition~\ref{stable02}\eqref{stable02item1}.
Conversely, if $\uciid_R(\rhom_R(M,D))<\infty$,
then the isomorphism
$$M\simeq\rhom_R(\rhom_R(M,D),D)$$ 
from Remark~\ref{sdc04}
implies
$\cifd_R(M)<\infty$ by Proposition~\ref{stable02}\eqref{stable02item2}.
This establishes part~\eqref{stable03item1}, and
part~\eqref{stable03item2} is proved similarly.
\end{proof}

\begin{prop} \label{stable04}
Let $R$ be a local ring. Let
$M$ be a homologically finite $R$-complex with $\pd_R(M)<\infty$
and let $N$ be homologically bounded $R$-complex.
\begin{enumerate}[\quad\rm(a)]
\item \label{stable04item1}
If $\cifd_R(N)<\infty$, then $\cifd_R(\rhom_R(M,N))<\infty$.
\item \label{stable04item2}
If $\ciid_R(N)<\infty$, then $\ciid_R(\rhom_R(M,N))<\infty$.
\item \label{stable04item3}
If $\uciid_R(N)<\infty$, then $\uciid_R(\rhom_R(M,N))<\infty$.
\end{enumerate}
\end{prop}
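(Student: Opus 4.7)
The plan is to reduce each part to the corresponding statement of Proposition~\ref{stable01} by means of the isomorphism
\[
\rhom_R(M,N)\simeq\rhom_R(M,R)\lotimes_RN
\]
recorded in Remark~\ref{fpd01}. This is available because $M$ is homologically finite with $\pd_R(M)<\infty$; the same remark tells us that $\rhom_R(M,R)$ is itself a homologically finite $R$-complex of finite projective dimension, and in particular has finite flat dimension over $R$.

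With this identification in hand, part~\eqref{stable04item1} is immediate: apply Proposition~\ref{stable01}\eqref{stable01item1} with the roles of ``$M$'' and ``$N$'' from that statement played, respectively, by our $N$ (which has finite $\cifd_R$) and by $\rhom_R(M,R)$ (which has finite flat dimension). The conclusion is that $\cifd_R(N\lotimes_R\rhom_R(M,R))<\infty$, and the displayed isomorphism rewrites this as $\cifd_R(\rhom_R(M,N))<\infty$.

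Parts~\eqref{stable04item2} and~\eqref{stable04item3} are proved the same way, invoking Proposition~\ref{stable01}\eqref{stable01item2} and Proposition~\ref{stable01}\eqref{stable01item3} in place of~\eqref{stable01item1}. No further argument is required, since all of the work (the manipulation of quasi-deformations and the tensor-product bookkeeping inside a quasi-deformation $R\to R'\from Q$) has already been carried out in Proposition~\ref{stable01}.

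There is no real obstacle here; the only point that deserves attention is the verification that Remark~\ref{fpd01} genuinely applies. It does, because the hypotheses on $M$ match those of that remark verbatim, and the resulting complex $\rhom_R(M,R)$ has finite projective (hence flat) dimension, which is exactly the hypothesis needed on the second factor of the tensor product in Proposition~\ref{stable01}.
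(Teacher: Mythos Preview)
Your proof is correct and follows essentially the same approach as the paper: invoke Remark~\ref{fpd01} to obtain the isomorphism $\rhom_R(M,N)\simeq\rhom_R(M,R)\lotimes_RN$ with $\rhom_R(M,R)$ of finite projective (hence flat) dimension, and then apply Proposition~\ref{stable01}. The paper compresses this into two sentences, but the content is identical.
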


\begin{proof}
Remark~\ref{fpd01} says that $\rhom_R(M,R)$ is a homologically
finite $R$-complex of finite projective dimension
such that 
$\rhom_R(N,M)\simeq\rhom_R(N,R)\lotimes_R M$.
Hence, the desired result follows from
Proposition~\ref{stable01}.
\end{proof}

\section{Complete Intersection Dimensions and Foxby Classes} \label{sec4}

The first result of this section contains Theorem~\ref{thmA} from the introduction.

\begin{thm} \label{cifdac01}
Let $R$ be a local ring and fix 
a homologically bounded $R$-complex $M$ and a semidualizing $R$-complex $C$.  
\begin{enumerate}[\quad\rm(a)]
\item \label{cifdac01item1}
If $\cifd_R(M)<\infty$, then
$M\in\catac(R)$.
\item \label{cifdac01item4}
If $C$ is a module and $\cifd_R(M)<\infty$, then
$\gcfd_R(M)<\infty$.
\item \label{cifdac01item3}
If $M$ is homologically finite and $\cidim_R(M)<\infty$, then
$\gkdim{C}_R(M)<\infty$.
\end{enumerate}
\end{thm}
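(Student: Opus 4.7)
Parts~(b) and~(c) will follow from~(a) by passage to the completion, so the heart of the argument is~(a).

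For part~(a), my plan is to begin by applying Theorem~\ref{thmF} to produce a quasi-deformation $R\xra{\vf}R'\xla{\tau}Q$ in which $\vf$ is flat (hence of finite flat dimension), $\tau$ is a deformation (surjective with kernel generated by a $Q$-regular sequence, so $\pd_Q(R')<\infty$), and $\fd_Q(R'\lotimes_RM)<\infty$. The flatness of $\vf$ and the last sentence of Remark~\ref{lwc01} reduce the desired conclusion $M\in\catac(R)$ to showing that $N:=R'\lotimes_RM$ lies in $\cata_{C'}(R')$, where $C':=R'\lotimes_RC$ is semidualizing for $R'$ by Remark~\ref{sdc02}. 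The next step is to transport $C'$ back along the deformation $\tau$: find a semidualizing $Q$-complex $B$ with $R'\lotimes_QB\simeq C'$. Given such a lift, the finiteness $\fd_Q(N)<\infty$ places $N$ in $\cata_B(Q)$ by Remark~\ref{lwc01}; since $\pd_Q(R')<\infty$ gives $R'\in\cata_B(Q)$, the middle clause of Remark~\ref{lwc01} rewrites this as $N\in\cata_{R'\lotimes_QB}(R')=\cata_{C'}(R')$, as desired.

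For part~(b), assume $C$ is a module. By Remark~\ref{cidim04} one has $\cifd_{\comp R}(\comp R\lotimes_RM)<\infty$; by Remark~\ref{dc01}, $\comp R$ admits a dualizing complex $D$; and by Remark~\ref{sdc04}, $\rhom_{\comp R}(\comp C,D)$ is semidualizing for $\comp R$. Applying part~(a) over $\comp R$ with this semidualizing complex yields $\comp R\lotimes_RM\in\cata_{\rhom_{\comp R}(\comp C,D)}(\comp R)$, and Proposition~\ref{sdm03} then delivers $\gcfd_R(M)<\infty$. For part~(c), if $M$ is homologically finite with $\cidim_R(M)<\infty$, then $\cifd_R(M)<\infty$ by Remark~\ref{cidim04}, and the same reduction produces $\comp R\lotimes_RM\in\cata_{\rhom_{\comp R}(\comp C,D)}(\comp R)$. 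Remark~\ref{sdm02''} translates this into $\gkdim{\comp C}_{\comp R}(\comp R\lotimes_RM)<\infty$, and Property~\ref{sri03} (taken with $\vf=\mathrm{id}_R$) descends the finiteness to $\gkdim{C}_R(M)<\infty$.

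The main obstacle is the lifting step in part~(a): producing a semidualizing $Q$-complex $B$ with $R'\lotimes_QB\simeq C'$. Without such a lift, the $Q$-level finiteness $\fd_Q(N)<\infty$ does not directly translate into $R'$-level Auslander-class membership with respect to the specific complex $C'$. Overcoming this requires invoking an ascent/descent theorem for semidualizing complexes along deformations (in the spirit of~\cite{frankild:sdcms}), or exploiting the concrete structure of the quasi-deformation produced by Lemma~\ref{qd01}, where $R'$ and $Q$ appear as compatible trivial extensions $\comp{R'}\ltimes C_0$ and $\comp{Q}\ltimes B_0$, to construct $B$ by hand.
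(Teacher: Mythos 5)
Your plan for part (a) is essentially the paper's proof, and the lifting step you flag as the main obstacle is resolved exactly as you suspect: Theorem~\ref{thmF} is used precisely to ensure that $Q$ is \emph{complete}, and then \cite[(4.2)]{frankild:sdcms} provides a semidualizing $Q$-complex $B$ with $R'\lotimes_QB\simeq R'\lotimes_RC$. Once you commit to that citation, the rest of your part~(a) argument (finite $\fd_Q$ of $N=R'\lotimes_RM$ gives $N\in\cata_B(Q)$; finite $\pd_Q(R')$ puts $R'\in\cata_B(Q)$; the middle clause of Remark~\ref{lwc01} applied to $\tau$ rewrites this as $N\in\cata_{R'\lotimes_QB}(R')=\cata_{R'\lotimes_RC}(R')$; the last clause applied to the flat map $\vf$ descends to $M\in\catac(R)$) matches the paper step for step. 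Your fallback suggestion of exploiting the trivial-extension structure is unnecessary and somewhat off-target: the quasi-deformation output by Theorem~\ref{thmF} is not handed to you as a pair of trivial extensions, only the intermediate construction inside the proof of Lemma~\ref{qd01} has that form.

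For part~(b) your reduction via Remark~\ref{cidim04}, the dualizing complex over $\comp R$, Remark~\ref{sdc04}, part~(a) over $\comp R$, and Proposition~\ref{sdm03} is exactly what the paper does. The one genuine misstep is in the last line of your proof of part~(c). Property~\ref{sri03} compares the relative $\mathrm{G}_C$-dimensions $\gcdim_\vf(X)$, $\gcdim_{\grave\vf}(\comp S\lotimes_SX)$, and $\gkdim{\comp R\lotimes_RC}_{\comp\vf}(\comp S\lotimes_SX)$ of a complex \emph{over a local homomorphism}; even taking $\vf=\mathrm{id}_R$ it never mentions the absolute quantity $\gkdim{C}_R(M)$, and the two are not identified anywhere in the properties you cite. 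What you actually need is the faithfully-flat descent statement $\gkdim{C}_R(M)<\infty \iff \gkdim{\comp R\otimes_RC}_{\comp R}(\comp R\lotimes_RM)<\infty$, which the paper obtains from \cite[(5.10)]{christensen:scatac}. Replace your appeal to Property~\ref{sri03} with that reference and part~(c) closes.
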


\begin{proof} 
\eqref{cifdac01item1}
Assume $\cifd_R(M)<\infty$, and use Theorem~\ref{thmF} to find
a quasi-deformation $R\to R'\from Q$ such that $Q$ is complete
and $\fd_Q(R'\lotimes_R M)<\infty$.  From~\cite[(4.2)]{frankild:sdcms}
there is a semidualizing $Q$-complex $B$ such that
$R'\lotimes_Q B\simeq R'\lotimes_R C$.
The finiteness of $\fd_Q(R'\lotimes_R M)$ implies 
$R'\lotimes_R M\in\catab(Q)$, 
and so
$R'\lotimes_R M\in\cata_{R'\lotimes_QB}(R')
=\cata_{R'\lotimes_RC}(R')$, and
hence $M\in\catac(R)$; see Remark~\ref{lwc01}.

\eqref{cifdac01item4} and \eqref{cifdac01item3}  
The assumption $\cifd_R(M)<\infty$ implies
$\cifd_{\comp{R}}(\comp{R}\lotimes_RM)<\infty$ by Remark~\ref{cidim04}.
If $D^{\comp{R}}$ is dualizing for $\comp R$,
then part~\eqref{cifdac01item1} implies
that $\comp{R}\lotimes_RM$ is in 
$\cata_{\rhom_{\comp{R}}(\comp{C},D^{\comp{R}})}(\comp{R})$.
When $C$ is a module, Proposition~\ref{sdm03} implies
$\gcfd_R(M)<\infty$.
When $M$ is homologically finite, Remark~\ref{sdm02''}
implies 
$\gkdim{\comp{R}\otimes_R C}(\comp{R}\lotimes_RM)<\infty$, and so
$\gcdim_R(M)<\infty$ by~\cite[(5.10)]{christensen:scatac}.  
\end{proof}

\begin{thm} \label{cifdac01'}
Let $(R,\m,k)$ be a local ring and fix 
a homologically bounded $R$-complex $M$ and a semidualizing $R$-complex $C$.  
\begin{enumerate}[\quad\rm(a)]
\item \label{cifdac01'item2}
If $\uciid_R(M)<\infty$, then
$M\in\catbc(R)$.
\item \label{cifdac01'item2'}
If $M$ is homologically finite and $\ciid_R(M)<\infty$, then
$M\in\catbc(R)$.
\item \label{cifdac01'item5}
Assume that  $R$ admits a dualizing complex $D^R$ and that $C$ is a module.
If either $\uciid_R(M)<\infty$ or 
$M$ is homologically finite and $\ciid_R(M)<\infty$, then
$\gcid_R(M)<\infty$.
\end{enumerate}
\end{thm}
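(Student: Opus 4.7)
The strategy for parts~\eqref{cifdac01'item2} and~\eqref{cifdac01'item2'} is to mirror the proof of Theorem~\ref{cifdac01}\eqref{cifdac01item1} in the injective/Bass setting, and then to deduce part~\eqref{cifdac01'item5} from the first two by changing the semidualizing complex. The main new obstacle arises in part~\eqref{cifdac01'item2'}: Proposition~\ref{qd05} requires finite-length homology while the hypothesis only provides homological finiteness, so a Koszul-complex reduction combined with Proposition~\ref{sdc06}\eqref{sdc06item4} is needed to complete the argument.

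For part~\eqref{cifdac01'item2}, the plan is to invoke Proposition~\ref{qd04} to produce a quasi-deformation $R\xra{\vf}R''\xla{\tau'}Q'$ with $Q'$ complete and $\id_{Q'}(R''\lotimes_RM)<\infty$. By~\cite[(4.2)]{frankild:sdcms} there is a semidualizing $Q'$-complex $B$ such that $R''\lotimes_{Q'}B\simeq R''\lotimes_RC$. Finiteness of injective dimension places $R''\lotimes_RM$ in $\catb_B(Q')$ via Remark~\ref{lwc01}; the transfer statements of Remark~\ref{lwc01}, applied first along $\tau'$ (which has finite flat dimension, since its kernel is generated by a regular sequence) and then along the flat map $\vf$, produce the successive memberships $R''\lotimes_RM\in\catb_{R''\lotimes_RC}(R'')$ and $M\in\catbc(R)$.

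For part~\eqref{cifdac01'item2'}, fix a system of parameters $\underline x$ for $R$ and let $K=K(\underline x;R)$ be its Koszul complex, a bounded complex of finitely generated free $R$-modules with $\HH_*(M\lotimes_RK)$ finitely generated over the artinian ring $R/(\underline x)R$, hence of finite length. Since $\pd_R(K)<\infty$, Proposition~\ref{stable01}\eqref{stable01item2} yields $\ciid_R(M\lotimes_RK)<\infty$, and then Proposition~\ref{qd05} furnishes a quasi-deformation adapted to $M\lotimes_RK$ with complete target and finite injective dimension. Repeating the argument of part~\eqref{cifdac01'item2} with $M\lotimes_RK$ in place of $M$ gives $M\lotimes_RK\in\catbc(R)$, and Proposition~\ref{sdc06}\eqref{sdc06item4}, applied with $N=K$ (which is homologically finite, nonzero, and of finite projective dimension), descends this to $M\in\catbc(R)$.

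For part~\eqref{cifdac01'item5}, set $C^\dagger=\rhom_R(C,D^R)$, which is a semidualizing $R$-complex by Remark~\ref{sdc04}. Parts~\eqref{cifdac01'item2} and~\eqref{cifdac01'item2'} apply verbatim with $C^\dagger$ in the role of $C$ and produce, under either of the two hypotheses on $M$, the membership $M\in\catb_{C^\dagger}(R)$. Because $C$ is a module, Remark~\ref{sdm02'}\eqref{sdm02'item2} identifies this membership with the finiteness of $\gcid_R(M)$, completing the argument.
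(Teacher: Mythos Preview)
Your argument is correct and follows the paper's proof essentially verbatim: part~\eqref{cifdac01'item2} mirrors Theorem~\ref{cifdac01}\eqref{cifdac01item1} via Proposition~\ref{qd04}, part~\eqref{cifdac01'item2'} reduces to finite-length homology by tensoring with a Koszul complex and then invokes Propositions~\ref{qd05} and~\ref{sdc06}\eqref{sdc06item4}, and part~\eqref{cifdac01'item5} follows from the first two applied to $\rhom_R(C,D^R)$ together with Remark~\ref{sdm02'}. The only cosmetic difference is that the paper takes $K$ to be the Koszul complex on a minimal generating set for $\m$ (so the homology of $M\lotimes_RK$ is a $k$-vector space), whereas you use a system of parameters; both choices produce finite-length homology, which is all Proposition~\ref{qd05} requires.
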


\begin{proof} 
\eqref{cifdac01'item2} This is proved like
Theorem~\ref{cifdac01}\eqref{cifdac01item1} using Proposition~\ref{qd04}.  

\eqref{cifdac01'item2'} 
Let $K$ be the Koszul complex over $R$ on a minimal generating sequence
for the maximal ideal $\m$.  The conditions
$\fd_R(K)<\infty$ and $\ciid_R(M)<\infty$ imply
$\ciid_R(M\lotimes_RK)<\infty$ by Proposition~\ref{stable01}\eqref{stable01item2}.
As the homology $\HH(M\lotimes_RK)$ is a finite dimensional
vector space over $k$, Proposition~\ref{qd05} yields a quasi-deformation
$R\to R''\from Q'$ such that $Q'$ is complete
and $\id_{Q'}(R''\lotimes_R(M\lotimes_RK))$ is finite.
Arguing as in the proof of Theorem~\ref{cifdac01}~\eqref{cifdac01item1} 
we conclude that $M\lotimes_RK$ is in $\catbc(R)$,
and so Proposition~\ref{sdc06}\eqref{sdc06item4}
implies $M\in\catbc(R)$.

\eqref{cifdac01'item5}
By parts~\eqref{cifdac01'item2} and~\eqref{cifdac01'item2'},
the assumptions  imply
that $M$ is in 
$\catb_{\rhom_{R}(C,D^{R})}(R)$, and
from Remark~\ref{sdm02'}, we conclude
$\gcid_R(M)<\infty$.
\end{proof}

\begin{questions} \label{ciidbc01}
Does the conclusion of Theorem~\ref{cifdac01'}\eqref{cifdac01'item2}
also hold if we replace the assumption
$\uciid_R(M)<\infty$ with $\ciid_R(M)<\infty$?
(c.f.~\ref{ci*id03}.)
Does the conclusion of Theorem~\ref{cifdac01'}\eqref{cifdac01'item5}
hold if we do not assume that $R$ admits a dualizing complex?
\end{questions}

\begin{para} \label{foxby01}
\emph{Proof of Theorem~\ref{thmC}.}
We start with the forward implication of~\eqref{thmCitem1}.
Assume $\cifd_R(M)<\infty$ and use Theorem~\ref{thmF}
to find a quasi-deformation $R\xra{\vf} R'\xla{\tau} Q$ such that
$Q$ is complete, $R'/\m R'$ is Gorenstein, and
$\fd_Q(R'\lotimes_R M)<\infty$. 
Note that, since $Q$ is complete, the same  is true of $R'$,
and so $R'$ has Gorenstein formal fibres.
Because $\vf$ is flat with Gorenstein closed fibre,
we know that $R'\lotimes_R D^R$ is dualizing for $R'$
by Remark~\ref{sdc02}.
As $Q$ is complete, it admits a dualizing complex $D^Q$.
Again by Remark~\ref{sdc02},
the fact that $\tau$ is surjective with kernel generated by a $Q$-regular
sequence implies that $R'\lotimes_QD^Q$ is dualizing for $R'$
and so $R'\lotimes_QD^Q\sim R'\lotimes_R D^R$ by~\cite[(V.3.1)]{hartshorne:rad}.  
After replacing $D^Q$ with $\shift^iD^Q$ for an appropriate integer $i$,
we assume without loss of generality $R'\lotimes_RD^Q\simeq R'\lotimes_R D^R$.  

Theorem~\ref{cifdac01}\eqref{cifdac01item1} implies that $M$ is in
$\cata_{D^R}(R)$,
and so $D^R\lotimes_RM$ is homologically bounded.
Because $\fd_Q(R'\lotimes_R M)$ is finite, we know that
$\id_Q(D^Q\lotimes_Q(R'\lotimes_R M))$ is finite as well
by~\cite[(4.5.F)]{avramov:hdouc}.
Hence, the following sequence of isomorphisms
$$D^Q\lotimes_Q(R'\lotimes_R M)
\simeq (D^Q\lotimes_QR')\lotimes_R M 
\simeq (R'\lotimes_R D^R)\lotimes_R M 
\simeq R'\lotimes_R (D^R\lotimes_R M)
$$
yields $\id_Q(R'\lotimes_R (D^R\lotimes_R M))<\infty$.
By definition, we have $\uciid_R(D^R\lotimes_RM)<\infty$.

We continue with the forward implication of~\eqref{thmCitem2}.
Assume $\uciid_R(M)<\infty$ and use Proposition~\ref{qd04}
to find a quasi-deformation $R\xra{\vf} R'\xla{\tau} Q$ such that
$Q$ is complete, $R'/\m R'$ is Gorenstein, and
$\id_Q(R'\lotimes_R M)<\infty$. 
As above, the ring $Q$ admits a dualizing complex $D^Q$
such that $R'\lotimes_RD^Q\simeq R'\lotimes_R D^R$.  

Theorem~\ref{cifdac01}\eqref{cifdac01'item2} implies $M\in\catb_{D^R}(R)$,
and so $\rhom_R(D^R,M)$ is homologically bounded.
As $\id_Q(R'\lotimes_R M)$ is finite, we know that
$\fd_Q(\rhom_Q(D^Q,R'\lotimes_R M))$ is finite as well
by~\cite[(4.5.I)]{avramov:hdouc}.
Hence, the following sequence of isomorphisms
\begin{align*}
\rhom_Q(D^Q,R'\lotimes_R M)
&\simeq\rhom_Q(D^Q,\rhom_{R'}(R',R'\lotimes_R M)) \\
&\simeq\rhom_{R'}(D^Q\lotimes_QR',R'\lotimes_R M) \\
&\simeq\rhom_{R'}(D^R\lotimes_R R',R'\lotimes_R M) \\
&\simeq\rhom_{R}(D^R,\rhom_{R'}(R',R'\lotimes_R M)) \\
&\simeq\rhom_{R}(D^R,R'\lotimes_R M) \\
&\simeq R'\lotimes_R \rhom_{R}(D^R,M)
\end{align*}
yields $\fd_Q(R'\lotimes_R \rhom_{R}(D^R,M))<\infty$
and so $\cifd_R(\rhom_{R}(D^R,M))<\infty$.
The first and fifth isomorphisms are Hom-cancellation;
the second and fourth isomorphisms are Hom-tensor adjointness;
the third isomorphism is from our choice of $D^Q$;
and the last isomorphism is tensor-evaluation~\eqref{basics03}.

For the reverse implication of~\eqref{thmCitem1},
assume  $\uciid_R(D^R\lotimes_RM)<\infty$.
From the forward implication of~\eqref{thmCitem2}
we conclude $\cifd(\rhom_R(D^R,D^R\lotimes_RM)<\infty$.
Theorem~\ref{cifdac01'}\eqref{cifdac01'item2} implies 
$D^R\lotimes_RM\in\catb_{D^R}(R)$,
and so there is an isomorphism
$$M\simeq \rhom(D^R,D^R\lotimes_RM).$$
Thus, we conclude $\cifd_R(M)
=\cifd(\rhom_R(D^R,D^R\lotimes_RM))<\infty$.

The proof of the reverse implication of~\eqref{thmCitem2}
is similar. \qed
\end{para}

\begin{question} \label{foxby02}
In Theorem~\ref{thmC}, can we replace $\uciid$ with $\ciid$? 
(c.f.~\ref{ci*id03}.)
\end{question}

\begin{disc} \label{foxby03}
Foxby equivalence is often described in terms of a diagram,
as in~\cite[(4.2)]{christensen:scatac}.
We show how Theorem~\ref{thmC} adds to this diagram.
Let $R$ be a local ring admitting a dualizing complex $D^R$.
Let $\catcif(R)$ and $\catucii(R)$ denote the full subcategories of $\D_{\mathrm{b}}(R)$
consisting of the complexes $M$ with,
respectively, $\cifd_R(M)<\infty$ and $\uciid_R(M)<\infty$.
Using Theorems~\ref{thmC}, \ref{cifdac01}\eqref{cifdac01item1}
and~\ref{cifdac01'}\eqref{cifdac01'item2} in conjunction with~\cite[(4.2)]{christensen:scatac},
we find that there is a commutative diagram
$$
\xymatrix{
\D(R) \ar@<1ex>[rr]^-{D^R\lotimes_R -} && \D(R) \ar@<1ex>[ll]^-{\rhom_R(D^R,-)} \\
\cata_{D^R}(R) \ar@<1ex>[rr] \ar@{^(->}[u] && \catb_{D^R}(R) \ar@<1ex>[ll] \ar@{^(->}[u] \\
\catcif(R) \ar@<1ex>[rr] \ar@{^(->}[u] && \catucii(R) \ar@<1ex>[ll] \ar@{^(->}[u] \\
\catf(R) \ar@<1ex>[rr] \ar@{^(->}[u] && \cati(R) \ar@<1ex>[ll] \ar@{^(->}[u]
}$$
where the vertical arrows are the natural full embeddings.
\end{disc}

The next result is the special case of Theorem~\ref{thmC}
wherein $R$ is dualizing for $R$.

\begin{cor} \label{foxby04}
Let $R$ be a local Gorenstein ring and $M$ a homologically bounded $R$-complex.
Then $\cifd_R(M)<\infty$ if and only if $\uciid_R(M)<\infty$.\qed
\end{cor}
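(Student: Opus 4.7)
The plan is to derive this immediately from Theorem~\ref{thmC} by exhibiting $R$ itself as a dualizing complex. First I would recall that a local ring is Gorenstein if and only if $\id_R(R) < \infty$, which together with the obvious homothety isomorphism $R \simeq \rhom_R(R,R)$ means $R$ is semidualizing with finite injective dimension. Hence $R$ is a dualizing complex for itself, so we may take $D^R = R$.

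With this choice, Theorem~\ref{thmC}\eqref{thmCitem1} reads
\[
\cifd_R(M) < \infty \iff \uciid_R(R \lotimes_R M) < \infty.
\]
Applying the tensor-cancellation isomorphism $R \lotimes_R M \simeq M$ on the right-hand side immediately gives the desired equivalence
\[
\cifd_R(M) < \infty \iff \uciid_R(M) < \infty.
\]
As a sanity check, one could equally well invoke Theorem~\ref{thmC}\eqref{thmCitem2} with $D^R = R$, since $\rhom_R(R,M) \simeq M$ likewise collapses the equivalence to the statement above.

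There is no real obstacle here: the entire content of the corollary is packaged in Theorem~\ref{thmC}, and the only thing to observe is that the Gorenstein hypothesis is precisely what makes $R$ serve as its own dualizing complex so that the functors $D^R \lotimes_R -$ and $\rhom_R(D^R,-)$ reduce to the identity on $\D_{\mathrm{b}}(R)$.
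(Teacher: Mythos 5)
Your proposal is correct and coincides with the paper's own approach: the paper states explicitly that Corollary~\ref{foxby04} is ``the special case of Theorem~\ref{thmC} wherein $R$ is dualizing for $R$,'' which is exactly the observation you make (Gorenstein $\iff$ $\id_R(R)<\infty$ $\iff$ $R$ is a dualizing $R$-complex). Filling in the tensor-cancellation $R\lotimes_R M\simeq M$ (or dually $\rhom_R(R,M)\simeq M$) is precisely the small calculation the paper leaves implicit.
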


\section{Complete Intersection Dimensions over Local Homomorphisms} \label{sec7}

We begin this section with relative versions of parts of
Theorems~\ref{cifdac01} and~\ref{cifdac01'}.

\begin{thm} \label{cifdac02}
Let $\vf\colon R\to S$ be a local ring homomorphism and fix 
a homologically finite $S$-complex $M$ and a semidualizing $R$-complex $C$.  
\begin{enumerate}[\quad\rm(a)]
\item \label{cifdac02item1}
If $\cidim_{\vf}(M)<\infty$, then
$M\in\catac(R)$ and $\gkdim{C}_{\vf}(M)<\infty$.
\item \label{cifdac02item2}
If $\ciid_{\vf}(M)<\infty$, then
$M\in\catbc(R)$.
\end{enumerate}
\end{thm}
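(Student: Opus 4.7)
The plan is to reduce both parts to the absolute versions established in Theorems~\ref{cifdac01} and~\ref{cifdac01'} by passing through a Cohen factorization and then descending the conclusion back to $R$, using the flatness of $\Dot\vf$ and the faithful flatness of $\varepsilon^{}_S$.

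For part~\eqref{cifdac02item1}, the assumption $\cidim_{\vf}(M)<\infty$ provides, by Definition~\ref{cidim07}, a Cohen factorization $R\xra{\Dot\vf}R'\xra{\vf'}\comp S$ of $\grave\vf$ such that $\cidim_{R'}(\comp S\lotimes_SM)<\infty$. Set $N:=\comp S\lotimes_SM$ and $C':=R'\lotimes_RC$. Then $N$ is a homologically finite $R'$-complex since $M$ is homologically finite over $S$ and $\vf'$ is surjective, and $C'$ is a semidualizing $R'$-complex by Remark~\ref{sdc02}, because $\Dot\vf$ is flat. Applying parts~\eqref{cifdac01item1} and~\eqref{cifdac01item3} of Theorem~\ref{cifdac01} to $N$ with the semidualizing complex $C'$ yields $N\in\cata_{C'}(R')$ and $\gkdim{C'}_{R'}(N)<\infty$. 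Definition~\ref{gcdim01} immediately converts the latter into $\gkdim{C}_{\vf}(M)<\infty$. For the membership claim, Remark~\ref{lwc01} applied to the flat local map $\Dot\vf$ gives the equivalence $N\in\cata_{C'}(R')\iff N\in\cata_C(R)$, so $N\in\cata_C(R)$.

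The remaining step is to descend from $N$ to $M$. Since $\varepsilon^{}_S\colon S\to\comp S$ is faithfully flat, homological boundedness of $C\lotimes_RM$ follows from that of $\comp S\lotimes_S(C\lotimes_RM)\simeq C\lotimes_RN$, and $\gamma^C_M$ is an isomorphism if and only if $\comp S\lotimes_S\gamma^C_M$ is. Tensor-evaluation~\ref{basics03} applies here since $C$ is homologically finite with $\sup(C\lotimes_RM)<\infty$ and $\comp S\in\catf(S)$; it identifies $\comp S\lotimes_S\gamma^C_M$ with $\gamma^C_N$, which is an isomorphism by $N\in\cata_C(R)$. Hence $M\in\cata_C(R)$.

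Part~\eqref{cifdac02item2} proceeds in parallel. A suitable Cohen factorization gives $\ciid_{R'}(N)<\infty$, and Theorem~\ref{cifdac01'}\eqref{cifdac01'item2'} (which applies precisely because $N$ is homologically finite over $R'$) yields $N\in\catb_{C'}(R')$. Remark~\ref{lwc01} converts this to $N\in\catb_C(R)$, and faithful flatness of $\varepsilon^{}_S$ together with the tensor-evaluation identification $\comp S\lotimes_S\rhom_R(C,M)\simeq\rhom_R(C,N)$, valid since $C$ is homologically finite and $\comp S\in\catf(S)$, descends both the boundedness of $\rhom_R(C,M)$ and the isomorphism property of $\xi^C_M$ from $N$ to $M$. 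The main technical point throughout is the verification that the evaluation morphisms are isomorphisms in the descent step, but the hypotheses on $C$ (semidualizing, hence homologically finite) and on $\comp S$ (flat over $S$) make this routine.
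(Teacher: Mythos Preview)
Your proof is correct and follows essentially the same route as the paper's: pass to a Cohen factorization, apply the absolute results (Theorems~\ref{cifdac01} and~\ref{cifdac01'}) over $R'$, descend to $\cata_C(R)$ (resp.\ $\catb_C(R)$) via Remark~\ref{lwc01}, and then from $\comp S\lotimes_SM$ to $M$ via faithful flatness of $\varepsilon_S$. Two minor points of comparison: for the finiteness of $\gkdim{C}_{\vf}(M)$ the paper instead invokes Theorem~\ref{cifdac01}\eqref{cifdac01item1} for the semidualizing $R'$-complex $\rhom_{R'}(R'\lotimes_RC,D^{R'})$ and then appeals to~\eqref{sri07}, whereas you cite Theorem~\ref{cifdac01}\eqref{cifdac01item3} directly---your route is a shade shorter; and for the descent from $\comp S\lotimes_SM$ to $M$ the paper simply refers to the argument of~\cite[(5.3.a)]{christensen:scatac}, while you spell out the tensor-evaluation identifications, which is fine and perhaps clearer.
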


\begin{proof} 
\eqref{cifdac02item1}
The finiteness of $\cidim_{\vf}(M)$
yields a Cohen factorization 
$R\to R'\to\comp{S}$ of the semi-completion
$\grave{\vf}\colon R\to\comp{S}$ such that
$\cidim_{R'}(\comp{S}\lotimes_S M)<\infty$.
Hence, Theorem~\ref{cifdac01}\eqref{cifdac01item1} guarantees
\begin{equation} \label{cifdac02eq1} \tag{$\dagger$}
\comp{S}\lotimes_S M\in\catab(R') \qquad \text{for each semidualizing
$S$-complex $B$.}
\end{equation}
This yields
$\comp{S}\lotimes_S M\in\cata_{R'\lotimes_R C}(R')$.
Remark~\ref{lwc01} implies
$\comp{S}\lotimes_S M\in\cata_{C}(R)$.
Arguing as in the proof of~\cite[(5.3.a)]{christensen:scatac}
we deduce $M\in\catac(R)$ and hence~\eqref{cifdac02item1}.
The ring $R'$ is complete, and so admits a dualizing complex $D$.
Again using~\eqref{cifdac02eq1}, we conclude
$\comp{S}\lotimes_S M\in\cata_{\rhom_{R'}(R'\lotimes_R C,D)}(R')$
and so $\gcdim_{\vf}(M)<\infty$ by~\eqref{sri07}.

Part~\eqref{cifdac02item2} is proved like part~\eqref{cifdac02item1},
once we note that Theorem~\ref{cifdac01'}\eqref{cifdac01'item2'} applies
because $\comp{S}\lotimes_S M$ is homologically finite over $R'$.  
\end{proof}

Theorem~\ref{thmB} 
is the special case 
$C=R$ and $M=T$ of the next result.

\begin{thm} \label{compose01}
Let $\vf\colon R \to S$ and $\psi\colon S \to T$ be local ring homomorphisms,
and let $C$ be a
semidualizing $R$-complex.  
Fix a homologically finite $T$-complex $M$.
If $\cidim_{\psi}(M)$ and
$\gcdim(\vf)$ are finite, then $\gcdim_{\psi\vf}(M)$ is finite.
\end{thm}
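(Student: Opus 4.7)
\begin{para}
\emph{Proof plan for Theorem~\ref{compose01}.}
The strategy is to recognize both hypotheses as Auslander-class membership statements and then combine them via the base-change principle of Remark~\ref{lwc01}. First I would pass to completions. Using Remark~\ref{sri03} applied to $\vf$ with the $R$-complex $S$ and to $\psi\vf$ with the $T$-complex $M$, and using Remark~\ref{cidim09} applied to $\psi$ with $M$, the finiteness assertions
$\gcdim(\vf)<\infty$,  $\cidim_\psi(M)<\infty$, and  $\gcdim_{\psi\vf}(M)<\infty$
are each equivalent to the corresponding assertion after replacing $(R,S,T,\vf,\psi,C,M)$ by $(\comp R,\comp S,\comp T,\comp\vf,\comp\psi,\comp R\lotimes_R C,\comp T\lotimes_T M)$; here I would use that $\comp{\psi\vf}=\comp\psi\comp\vf$ and that $\comp R\lotimes_R C$ is semidualizing for $\comp R$ by Remark~\ref{sdc02}. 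After this reduction we may assume $R$, $S$, $T$ are complete, so $R$ admits a dualizing complex $D^R$ by Remark~\ref{dc01}; set $C^{\dagger}:=\rhom_R(C,D^R)$.
\end{para}

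\begin{para}
Next I would re-interpret the hypothesis on $\vf$. By Remark~\ref{sri07}(v) applied to the homomorphism $\vf$ and to the homologically finite $R$-complex $S$, the finiteness of $\gcdim(\vf)=\gcdim_\vf(S)$ is equivalent to $S\in\cata_{C^{\dagger}}(R)$. The second part of Remark~\ref{lwc01} then gives both that $S\lotimes_R C^{\dagger}$ is a semidualizing $S$-complex and that, for every $S$-complex $Y$, the membership $Y\in\cata_{C^{\dagger}}(R)$ is equivalent to $Y\in\cata_{S\lotimes_R C^{\dagger}}(S)$.
\end{para}

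\begin{para}
Now I would feed the hypothesis on $\psi$ into this setup. Apply Theorem~\ref{cifdac02}(a) to the local homomorphism $\psi\colon S\to T$ with the homologically finite $T$-complex $M$ and the semidualizing $S$-complex $S\lotimes_R C^{\dagger}$: the finiteness of $\cidim_\psi(M)$ yields $M\in\cata_{S\lotimes_R C^{\dagger}}(S)$. By the equivalence established in the previous paragraph this upgrades to $M\in\cata_{C^{\dagger}}(R)$. Finally, Remark~\ref{sri07}(v) applied this time to the composition $\psi\vf\colon R\to T$ and the homologically finite $T$-complex $M$ translates $M\in\cata_{C^{\dagger}}(R)$ back into $\gcdim_{\psi\vf}(M)<\infty$, finishing the argument.
\end{para}

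\begin{para}
The main thing to double-check, and what I expect to be the only real obstacle, is the bookkeeping in the reduction to the complete case: one must verify that all three finiteness assertions pass to the completions through the single semidualizing complex $\comp R\lotimes_R C$ and that the subsequent applications of Remark~\ref{sri07}(v) to both $\vf$ and $\psi\vf$ use the same $C^{\dagger}$. Once this is in place, the proof reduces to the single conceptual step of identifying $S\lotimes_R C^{\dagger}$ as the semidualizing $S$-complex that bridges the Auslander class over $R$ with the Auslander class over $S$ furnished by Theorem~\ref{cifdac02}(a).
\end{para}
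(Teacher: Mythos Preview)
Your argument is correct, and it is more streamlined than the paper's. Both proofs begin by reducing to the complete case via Remarks~\ref{sri03} and~\ref{cidim09}, and both recognize that the crux is to pass from $S\in\cata_{C^\dagger}(R)$ and an Auslander-class membership of $M$ over $S$ to $M\in\cata_{C^\dagger}(R)$ via Remark~\ref{lwc01}. The difference is in how the Auslander-class membership of $M$ over $S$ is obtained. The paper works in two further steps: it first treats the case where $\vf$ and $\psi$ are surjective (so that $\gcdim(\vf)=\gcdim_R(S)$ and $\cidim_\psi(M)=\cidim_S(M)$, and the absolute result Theorem~\ref{cifdac01}\eqref{cifdac01item1} applies), and then reduces the general complete case to the surjective one by constructing explicit compatible Cohen factorizations of $\vf$, $\psi$, and $\psi\vf$ and tracking $\rhom_{R''}(R''\lotimes_RC,D^{R''})$ through the resulting diagram. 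You instead invoke the relative result Theorem~\ref{cifdac02}\eqref{cifdac02item1} (proved just before Theorem~\ref{compose01} and itself packaging a Cohen-factorization argument) with the semidualizing $S$-complex $S\lotimes_R C^\dagger$, which gives $M\in\cata_{S\lotimes_R C^\dagger}(S)$ in one stroke. Your route avoids the explicit diagram chase and the intermediate surjective step at the cost of appealing to a slightly stronger black box; the paper's route is more self-contained and makes the role of the Cohen factorizations of the three maps visible.
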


\begin{proof}
\textbf{Step 1.}  
Assume that $R$ is complete, the maps
$\vf$ and $\psi$ are surjective, and the
quantities $\gcdim_R(S)$ and $\cidim_S(M)$ are finite. 
We show $\gcdim_R(M)$ is also finite.
Note that $M$ is homologically finite as an $S$-complex.
Let $D^R$ be a dualizing complex for $R$.
Using~\eqref{sri07} and~\eqref{sri09}, the finiteness of $\gcdim_R(S)$ implies that
$S$ is in $\cata_{\rhom_R(C,D^R)}(R)$
and that 
$S\lotimes_R\rhom_R(C,D^R)$ is a semidualizing $S$-complex.
Using Theorem~\ref{cifdac01}\eqref{cifdac01item1},
the finiteness of $\cidim_S(M)$ implies that
$M$ is in $\cata_{S\lotimes_R\rhom_R(C,D^R)}(S)$.
From Remark~\ref{lwc01} we conclude
$M\in\cata_{\rhom_R(C,D^R)}(R)$, and so Remark~\ref{sdm02''}
implies $\gcdim_R(M)<\infty$.

\textbf{Step 2.}  
We prove the result when the rings $R$, $S$, and $T$ are complete.
Because $T$ is complete,
the finiteness of $\cidim_{\psi}(M)$ provides a Cohen factorization
$$S\xra{\dot\psi}S'\xra{\psi'}T$$ 
such that $\cidim_{S'}(M)<\infty$.
Fix a Cohen factorization 
$$R\xra{\dot\vf}R'\xra{\vf'}S$$ 
of $\vf$.
From~\cite[(1.6)]{avramov:solh} we conclude that there exists
a Cohen factorization 
$$R'\xra{\dot\rho}R''\xra{\rho'}S'$$ 
of $\dot\psi\circ\vf'$ such that $S'\cong R''\otimes_{R'}S$.
The flatness of $R''$ over $R$ implies
\begin{equation} \label{compose01eq1} \tag{$\ddagger$}
S'\cong R''\otimes_{R'}S\simeq R''\lotimes_{R'}S.
\end{equation}
These maps fit into the next commutative diagram of local ring homomorphisms
$$
  \xymatrixrowsep{2.5pc}
  \xymatrixcolsep{2.5pc}
  \xymatrix {
  && R'' \ar@{->}[dr]^{\rho'}  \ar@/^2.5pc/[ddrr]^{\psi'\rho'}  \\
  & R' \ar@{->}[dr]^{\vf'} \ar@{->}[ur]^{\dot\rho} && S'
  \ar@{->}[dr]^{\psi'}  \\
  R \ar@{->}[rr]^{\vf} \ar@/^2.5pc/[uurr]^{\dot\rho \dot\vf}
  \ar@{->}[ur]^{\dot\vf} && S
  \ar@{->}[rr]^{\psi} \ar@{->}[ur]^{\dot\psi} && T }
$$
the outer arcs of which describe a Cohen factorization of the composition
$\psi\circ\vf$.

Let $D^{R'}$ be a dualizing complex for $R'$.
As $\dot\rho$ is flat with
Gorenstein closed fibre, Remark~\ref{sdc02}
implies that $D^{R''}=R''\lotimes_{R'}D^{R'}$
is dualizing for $R''$.
The finiteness of $\gcdim(\vf)$ implies 
$$S\in\cata_{\rhom_{R'}(R'\lotimes_R C,D^{R'})}(R')$$ 
by~\eqref{sri07}.
Hence, we deduce the following membership from Remark~\ref{lwc01}
$$S'\simeq R''\lotimes_{R'}S\in\cata_{R''\lotimes_{R'}\rhom_{R'}(R'\lotimes_R C,D^{R'})}(R')$$ 
while the isomorphism is from~\eqref{compose01eq1}.
The following sequence of isomorphisms
helps us make sense of the semidualizing $R''$-complex
$R''\lotimes_{R'}\rhom_{R'}(R'\lotimes_R C,D^{R'})$:
\begin{align*}
R''\lotimes_{R'}\rhom_{R'}(R'\lotimes_R C,D^{R'})
&\simeq\rhom_{R'}(R'\lotimes_R C,R''\lotimes_{R'}D^{R'}) \\
&\simeq\rhom_{R'}(R'\lotimes_R C,D^{R''}) \\
&\simeq\rhom_{R'}(R'\lotimes_R C,\rhom_{R''}(R'',D^{R''})) \\
&\simeq\rhom_{R''}(R''\lotimes_{R'}R'\lotimes_R C,D^{R''}) \\
&\simeq\rhom_{R''}(R''\lotimes_R C,D^{R''}).
\end{align*}
The first isomorphism is by tensor-evaluation~\eqref{basics03};
the second one is from the definition $D^{R''}=R''\lotimes_{R'}D^{R'}$;
the third one is Hom-cancellation;
the fourth one is Hom-tensor adjointness;
the fifth  one is tensor-cancellation.
This sequence, with the previous display, provides
$$S'\in\cata_{\rhom_{R''}(R''\lotimes_R C,D^{R''})}(R')$$ 
and we conclude 
$\gkdim{R''\lotimes_R C}_{R''}(S')<\infty$ by Remark~\ref{sdm02''}.
Using the condition $\cidim_{S'}(M)<\infty$, Step 1 implies
$\gkdim{R''\lotimes_R C}_{R''}(M)<\infty$.
Because the diagram 
$$R\xra{\dot\rho\circ\dot\vf}R''\xra{\psi'\circ\rho'}T$$
is a Cohen factorization of $\psi \circ\vf$, this implies
$\gcdim_{\psi \circ\vf}(M)<\infty$, as desired.

\textbf{Step 3.}  
We prove the result in general.
The conditions 
$\cidim_{\psi}(M)<\infty$ and
$\gcdim(\vf)<\infty$ imply
$\cidim_{\comp\psi}(\comp{T}\lotimes_TM)<\infty$ and
$\gkdim{\comp{R}\lotimes_R C}(\comp\vf)<\infty$ by~\eqref{sri03} and~\eqref{cidim09}.  
From Step 2, we conclude
$$\gkdim{\comp{R}\lotimes_R C}_{\comp{\psi \circ\vf}}(\comp{T}\lotimes_TM)
=\gkdim{\comp{R}\lotimes_R C}_{\comp\psi \circ\comp\vf}(\comp{T}\lotimes_TM)<\infty$$
and so $\gcdim_{\psi \circ\vf}(M)<\infty$ by~\eqref{sri03}.
\end{proof}

\begin{disc} \label{compose03}
Iyengar and Sather-Wagstaff~\cite[(5.2)]{iyengar:golh} prove the following
decomposition result:  
If $\vf\colon R\to S$ and $\psi\colon S\to T$ are local ring homomorphisms
such that $\psi\circ\vf$ has finite Gorenstein dimension and $\psi$ has finite flat 
dimension, then  $\vf$ has finite Gorenstein dimension.
We do not know whether the conclusion in this result 
holds if we only assume that $\psi$ has finite complete intersection 
dimension.
\end{disc}

\begin{para} \label{ciid01}
\emph{Proof of Theorem~\ref{thmD}.}
For the forward implication, assume $\ciid(\vf)<\infty$.
Consider a Cohen factorization 
$R\xra{\dot\vf} R'\xra{\vf'}\comp S$ of the semi-completion $\grave\vf$
such that
$\ciid_{R'}(\comp S)<\infty$.  This provides
a quasi-deformation
$R'\xra{\sigma}R''\xla{\tau}Q$
such that $\id_Q(R''\otimes_{R'}\comp{S})<\infty$.
The map $\vf'\colon R'\to\comp S$ is a surjective local ring homomorphism,
so the same is true of the base-changed map
$R''\otimes_{R'}\vf'\colon R''\to R''\otimes_{R'}\comp{S}$.
Thus, the composition $(R''\otimes_{R'}\vf')\circ\tau\colon Q\to R''\otimes_{R'}\comp{S}$
is a surjective local ring homomorphism of finite injective dimension.
Using a result of Peskine and Szpiro~\cite[(II.5.5)]{peskine:dpfcl}, we conclude
that $Q$ is Gorenstein.  
Because  $\tau$ is surjective with kernel
generated by a $Q$-regular sequence, this implies that $R''$
is Gorenstein.  Hence,  the flatness of the composition
$\sigma\circ\vf'\colon R\to R''$ implies that $R$ is Gorenstein.
Because $Q$ is Gorenstein, a result of Levin and Vasconcelos~\cite[(2.2)]{levin:hdmr}
says that the finiteness of $\id_Q(R''\otimes_{R'}\comp{S})$
implies $\pd_Q(R''\otimes_{R'}\comp{S})<\infty$.
By definition, we conclude $\cidim(\vf)<\infty$.

For the converse, assume that $R$ is Gorenstein
and $\cifd(\vf)$ is finite.
Fix a Cohen factorization 
$R\xra{\dot\vf} R'\xra{\vf'}\comp S$ of  $\grave\vf$
such that
$\cifd_{R'}(\comp S)<\infty$.  Since $R$ is Gorenstein and $\dot\vf$
is flat with regular closed fibre, we know that $R'$ is Gorenstein.
Remark~\ref{ci*id02} and
Corollary~\ref{foxby04} implies $\ciid_{R'}(\comp S)<\infty$,
and so $\ciid(\vf)<\infty$.
\qed
\end{para}

Theorem~\ref{thmE} from the introduction is contained in Corollary~\ref{ciid03},
which we prove after a definition and a remark.

\begin{defn} \label{contrn01}
A local ring endomorphism $\vf\colon R\to R$ is a 
\emph{contraction} if, for each element $x\in\m$, the sequence
$\{\vf^n(x)\}$ converges to $0$ in the $\m$-adic topology.
\end{defn}

\begin{disc} \label{contrn02}
If $R$ is a local ring of prime characteristic, then 
the Frobenius endomorphism $\vf\colon R\to R$ is a 
contraction.
\end{disc}

\begin{cor} \label{ciid03}
Given a contraction $\vf\colon R\to R$, the 
following are equivalent.
\begin{enumerate}[\quad\rm(i)]
\item \label{ciid03item1}
$R$ is a complete intersection.
\item \label{ciid03item2}
$\ciid(\vf^i)$ is finite for some integer $i\geq 1$.
\item \label{ciid03item3}
$\ciid(\vf^i)$ is finite for each integer $i\geq 1$.
\end{enumerate}
\end{cor}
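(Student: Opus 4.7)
The plan is to reduce the injective statement to its projective counterpart via Theorem~\ref{thmD} and then invoke the classical characterization of complete intersections in terms of the complete intersection dimension of a power of a contraction.

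First I would handle (i)$\Rightarrow$(iii): if $R$ is a complete intersection, then in any Cohen factorization $R \xra{\dot\psi} R' \xra{\psi'} \comp R$ of the semi-completion of $\psi = \vf^i$, the map $\dot\psi$ is flat with regular closed fibre, so $R'$ is again a complete intersection. Remark~\ref{cidim12} then yields $\ciid_{R'}(\comp R) < \infty$, that is, $\ciid(\vf^i) < \infty$; this is precisely the special case of~\eqref{cidim08} with $M = S = R$. The implication (iii)$\Rightarrow$(ii) is trivial.

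The main content is (ii)$\Rightarrow$(i). Assuming $\ciid(\vf^i) < \infty$ for some $i \geq 1$, Theorem~\ref{thmD} (proved in~\ref{ciid01}) forces $R$ to be Gorenstein and $\cidim(\vf^i) < \infty$. From this I would invoke the result---the analogue for contractions of the Avramov--Miller theorem on Frobenius---that for a contraction endomorphism $\vf \colon R \to R$, the finiteness of $\cidim(\vf^i)$ for some $i \geq 1$ already implies that $R$ is a complete intersection. Combined with the previous step, this closes the circle of implications.

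The main obstacle is the final appeal to the contraction-version of the Avramov--Miller theorem; everything else is a formal chain. The role of Theorem~\ref{thmD} in this corollary is precisely to convert the injective hypothesis $\ciid(\vf^i) < \infty$ into the projective hypothesis $\cidim(\vf^i) < \infty$, at which point the existing results on contractions of finite complete intersection dimension take over and deliver the complete intersection property of $R$.
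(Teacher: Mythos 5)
Your proof is correct and follows essentially the same route as the paper's. The paper's argument for (ii)$\Rightarrow$(i) is exactly your chain: Theorem~\ref{thmD} converts $\ciid(\vf^i)<\infty$ into $\cidim(\vf^i)<\infty$, and then the reference you describe as the contraction-analogue of the Avramov--Miller Frobenius theorem is precisely~\cite[(13.5)]{avramov:holh}, which delivers the complete intersection conclusion; the (i)$\Rightarrow$(iii) step via~\eqref{cidim08} and the trivial (iii)$\Rightarrow$(ii) also match.
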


\begin{proof}
The implication \eqref{ciid03item1}$\implies$\eqref{ciid03item3}
follows from~\eqref{cidim08}, and the implication 
\eqref{ciid03item3}$\implies$\eqref{ciid03item2} is trivial.
For \eqref{ciid03item2}$\implies$\eqref{ciid03item1}, apply
Theorem~\ref{thmD} to conclude that 
$\cidim(\vf^i)$ is finite and then~\cite[(13.5)]{avramov:holh}
implies that $R$ is a complete intersection.
\end{proof}

\begin{cor} \label{ciid04}
Let $\vf\colon R\to S$ and $\psi\colon S\to T$ be local ring homomorphisms,
and let $C$ be a semidualizing $R$-complex.
\begin{enumerate}[\quad\rm(a)]
\item \label{ciid04item1}
If $\vf$ has finite $\text{G}_C$-dimension and $\psi$ has finite complete intersection
injective dimension, then $C$ is dualizing.
\item \label{ciid04item2}
If $\vf$ has finite Gorenstein dimension and $\psi$ has finite complete intersection
injective dimension, then $R$ is Gorenstein.
\end{enumerate}
\end{cor}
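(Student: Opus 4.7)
The plan is to derive part~\eqref{ciid04item2} from part~\eqref{ciid04item1} by specialising to $C=R$: the quantity $\gkdim{R}(\vf)$ coincides with $\gdim(\vf)$, and $R$ is dualizing for itself precisely when $R$ is Gorenstein, so the case $C=R$ of part~\eqref{ciid04item1} yields part~\eqref{ciid04item2}. Thus I focus on part~\eqref{ciid04item1}.

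First, I would apply Theorem~\ref{thmD} to $\psi$: the finiteness of $\ciid(\psi)$ forces $S$ (and hence $\comp{S}$) to be Gorenstein. Next, fix a Cohen factorization $R\xra{\dot\vf}R'\xra{\vf'}\comp{S}$ of the semi-completion $\grave\vf$. The finiteness of $\gcdim(\vf)$ gives $\gkdim{R'\lotimes_R C}_{R'}(\comp{S})<\infty$, and the $R'$-complex $B:=R'\lotimes_R C$ is semidualizing for $R'$ by Remark~\ref{sdc02}, since $\dot\vf$ is flat with regular (hence Gorenstein) closed fibre.

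Because $R'$ is complete, it admits a dualizing complex $D^{R'}$. Setting $E=\rhom_{R'}(B,D^{R'})$, the characterization~\eqref{sri09} applied to $\vf'$ yields that $\comp{S}\lotimes_{R'}E$ is semidualizing for $\comp{S}$. Since $\comp{S}$ is Gorenstein, the classical rigidity result of Christensen~\cite{christensen:scatac} --- every semidualizing complex over a Gorenstein local ring is isomorphic to a shift of the ring --- forces $\comp{S}\lotimes_{R'}E\sim\comp{S}$. Tensoring with the common residue field $k'$ of $R'$ and $\comp{S}$ produces $k'\lotimes_{R'}E\sim k'$; since $E$ is semidualizing (hence homologically finite), the vanishing $\tor_i^{R'}(k',E)=0$ for $i\neq n$ yields $\pd_{R'}(E)<\infty$, and a Betti number count then forces the minimal free resolution of $E$ to have a single non-zero term of rank one, so $E\sim R'$. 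Consequently $B=R'\lotimes_R C$ is dualizing for $R'$, and a final application of Remark~\ref{sdc02} to the flat Gorenstein map $\dot\vf$ promotes this to the statement that $C$ is dualizing for $R$.

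The main obstacle is the appeal to Christensen's rigidity theorem: it is crucial that a semidualizing complex over the Gorenstein ring $\comp{S}$ must be a shift of $\comp{S}$, for otherwise I could not identify $\comp{S}\lotimes_{R'}E$ with a shift of $\comp{S}$. Once that identification is in place, the Nakayama-type descent to the conclusion $E\sim R'$ over the local ring $R'$ is routine, as is the transfer between $R'$ and $R$ via the flat Gorenstein map $\dot\vf$.
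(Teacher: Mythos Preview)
Your proof is correct and follows essentially the same route as the paper's: both invoke Theorem~\ref{thmD} to make $\comp S$ Gorenstein, use Christensen's rigidity result~\cite[(8.6)]{christensen:scatac} to force the base-changed dagger-dual of $C$ to be a shift of $\comp S$, then descend via a Betti-number/Poincar\'e-series count and apply dagger-duality together with Remark~\ref{sdc02}. The only cosmetic difference is that the paper works over $\comp R$ (with dualizing complex $D^{\comp R}$) while you work over the Cohen-factorization ring $R'$ (with $D^{R'}$); since $\vf'$ is surjective you get the shared residue field for free, whereas the paper's Poincar\'e-series remark implicitly uses that Betti numbers over $\comp R$ can be read off after tensoring with the residue field of $\comp S$.
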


\begin{proof}
\eqref{ciid04item1}
The finiteness of $\ciid(\psi)$ implies that $S$ is Gorenstein
by Theorem~\ref{thmD}, and so $\comp{S}$ is Gorenstein.
Let $D$ be a dualizing $\comp{R}$-complex.
Because $\gcdim(\vf)$ is finite
we know that
$\comp S$ is in 
$\cata_{\rhom_{\comp{R}}(\comp{R}\lotimes_R C,D)}(\comp{R})$
by~\eqref{sri07},
and so~\eqref{sri09} implies that the
$\comp S$-complex 
$\comp{S}\lotimes_{\comp R}\rhom_{\comp{R}}(\comp{R}\lotimes_R C,D)$
is semidualizing.
Because $\comp{S}$ is Gorenstein, we have
the following isomorphism from~\cite[(8.6)]{christensen:scatac}.
$$\comp S \sim
\comp{S}\lotimes_{\comp R}\rhom_{\comp{R}}(\comp{R}\lotimes_R C,D)
$$
One concludes readily (using Poincar\'e series, say) that there is an isomorphism
$$\comp R \sim
\rhom_{\comp{R}}(\comp{R}\lotimes_R C,D).$$
Apply the functor $\rhom_{\comp{R}}(-,D)$ to this isomorphism to justify
the second isomorphism in the next sequence:
$$D\simeq\rhom_{\comp{R}}(\comp R,D) \sim
\rhom_{\comp{R}}(\rhom_{\comp{R}}(\comp{R}\lotimes_R C,D),D)
\simeq \comp{R}\lotimes_R C.$$
The first isomorphism is Hom-cancellation, and the third one is 
from Remark~\ref{sdc04}.
Now apply Remark~\ref{sdc02} to conclude that
$C$ is dualizing for $R$, and so~\eqref{sri10}
implies that the composition
$\psi\circ\vf$ has finite $\text{G}_C$-dimension.  

\eqref{ciid04item2}
Apply part~\eqref{ciid04item1} in the case $C=R$.
\end{proof}

\section*{Acknowledgments}

The author is grateful to Lars W.\ Christensen,
Anders J.\ Frankild and Diana White for thoughtful discussions 
about this research and to 
Parviz Sahandi, Tirdad Sharif and Siamak Yassemi for sharing
a preliminary version of~\cite{sahandi:hfd}.
The author also thanks the anonymous referee for thoughtful comments.


\begin{thebibliography}{10}

\bibitem{auslander:smt}
M.\ Auslander and M.\ Bridger, \emph{Stable module theory}, Memoirs of the
  American Mathematical Society, No. 94, American Mathematical Society,
  Providence, R.I., 1969. \MR{42 \#4580}

\bibitem{auslander:lawlom}
M.\ Auslander, S.\ Ding, and \O.\ Solberg, \emph{Liftings and weak liftings of
  modules}, J. Algebra \textbf{156} (1993), 273--397. \MR{94d:16007}

\bibitem{avramov:hdouc}
L.~L. Avramov and H.-B.\ Foxby, \emph{Homological dimensions of unbounded
  complexes}, J. Pure Appl. Algebra \textbf{71} (1991), 129--155.
  \MR{93g:18017}

\bibitem{avramov:lgh}
\bysame, \emph{Locally {G}orenstein homomorphisms}, Amer. J. Math. \textbf{114}
  (1992), no.~5, 1007--1047. \MR{1183530 (93i:13019)}

\bibitem{avramov:glp}
\bysame, \emph{Grothendieck's localization problem}, Commutative algebra:
  syzygies, multiplicities, and birational algebra (South Hadley, MA, 1992),
  Contemp. Math., vol. 159, Amer. Math. Soc., Providence, RI, 1994, pp.~1--13.
  \MR{1266174 (94m:13011)}

\bibitem{avramov:rhafgd}
\bysame, \emph{Ring homomorphisms and finite {G}orenstein dimension}, Proc.
  London Math. Soc. (3) \textbf{75} (1997), no.~2, 241--270. \MR{98d:13014}

\bibitem{avramov:dgha}
L.~L. Avramov, H.-B.\ Foxby, and S.\ Halperin, \emph{Differential graded
  homological algebra}, in preparation.

\bibitem{avramov:solh}
L.~L. Avramov, H.-B.\ Foxby, and B.\ Herzog, \emph{Structure of local
  homomorphisms}, J. Algebra \textbf{164} (1994), 124--145. \MR{95f:13029}

\bibitem{avramov:cid}
L.~L. Avramov, V.~N. Gasharov, and I.~V. Peeva, \emph{Complete intersection
  dimension}, Inst. Hautes \'Etudes Sci. Publ. Math. (1997), no.~86, 67--114
  (1998). \MR{1608565 (99c:13033)}

\bibitem{avramov:holh}
L.~L. Avramov, S.\ Iyengar, and C.\ Miller, \emph{Homology over local
  homomorphisms}, Amer. J. Math. \textbf{128} (2006), no.~1, 23--90.
  \MR{2197067}

\bibitem{avramov:aratc}
L.~L. Avramov and A.\ Martsinkovsky, \emph{Absolute, relative, and {T}ate
  cohomology of modules of finite {G}orenstein dimension}, Proc. London Math.
  Soc. (3) \textbf{85} (2002), 393--440. \MR{2003g:16009}

\bibitem{blanco:miccp}
A.\ Blanco and J.\ Majadas, \emph{Sur les morphismes d'intersection compl\`ete
  en caract\'eristique {$p$}}, J. Algebra \textbf{208} (1998), no.~1, 35--42.
  \MR{1643971 (99i:13023)}

\bibitem{christensen:scatac}
L.~W. Christensen, \emph{Semi-dualizing complexes and their {A}uslander
  categories}, Trans. Amer. Math. Soc. \textbf{353} (2001), no.~5, 1839--1883.
  \MR{2002a:13017}

\bibitem{christensen:ogpifd}
L.~W. Christensen, A.\ Frankild, and H.\ Holm, \emph{On {G}orenstein
  projective, injective and flat dimensions---a functorial description with
  applications}, J. Algebra \textbf{302} (2006), no.~1, 231--279. \MR{2236602}

\bibitem{christensen:apac}
L.~W. Christensen and H.\ Holm, \emph{Ascent properties of {A}uslander
  categories}, Canad. J. Math., to appear, \texttt{arXiv:math.AC/0509570}.

\bibitem{enochs:gipm}
E.~E. Enochs and O.~M.~G. Jenda, \emph{Gorenstein injective and projective
  modules}, Math. Z. \textbf{220} (1995), no.~4, 611--633. \MR{1363858
  (97c:16011)}

\bibitem{enochs:gf}
E.~E. Enochs, O.~M.~G. Jenda, and B.~Torrecillas, \emph{Gorenstein flat
  modules}, Nanjing Daxue Xuebao Shuxue Bannian Kan \textbf{10} (1993), no.~1,
  1--9. \MR{95a:16004}

\bibitem{esmkhani:ghdac}
M.~A. Esmkhani and M.\ Tousi, \emph{Gorenstein homological dimensions and
  {A}uslander categories}, J. Algebra \textbf{308} (2007), no.~1, 321--329.
  \MR{2290924}

\bibitem{foxby:imufbc}
H.-B.\ Foxby, \emph{Injective modules under flat base change}, Proc. Amer.
  Math. Soc. \textbf{50} (1975), 23--27. \MR{0409439 (53 \#13194)}

\bibitem{foxby:cmfgidgr}
H.-B.\ Foxby and A.~J. Frankild, \emph{Cyclic modules of finite {G}orenstein
  injective dimension and {G}orenstein rings}, Illinois J. Math. \textbf{51} (2007) no.~1,
  67--82. \MR{2346187}

\bibitem{foxby:daafuc}
H.-B.\ Foxby and S.\ Iyengar, \emph{Depth and amplitude for unbounded
  complexes}, Commutative algebra. Interactions with Algebraic Geometry,
  Contemp. Math., vol. 331, Amer. Math. Soc., Providence, RI, 2003,
  pp.~119--137. \MR{2 013 162}

\bibitem{frankild:qcmpolh}
A.\ Frankild, \emph{Quasi {C}ohen-{M}acaulay properties of local
  homomorphisms}, J. Algebra \textbf{235} (2001), 214--242. \MR{2001j:13023}

\bibitem{frankild:rrhffd}
A.\ Frankild and S.\ Sather-Wagstaff, \emph{Reflexivity and ring homomorphisms
  of finite flat dimension}, Comm. Algebra \textbf{35} (2007), no.~2, 461--500.
  \MR{2294611}

\bibitem{frankild:sdcms}
\bysame, \emph{The set of semidualizing complexes is a nontrivial metric
  space}, J. Algebra \textbf{308} (2007), no.~1, 124--143. \MR{2290914}

\bibitem{gelfand:moha}
S.~I. Gelfand and Y.~I. Manin, \emph{Methods of homological algebra},
  Springer-Verlag, Berlin, 1996. \MR{2003m:18001}

\bibitem{hartshorne:rad}
R.\ Hartshorne, \emph{Residues and duality}, Lecture Notes in Mathematics, No.
  20, Springer-Verlag, Berlin, 1966. \MR{36 \#5145}

\bibitem{holm:smarghd}
H.\ Holm and P.\ J{\o}rgensen, \emph{Semi-dualizing modules and related
  {G}orenstein homological dimensions}, J. Pure Appl. Algebra \textbf{205}
  (2006), no.~2, 423--445. \MR{2203625}

\bibitem{iyengar:golh}
S.\ Iyengar and S.\ Sather-Wagstaff, \emph{G-dimension over local
  homomorphisms. {A}pplications to the {F}robenius endomorphism}, Illinois J.
  Math. \textbf{48} (2004), no.~1, 241--272. \MR{2048224 (2005c:13016)}

\bibitem{jensen:vl}
C.~U. Jensen, \emph{On the vanishing of
  {$\underset{\longleftarrow}{\lim}^{(i)}$}}, J. Algebra \textbf{15} (1970),
  151--166. \MR{0260839 (41 \#5460)}

\bibitem{levin:hdmr}
G.\ Levin and W.~V. Vasconcelos, \emph{Homological dimensions and {M}acaulay
  rings}, Pacific J. Math. \textbf{25} (1968), 315--323. \MR{0230715 (37
  \#6275)}

\bibitem{peskine:dpfcl}
C.\ Peskine and L.\ Szpiro, \emph{Dimension projective finie et cohomologie
  locale. {A}pplications \`a la d\'emonstration de conjectures de {M}.
  {A}uslander, {H}. {B}ass et {A}. {G}rothendieck}, Inst. Hautes \'Etudes Sci.
  Publ. Math. (1973), no.~42, 47--119. \MR{0374130 (51 \#10330)}

\bibitem{raynaud:cpptpm}
M.\ Raynaud and L.\ Gruson, \emph{Crit\`eres de platitude et de projectivit\'e.
  {T}echniques de ``platification'' d'un module}, Invent. Math. \textbf{13}
  (1971), 1--89. \MR{0308104 (46 \#7219)}

\bibitem{sahandi:hfd}
P.\ Sahandi, T.\ Sharif, and S.\ Yassemi, \emph{Homological flat dimensions},
  preprint (2007).

\bibitem{sather:cidc}
S.\ Sather-Wagstaff, \emph{Complete intersection dimensions for complexes}, J.
  Pure Appl. Algebra \textbf{190} (2004), no.~1-3, 267--290. \MR{2043332
  (2005i:13022)}

\bibitem{sather:crct}
S.\ Sather-Wagstaff, T.\ Sharif, and D.\ White, \emph{Comparison of relative
  cohomology theories with respect to semidualizing modules}, preprint (2007),
  \texttt{arXiv:math.AC/0706.3635}.

\bibitem{yassemi:gd}
S.\ Yassemi, \emph{G-dimension}, Math. Scand. \textbf{77} (1995), no.~2,
  161--174. \MR{97d:13017}

\end{thebibliography}

\providecommand{\bysame}{\leavevmode\hbox to3em{\hrulefill}\thinspace}
\providecommand{\MR}{\relax\ifhmode\unskip\space\fi MR }
\providecommand{\MRhref}[2]{%
  \href{http://www.ams.org/mathscinet-getitem?mr=#1}{#2}
}
\providecommand{\href}[2]{#2}

\end{document}